\newcommand\sbullet[1][.5]{\mathbin{\vcenter{\hbox{\scalebox{#1}{$\bullet$}}}}}
\numberwithin{equation}{section}
\newcommand{\R}{\mathbb R}
\def\P{\mathbb P}
\def\XXint#1#2#3{{\setbox0=\hbox{$#1{#2#3}{\int}$}
\vcenter{\hbox{$#2#3$}}\kern-.5\wd0}}
\numberwithin{equation}{section}
\newtheorem{thm}{Theorem}[section]
\newtheorem{lem}[thm]{Lemma}
\newtheorem{cor}[thm]{Corollary}
\newtheorem{prop}[thm]{Proposition}
\theoremstyle{definition}
\newtheorem{defn}[thm]{Definition}
\newtheorem{rmk}[thm]{Remark}
\def\smallnegint{\mathop{\int\mkern-13mu
        \raise.5ex\hbox{${\scriptscriptstyle\diagup}$}}\nolimits}
\def\ep{\varepsilon}
\def\F{{\mathcal F}}
\def\L{{\mathcal{L}}}
\def\tr{\operatorname{tr}}
\def\ssetminus{\,\raise.4ex\hbox{$\scriptstyle\setminus$}\,}
\newcommand{\be}{\begin{equation}}
\newcommand{\ee}{\end{equation}}
\newcommand{\bc}{\begin{case}}
\newcommand{\ec}{\end{cases}}
\newcommand{\bs}{\begin{split}}
\newcommand{\es}{\end{split}}
\newcommand{\Prob}{\mathcal{P}_2(\mathbb{R}^d)}
\newcommand{\Joint}{\mathcal{P}_2(\mathbb{R}^d\times\mathbb{R}^d)}
\renewcommand{\bar}{\overline}
\renewcommand{\tilde}{\widetilde}
\renewcommand{\hat}{\widehat}
\def\dw{{\bf d}_2}
\begin{document}
\title[Homogenization of conditional slow-fast McKean-Vlasov SDEs]{Homogenization of conditional slow-fast McKean-Vlasov SDEs}
\author[Antonios Zitridis]{Antonios Zitridis}
\address{Department of Mathematics, University of Chicago, Chicago, Illinois 60637, USA}
\email{zitridisa@uchicago.edu}
\vskip-0.5in 
%\thanks{\hskip-0.149in The author was partially supported by Panagiots E. Souganidis'  National Science Foundation DMS-1266383 and DMS-1600129, Office for Naval Research N000141712095 and Air Force Office for Scientific Research FA9550-18-1-0494 grants.}

%\dedicatory{Version: \today}

\begin{abstract}
We study a fully-coupled system of conditional slow-fast McKean-Vlasov Stochastic Differential Equations that exhibit full dependence on both the slow and fast components, as well as on the conditional law of the slow component. Our aim is to derive convergence rates to its homogenized limit, without making periodicity assumptions. To prove our results, we utilize a perturbation method for equations posed in the Wasserstein space.

%We consider a  fully-coupled slow-fast system of McKean-Vlasov SDEs with full dependence on the slow and fast component and on the law of the slow component and derive convergence rates to its homogenized limit. We do not make periodicity assumptions, but we impose conditions on the fast motion to guarantee ergodicity. Our proof method uses the notion of the Wasserstein derivative.

\vspace{1mm}

\noindent \textbf{Keywords.}
multiscale processes, conditional McKean-Vlasov process, averaging, homogenization, Wasserstein space, Poisson equation

\end{abstract}

\maketitle
\vspace{-10mm}

%\tableofcontents
%\section{Useful Bibliography (for convenience while working on the problem)}
%\begin{enumerate}
   % \item \href{https://arxiv.org/pdf/1903.05549.pdf}{Probabilistic approach to what Bardi did}
   % \item \href{https://arxiv.org/pdf/2212.11053.pdf}{Soner, viscosity solutions}
    %\item \href{https://arxiv.org/pdf/2107.10535.pdf}{Pham, viscosity solutions}
%\end{enumerate}

\section{Introduction}
Let $(\Omega^1,\F^1,\mathbb{P}^1,\{\F^1_t\})$, $(\Omega^0,\F^0,\mathbb{P}^0,\{\F^0_t\})$ be filtered probability spaces with $\{\F^1_t\},\; \{ \F^0_t\}$, $B_t,\; W_t$ independent standard $m$-dimensional $\{\F^1_t\}$-Brownian motions and $B_t^0$ an $m$-dimensional $\{\F^0_t\}$ Brownian motion. For any parameter $\lambda\in \R$ small enough, we study the system of slow-fast McKean-Vlasov stochastic differential equations (SDEs for short)
\begin{align}\label{eq:slow-fastMcKeanVlasov}
dX^{\ep}_t &= \biggl(\frac{\lambda}{\ep}f(X^{\ep}_t,Y^{\ep}_t,\L(X_t^{\ep}|\F_t^0))+ c(X^{\ep}_t,Y^{\ep}_t,\L(X_t^{\ep}|\F_t^0)) \biggr)dt + \sigma(X^{\ep}_t,Y^{\ep}_t,\L(X_t^{\ep}|\F_t^0))dW_t\nonumber\\
 &\hspace{6cm}+\sigma^0_1(X_t^{\ep},\L(X_t^{\ep}|\F_t^0))dB_t^0\nonumber \\
dY^{\ep}_t & = \frac{1}{\ep}\biggl(\frac{1}{\ep}f(X^{\ep}_t,Y^{\ep}_t,\L(X_t^{\ep}|\F_t^0))+ g(X^{\ep}_t,Y^{\ep}_t,\L(X_t^{\ep}|\F_t^0)) \biggr)dt \\
&+\frac{1}{\ep}\biggl(\tau_1(X^{\ep}_t,Y^{\ep}_s,\L(X_t^{\ep}|\F_t^0))dW_t+\tau_2(X^{\ep}_t,Y^{\ep}_t,\L(X_t^{\ep}|\F_t^0))dB_t+ \sigma_2^0(X_t^{\ep},\L(X_t^{\ep}|\F_t^0))dB_t^0 \biggr),\nonumber
\end{align}
where $\L(X_t^{\ep}|\F_t^0)$ is the conditional law of $X_t^{\ep}$ and where $b,c,f,g:\R^d\times\R^d\times\Prob\rightarrow \R^d$, $\sigma,\tau_1,\tau_2:\R^d\times\R^d\times\Prob\rightarrow \R^{d\times m}$, $\sigma^0_1,\sigma_2^0:\R^d\times\Prob\rightarrow \R^{d\times m}$ $\eta\in L^2(\Omega^1,\F^1,\mathbb{P}^1;\R^d)$ with $X_0=\eta\sim \mu\in \Prob$, $Y_0=\zeta\in L^p(\Omega^1,\F^1,\mathbb{P}^1;\R^d)$ for all $p>0$, and $(\eta,\zeta)$ is independent of $(W,B)$. Note that this also means that $(\eta,\zeta)\sim m\in \Joint$ such that the $y$-marginal of $m$ has finite $p-$moments for every $p>0$ (we write $m\in\mathcal{A}$ for that). Here $\mathcal{P}_2(\R^n)$, $n\geq0$, denotes the space of probability measures over $\R^n$ with finite second moment, equipped with the 2-Wasserstein metric.\\

The theory of averaging for diffusion process has a long history broad applications in various fields, such as material sciences, chemistry, biology, and climate dynamics, among others (see e.g \cite{ran1, ran2, ran3, ran4}). Existing results of averaging for SDEs with coefficients that do not depend on the law of the solution are numerous two of which are by Pardoux and Veretenikov \cite{Pardoux,Pardoux2} and R\"ockner and Xie \cite{Rockner1}. So far in the literature, for slow-fast McKean-Vlasov SDEs (when the coefficients depend also on the law of the solution as in our case), the problem has been studied, with different methods, by Spiliopoulos, Bezemek \cite{Spil}, Hong, Wei, Sun \cite{Hong1}, Li, Wu, Xie \cite{Li}, Qiao, Wei \cite{Qiao} and R\"ockner, Sun, Xie  \cite{Rockner2}.\\
The aim of this paper is to describe the behavior of the slow component $X_t^{\ep}$ of \eqref{eq:slow-fastMcKeanVlasov} as $\ep\rightarrow 0$, in presence of the common noise $B_t^0$ this time, and establish that it converges to an averaged process, which we will find.\\

Let $a(x,y,\mu)=\frac{1}{2}[\tau_1\tau_1^\top+\tau_2\tau_2^\top+\sigma^0_2\sigma^{0\top}_2](x,y,\mu)$. For fixed $x\in\R^d$ and $\mu\in\Prob$, we consider the operator $L_0^{x,\mu}$ acting on $\phi\in C^2_b(\R^d)$ (that is $\phi$ has derivatives up to order two which are bounded) with
\be\label{L0 real}
L_0^{x,\mu}\phi =f(x,y,\mu)\cdot D\phi +a(x,y,\mu):D^2\phi(y).
\ee
Under our assumptions (see the corresponding section), by Proposition 1 from \cite{Pardoux}, there exists a $\pi(\cdot ; x,\mu)$ which is the unique probability measure solving the distributional equation 
\be\label{invariant}
\bigg(L_0^{x,\mu}\bigg)^*\pi =0.
\ee

Moreover, all moments of $\pi$ are bounded uniformly in $x,\mu$. Furthermore, there exists a classical solution to the equation
\be \label{Phi}
    L_0^{x,\mu}\Phi_k(x,y,\mu)=-\lambda f_k(x,y,\mu),\; k\in \{1,2,...,d\},\; x,y\in\R^d,\,\mu\in\Prob,
\ee
which is $\Phi_k(x,y,\mu)=-\lambda y_k,\; k\in \{1,...,d\}$. Now, let
\begin{align}\label{eq:limitingcoefficients}
\gamma(x,y,\mu)& :=c(x,y,\mu)-\lambda g(x,y,\mu)\\
D(x,y,\mu) & :=D_1(x,y,\mu)+D^\top_1(x,y,\mu)+\frac{1}{2}\sigma(x,y,\mu)\sigma^\top(x,y,\mu)-\frac{\lambda ^2}{2} \sigma_2^0\sigma_2^{0\top}(x,\mu) \nonumber\\
D_1(x,y,\mu)& := -\frac{\lambda}{2}[\lambda f(x,y,\mu)\otimes y+\tau_1(x,y,\mu)\sigma^\top(x,y,\mu)] \nonumber\\
\Sigma(x,\mu)& :=\sigma_1^0(x,\mu)-\lambda\sigma_2^0(x,\mu)\nonumber
\end{align}
and
\begin{align}\label{eq:averagedlimitingcoefficients}
\bar{\gamma}(x,\mu) := \biggl[\int_{\R^d} \gamma(x,y,\mu) \pi(dy;x,\mu)\biggr],\;\;
\bar{D}(x,\mu) :=\biggl[\int_{\R^d} D(x,y,\mu) \pi(dy;x,\mu)\biggr].
\end{align}

In this paper, we will study the convergence as well as the rate of convergence, of $\L(X^{\epsilon,m}_t|\F^0_t)$ to $\L(X^\mu_t|\F^0_t)$ on the space $\Prob$, where $X^\mu_t$ satisfies the averaged McKean-Vlasov SDE:
\begin{align}\label{eq:averagedMcKeanVlasov}
X^\mu_t = \eta^2+\int_0^t \bar{\gamma}(X^{\mu}_s,\L(X^{\mu}_s|\F^0_s))ds+&\int_0^t\sqrt{2}\bar{D}^{1/2}(X^{\mu}_s,\L(X^{\mu}_s|\F^0_s))dW^2_s\nonumber\\
&+\int_0^t\Sigma(X_s^{\mu},\L(X^{\mu}_s|\F^0_s))dB_s^0.
\end{align}

The equation is posed on a possibly different filtered probability space supporting a $d$-dimensional Brownian motion $W^2$, $X^{\epsilon,\mu}_0=\eta^2$ is random variable on this new probability space independent from $W^2$ with $\eta^2\sim \mu \in\Prob$. Finally, $\bar{D}^{1/2}(x,\mu)$ is the unique positive semi-definite matrix such that $\bar{D}^{1/2}(x,\mu)\bar{D}^{1/2}(x,\mu)=\bar{D}(x,\mu)$. Note that it is not clear that $\bar{D}$ is symmetric and positive semi-definite, therefore, as in \cite{Spil}, we will include it in our assumptions.\\

\subsection{Notation} $\sbullet[0.75]$ For any two matrices $A,B$ we denote $A:B=\tr(AB^T)$.\\
$\sbullet[0.75]$ For $i=1,2$, $\pi^i:\R^d\times\R^d\rightarrow \R^d$ is the projection $\pi^i(x_1,x_2)=x_i$.\\
$\sbullet[0.75]$ Let $X,\;Y$ and $Z \;(=\R\text{ or }\R^d)$ be metric spaces. For any function $h:X\times Y \times \Prob \rightarrow Z$ we set $\tilde{h}(x,y,m)=h (x,y,\pi^1_*m)$ for any $m\in \Joint$.\\
$\sbullet[0.75]$ If $X,Y,Z=\R^d$, we also write $\overline{h}(x,\mu)=\int h(x,y,\mu) \pi_{x,\mu}(dy)$, where $\pi_{x,\mu}$ is the probability measure obtained in \eqref{invariant}.\\
$\sbullet[0.75]$ We denote by $\mathcal{A}$ the set of all $m\in\Joint$ such that $\pi^2_*m$ has finite $p$-moments for every $p>0$. We note that $\mathcal{A}$ is convex.\\
$\sbullet[0.75]$ For $s\in [t,T]$, we will use the symbol $(X_s^{\ep},Y_s^{\ep})^{m,t}$ for the solution of \eqref{eq:slow-fastMcKeanVlasov} with initial joint distribution (for $s=t$) equal to $m$. Similarly, we define $X_s^{\mu,t}$ for the solution of \eqref{eq:averagedMcKeanVlasov}.\\
$\sbullet[0.75]$ We define the product structure $(\Omega,\F,\mathbb{P},\{ \F_t \})$, where $\Omega=\Omega^1\times \Omega^0$, $(\F,\mathbb{P})$ is the completion of $( \F^1\otimes \F^0, \mathbb{P}^1\otimes \mathbb{P}^0)$ and $\{\F_t\}$ is the complete right continuous augmentation of $\{ \F^1_t\otimes \F^0_t\}$.\\
$\sbullet[0.75]$ For $i=1,2$, we will denote by $\mathbb{E}^i$ the integration with respect to $\mathbb{P}^i$ and by $\mathbb{E}$ the integration with respect to $\mathbb{P}$.\\
$\sbullet[0.75]$ For the operator $L_0^{x,\mu}$ (or later $L_{0,2}^{x,x',\mu}$) we will sometimes abuse the notation and write $L_0^{x,\mu} \phi$ even if $\phi$ is vector or matrix valued meaning that we apply $L_0^{x,\mu}$ to every coordinate of $\phi$.

\subsection{The main result} The aforementioned convergence result involves tests functions $G$ which are at least 4 times continuously differentiable. The exact statement is the following.

\begin{thm} \label{main}
Assume Hypotheses (H1)-(H6) and let $T\in [0,+\infty)$, $m\in\mathcal{A}$ and $t\in [0,T]$. We consider a set of multi-indices $\bm{\dot{\zeta}}$ such that any $(n,\bm{\beta})$ with $|(n,\bm{\beta})|\leq 4$ is in $\bm{\dot{\zeta}}$.
Then, there exists a constant $C(T)$, independent of $\mu=\pi^1_*m$, such that for any $G\in \mathcal{M}_{b,L}^{\bm{\dot{\zeta}}}(\Prob;\R)$
$$\sup_{t\in[0,T]}| u^{\ep}(t,m)-u(t,\mu)|\leq \ep C(T)||G||_{\mathcal{M}_{b,L}^{\bm{\dot{\zeta}}}},$$
where $u$ is the unique classical solution of \eqref{eq:W2CauchyProblem} and $u^{\ep}$ is as in \eqref{uep}. Equivalently,
\be \label{final est}
\sup_{t\in [0,T]}\bigg|\mathbb{E}^0\bigg[ G(\L(X_T^{\ep,m,t}|\F^0_T))-G(\L(X_T^{\mu,t}|\F^0_T))\bigg ]\bigg |\leq \ep C(T)||G||_{\mathcal{M}_{b,L}^{\bm{\dot{\zeta}}}}.
\ee
\end{thm}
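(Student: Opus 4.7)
My plan is to adapt the perturbed test-function method of Papanicolaou-Stroock-Varadhan to the Wasserstein setting. By construction $u^\ep(t,m) = \mathbb{E}^0[G(\L(X_T^{\ep,m,t}|\F_T^0))]$ should satisfy a backward Cauchy problem on $\Joint$ whose generator, thanks to the scaling of \eqref{eq:slow-fastMcKeanVlasov}, splits as $\L^\ep = \ep^{-2}\L_0 + \ep^{-1}\L_1 + \L_2$. Here $\L_0$ acts on the $y$-fibre through the operator $L_0^{x,\mu}$ of \eqref{L0 real}, $\L_1$ captures the $\ep^{-1}$-drift $\lambda f$ in the slow equation together with the $\tau_1$-$\sigma$ coupling, and $\L_2$ absorbs the bounded drift $c$, the diffusion $\sigma$, and the common-noise term $\sigma_1^0$. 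The target function $u(t,\mu)$ is the classical solution of \eqref{eq:W2CauchyProblem}, the limiting Cauchy problem on $\Prob$ with coefficients $(\bar\gamma,\bar D,\Sigma)$ from \eqref{eq:averagedlimitingcoefficients} and terminal data $G$.

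\textbf{Correctors via the cell problem.}
The ansatz is
\begin{equation*}
u^\ep_{\mathrm{app}}(t,m) := u(t,\pi^1_*m) + \ep\, u_1(t,m) + \ep^2\, u_2(t,m).
\end{equation*}
Matching powers of $\ep$ gives $\L_0 u(t,\pi^1_*m)=0$ automatically (since $u$ depends on $m$ only through its first marginal), then the first cell problem $\L_0 u_1 = -\L_1 u$, and finally $\L_0 u_2 = \partial_t u + \L_1 u_1 + \L_2 u$. Each of these is a Poisson equation for $L_0^{x,\mu}$ in the $y$-variable, solvable via Proposition 1 of \cite{Pardoux} provided its right-hand side is centered against $\pi(\cdot;x,\mu)$. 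The solution of the $\ep^{-1}$ problem is available in closed form through $\Phi_k(x,y,\mu)=-\lambda y_k$ from \eqref{Phi}, so $u_1$ is linear in $y$ integrated against $m$ and paired with the Lions derivative $\partial_\mu u(t,\pi^1_*m)$. The Fredholm condition at the $\ep^0$ stage forces $\partial_t u + \bar{\L} u = 0$ on $\Prob$; a direct computation using \eqref{eq:limitingcoefficients}-\eqref{eq:averagedlimitingcoefficients} identifies $\bar{\L}$ with the generator driving \eqref{eq:averagedMcKeanVlasov}, which is precisely the equation satisfied by $u$. This secures existence of $u_2$, with polynomial-in-$y$ growth.

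\textbf{From expansion to rate.}
Once $u^\ep_{\mathrm{app}}$ is in place, I would apply the conditional Wasserstein It\^o formula to $s\mapsto u^\ep_{\mathrm{app}}(s,\L((X_s^\ep,Y_s^\ep)|\F_s^0))$ on $[t,T]$, take $\mathbb{E}^0$, and use that $u^\ep_{\mathrm{app}}(T,\cdot)$ coincides with $G(\pi^1_*\cdot)$ up to $O(\ep)$ boundary corrections from $u_1(T,\cdot)$ and $u_2(T,\cdot)$. By construction the $\ep^{-2}$ and $\ep^{-1}$ generator contributions cancel, leaving a remainder of the form
\begin{equation*}
\ep\int_t^T \mathbb{E}^0\bigl[\partial_t u_1 + \L_1 u_2 + \L_2 u_1\bigr]\,ds + \ep^2\int_t^T \mathbb{E}^0\bigl[\partial_t u_2 + \L_2 u_2\bigr]\,ds.
\end{equation*}
Each integrand is controlled by a polynomial in $|Y_s^\ep|$, and uniform $p$-moment bounds on $Y^\ep$ (which hold for any $m\in\mathcal{A}$ by the dissipativity built into Hypotheses (H1)-(H6)) close the estimate as $\ep\, C(T)\|G\|_{\mathcal{M}_{b,L}^{\bm{\dot{\zeta}}}}$, uniformly in $t\in[0,T]$ and in $\mu=\pi^1_*m$.

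\textbf{Main obstacle.}
The real difficulty is not the formal expansion but rigorously handling the correctors as smooth functionals on $\Joint$. This rests on three pillars: classical solvability of \eqref{eq:W2CauchyProblem} with Lions-type regularity in $\mu$ up to order four (precisely what the multi-index threshold $|(n,\bm{\beta})|\le 4$ and the norm $\|\cdot\|_{\mathcal{M}_{b,L}^{\bm{\dot{\zeta}}}}$ are engineered to encode, since each cell-problem level consumes derivatives of $u$); polynomial-in-$y$ growth estimates on $u_1,u_2$ that remain compatible with the $\mathcal{A}$-uniform moment bounds on $Y^\ep$; and a version of It\^o's formula on $\Joint$ valid in the presence of the common noise $B^0$ driving the conditional law itself. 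The first item is the analytic bottleneck — presumably the content of the paper's preliminary material on the Wasserstein Cauchy problem — while the rest amounts to bookkeeping against the classical estimates for $\pi$ and $L_0^{x,\mu}$.
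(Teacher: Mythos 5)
Your proposal follows essentially the same route as the paper: the same two-corrector ansatz $u+\ep u_1+\ep^2 u_2$, the same cell problems solved through the Poisson equation for $L_0^{x,\mu}$ (the paper uses the doubled-variable operator $L_{0,2}^{x,x',\mu}$ for $u_2$, to handle the $D^2_{\mu\mu}u$ and common-noise cross terms), the same conditional It\^o argument with moment bounds on $Y^\ep$ to control the remainder, and the same identification of the analytic bottleneck as fourth-order Lions regularity of the limiting Cauchy problem. Aside from a sign slip in your $O(1)$ equation (it should read $\L_0u_2=-\partial_t u-\L_1u_1-\L_2u=\L u-\L_1u_1-\L_2u$, which does not affect the Fredholm condition you correctly extract), the argument matches the paper's.
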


%\begin{rmk}
%\end{rmk}
\vspace{4mm}

\textbf{The idea of the proof} is to show that the solution of the McKean-Vlasov control equation associated with \eqref{eq:slow-fastMcKeanVlasov} converges to the solution of the McKean-Vlasov control equation associated with the averaged process \eqref{eq:averagedMcKeanVlasov}. Indeed, as discussed in \cite{Pham} \footnote{In our case there is no control} the McKean-Vlasov control equation associated with the system \eqref{eq:slow-fastMcKeanVlasov} is 
\be \label{MFC eq}
\begin{cases}
\dot{u}^{\ep}(t,m) +\mathcal{L}^{\ep} u^{\ep}(t,m)=0,\;& \text{ in }[0,T)\times \Joint, \\
u^{\ep}(T,m)=G(\pi^1_*m),& \text{ in } \Joint,
\end{cases}
\ee
where 
\vspace{-2mm}

$$\mathcal{L}^{\ep}u(t,m)=\L_2u +\frac{1}{\ep}\L_1 u +\frac{1}{\ep^2}\L_0 u,\;\text{ with }$$

\begin{align}
\L_2u=& \int_{\R^{2d}} \bigg( \tilde{c}(x,y,m)\cdot D_x \frac{\delta u}{\delta m}++\frac{\tilde{\sigma\sigma^\top}(x,y,m)}{2}:D^2_{xx}\frac{\delta u}{\delta m}+\frac{\tilde{\sigma^0_1\sigma^{0\top}_1}(x,m)}{2}:D^2_{xx}\frac{\delta u}{\delta m}\bigg) m(dx,dy) \nonumber\\
& +\int_{\R^{2d}}\int_{\R^{2d}}\bigg( \frac{\tilde{\sigma_1^0}(x,m)\tilde{\sigma_1^{0\top}}(x',m)}{2}: D^2_{xx'}\frac{\delta^2 u}{\delta m^2}\bigg) m(dx,dy)m(dx',dy'), \label{L2}
\end{align}

\begin{align}
&\L_1u=\frac{1}{2}\int_{\R^{2d}}\int_{\R^{2d}}\bigg(\tilde{\sigma_2^0}(x,m)\tilde{\sigma_1^{0\top}}(x',m): D^2_{x'y}\frac{\delta^2 u}{\delta m^2}\bigg) m(dx,dy)m(dx',dy')\label{L1}\\
&\hspace{0.9cm}+\frac{1}{2}\int_{\R^{2d}}\int_{\R^{2d}}\bigg(\tilde{\sigma_1^0}(x,m)\tilde{\sigma_2^{0\top}}(x',m): D^2_{xy'}\frac{\delta^2 u}{\delta m^2}\bigg) m(dx,dy)m(dx',dy')\nonumber\\
&+\int_{\R^{2d}}\bigg( \tilde{f}(x,y,m)\cdot D_x\frac{\delta u}{\delta m}+\tilde{g}(x,y,m)\cdot D_y\frac{\delta u}{\delta m}+\tilde{\tau_1\sigma^\top}(x,y,m):D^2_{xy}\frac{\delta u}{\delta m}\nonumber\\
&\hspace{9.8cm}+\tilde{\sigma^0_1\sigma^{0\top}_2}(x,m):D^2_{yx}\frac{\delta u}{\delta m}\bigg) m(dx,dy) ,\nonumber
\end{align}

\begin{align}
&\L_0 u=\int_{\R^{2d}}\int_{\R^{2d}} \bigg( \frac{\tilde{\sigma_2^0}(x,m)\tilde{\sigma_2^{0\top}}(x',m)}{2}: D^2_{yy'}\frac{\delta^2 u}{\delta m^2}\bigg) m(dx,dy)m(dx',dy')\label{L0} \\
&+\int_{\R^{2d}}\bigg ( \tilde{f}(x,y,m)\cdot D_y\frac{\delta u}{\delta m}+\frac{(\tilde{\tau_1\tau_1^\top}+\tilde{\tau_2\tau_2^\top})(x,y,m)}{2}:D^2_{yy}\frac{\delta u}{\delta m}+\frac{\tilde{\sigma^0_2\sigma^{0\top}_2}(x,m)}{2}:D^2_{yy}\frac{\delta u}{\delta m} \bigg) m(dx,dy)\nonumber
\end{align}

and where the terminal condition $G$ is chosen to depend only on the $x$-marginal of $m$.\\  We know that, 
in general, 
\be \label{uep}
u^{\ep}(t,m)=\mathbb{E}^0[G(\mathcal{L}(X_T^{\ep,t,m}|\F^0_T))]
\ee
is a solution of \eqref{MFC eq} (see e.g \cite{Pham}, where $u^{\ep}$ is a viscosity solution). \\
The McKean-Vlasov control equation associated with the averaged dynamics \eqref{eq:averagedMcKeanVlasov} is
\be \label{eq:W2CauchyProblem}
\begin{cases}
\dot{u}(t,\mu)+ \L u(t,\mu)=0 ,& (t,\mu)\in [0,T)\times\Prob\\
u(T,\mu) = G(\mu),& \mu\in \Prob,
\end{cases}
\ee
with
\begin{align}
\L u(t,\mu)=\int_{\R^d}(\bar{\gamma}(x,\mu)\cdot D_\mu u(t,\mu,x) + (\bar{D}(x,\mu)+\frac{1}{2}\Sigma\Sigma^\top (x,\mu)):D_x D_\mu u(t,\mu,x))\mu(dx)\nonumber\\ 
\hspace{1cm} +\frac{1}{2}\int_{\R^{2d}}\Sigma(x,\mu)\Sigma^\top (x',\mu): D^2_{\mu\mu}u(t,\mu,x,x')\mu(dx)\mu(dx')\label{operator}
\end{align}
and where we choose $G$ to be the same terminal conditions as in \eqref{MFC eq}, and has solution $u(t,\mu)=\mathbb{E}^0[G(\L(X_T^{t,\mu}|\F^0_T))]$. Now, let $\mu$ be the $x$-marginal of $m$. We are expecting that, heuristically, under our assumptions on the coefficients, $u^{\ep}(t,m)$ will have an expansion of the form $u(t,\mu)+\ep u_1(t,m)+\ep^2 u_2(t,m)=u(t,\mu)+O(\ep)$, where $u_1,u_2$ will be some corrector functions. This, as $\ep\rightarrow 0$ will converge to $u(t,\mu)$, which gives us the desired convergence and the rate of convergence. The proof relies heavily on the regularity properties of the solutions of the McKean-Vlasov control equation \eqref{MFC eq} and the Poisson equation associated with \eqref{L0}.

\subsection{Organization of the paper}
In section 2, we state our assumptions on the coefficients as well as some preliminary properties related to \eqref{eq:slow-fastMcKeanVlasov}. Sections 3 and 4 contain some results on the McKean-Vlasov control equation and the Poisson problem, which will be useful in the proof. In section 5, we first construct the correctors $u_1,u_2$ and we use the results from sections 3 and 4 to to establish bounds on them and their derivatives. Finally, we use the properties of $u_1,u_2$ to finish the proof of Theorem \ref{main}. We use the appendix to present the technical proofs and some remarks on the main result of the paper.

\section{ Assumptions and Preliminary results}
Before we state the assumptions, we introduce the multi-index notation proposed in \cite{crisan} and used in \cite{Spil}.

\begin{defn} (Multi-index notation)\label{multi}
Let $n,l,k$ be natural numbers and $\bm{\beta}=(\beta_1,...,\beta_n)$ be an $n$-dimensional vector of natural numbers. Then, we call any ordered tuple $(n,l,\bm{\beta})$ or $(n,\bm{\beta})$ a multi-index. For a function $h:\R^d\times\Prob\rightarrow \R^k$, the $(n,l,\bm{\beta})$-derivative of $h$ is defined as
$$D^{(n,l,\bm{\beta})}h(x,m,z_1,...,z_n):= D_{z_n}^{\beta_n}...D_{z_1}^{\beta_1}D_x^lD_m^nh(x,m,z_1,...,z_n),$$
where $D_m$ is the intrinsic Wasserstein derivative (we refer to the Appendix). Similarly, for a function $h:\Prob\rightarrow \R^k$ we define
$$D^{(n,\bm{\beta})}h(m,z_1,...,z_n)=D_{z_n}^{\beta_n}...D_{z_1}^{\beta_1}D_m^nh(m,z_1,...,z_n).$$
\end{defn}
\vspace{2mm}
In the spaces we will define below, we will be using the symbol $\bm{\zeta}$ for a collection of multi-indices. We will say that $\bm{\zeta}$ is a \textbf{complete} collection of multi-indices if for every $(n,l,\bm{\beta})\in \bm{\zeta}$ we have that $\bm{\zeta}$ contains all lower-order multi-indices of the same type. For example, if $(2,0,(1,1))\in\bm{\zeta}$, then $(2,0,(1,0)), (2,0,(0,1)),(1,0,1),(1,0,0)$ and $(0,0,0)$ are also in $\bm{\zeta}$.
\vspace{2mm}

\begin{defn}
Let $\bm{\zeta}$ be a collection of multi-indices of the form $(n,l,\bm{\beta})$ and $k,j,d\in\mathbb{N}$ ($k$ may also be in $\mathbb{N}\times\mathbb{N}$). We define $\mathcal{M}_b^{\bm{\zeta}}(\R^j\times\Prob;\R^k)$ to be the class of functions $h:\R^j\times\Prob\rightarrow \R^k$ such that the $(n,l,\bm{\beta})-$derivative of $h$ exists and satisfies
$$||h||_{\mathcal{M}_b^{\bm{\zeta}}}:=\sup_{(n,l,\bm{\beta})\in\bm{\zeta}}\sup_{x\in\R^j,z_1,...,z_n\in\R^d,m\in\Prob}|D^{(n,l,\bm{\beta})}h(x,m,z_1,...,z_n)|\leq C.$$
We also write that $h\in\mathcal{M}_{b,L}^{\bm{\zeta}}$, if for all $(n,l,\bm{\beta})\in \bm{\zeta}$, $x,x'\in\R^j$, $z_1,...,z_n,z_1',...,z_n'\in\R^d$ and $m,m'\in\Prob$
$$|D^{(n,l,\bm{\beta})}h(x,m,z_1,...,z_n)-D^{(n,l,\bm{\beta})}h(x',m',z_1',...,z_n')|\leq C_L\bigg( |x-x'|+\sum_{i=1}^n|z_i-z_i'|+\dw(m,m')\bigg).$$
We can define the spaces $\mathcal{M}_b^{\bm{\zeta}}(\Prob;\R^k)$ and $\mathcal{M}_{b,L}^{\bm{\zeta}}(\Prob;\R^k)$ analogously, where $\bm{\zeta}$ will now be a collection of multi-indices of the form $(n,\bm{\beta})$, as explained in the previous definition.\\
Furthermore, we define the class $\mathcal{M}_p^{\bm{\zeta}}(\R^j\times\R^j\times \Prob ;\R^k)$ which consists of functions $h:\R^j\times\R^j\times\Prob\rightarrow \R^k$ such that $h(\cdot,y,\cdot)\in \mathcal{M}_b^{\bm{\zeta}}$ for all $y\in\R^j$ and for each multi-index $(n,l,\bm{\beta})\in \bm{\zeta}$
there exists $m\in\mathbb{N}$ and $C>0$ such that 
$$\sup_{x\in \R^j, z_1,...,z_n\in \R^d,m\in \Prob}|D^{(n,l,\bm{\beta})}h(x,y,m,z_1,...,z_n)|\leq C(1+|y|^m)$$
and there exists $\theta\in (0,1]$ (independent of the multi-index) such that
\begin{multline*}
|D^{(n,l,\bm{\beta})}h(x,y,m,z_1,...,z_n)-D^{(n,l,\bm{\beta})}h(x',y',m',z_1',...,z_n')|\leq\\
C\bigg( 1\wedge |y-y'|^{\theta}+|x-x'|+\sum_{i=1}^n|z_i-z_i'|+\dw (m,m')\bigg)\bigg(1+|y|^m+|y'|^m\bigg).
\end{multline*}
Moreover, we introduce $\mathcal{M}_b^{\bm{\zeta}}([0,T]\times\R^j\times\Prob;\R^k)$ which is the class of functions $h:[0,T]\times \R^j\times\Prob\rightarrow \R^k$ such that $h(\cdot,x,m)$ is continuously differentiable on $(0,T)$ for all $x\in\R^j,m\in\Prob$ with time derivative denoted by $\dot{h}(t,x,m)$, $h(t,\cdot,\cdot)\in \mathcal{M}_b^{\bm{\zeta}}$ for all $t\in [0,T]$ with $||h(t,\cdot,\cdot)||_{\mathcal{M}_b^{\bm{\zeta}}}\leq C$ uniformly in $t$ and all the derivatives of $h$ and $\dot{h}$ are jointly continuous with respect to the time, space and measure variables. The norm in this space is defined as:
$$||h||_{\mathcal{M}_b^{\bm{\zeta}}([0,T]\times \R^j\times\Prob;\R^k)}:=\sup_{t\in [0,T]}||h(t,\cdot)||_{\mathcal{M}_b^{\bm{\zeta}}}+\sup_{t\in [0,T], x\in \R^j,m \in\Prob} |\dot{h}(t,x,m)|.$$
Finally, we can define the spaces $\mathcal{M}_{b,L}^{\bm{\zeta}}([0,T]\times\R^j\times\Prob;\R^k)$, $\mathcal{M}_b^{\bm{\zeta}}([0,T]\times\Prob;\R^k)$, $\mathcal{M}_b^{\bm{\zeta}}([0,T]\times\Prob; \R^k)$ and $\mathcal{M}_p^{\bm{\zeta}}([0,T]\times\R^j\times\R^j\times\Prob;\R^k)$ similarly.
\end{defn}

\begin{rmk}
(i) Notice that for the class $\mathcal{M}_p^{\bm{\zeta}}(\R^j\times\R^j\times \Prob ;\R^k)$ and $\mathcal{M}_p^{\bm{\zeta}}([0,T]\times \R^j\times\R^j\times \Prob ;\R^k)$ , we may choose $m,C$ to be independent of the multi-index, because the set of multi-indices is finite. So for the function $h$ we will write $m=p_h$ and $C=C_h$. For both classes, we thus have $||h||_{\mathcal{M}_p^{\bm{\zeta}}}\leq C_h(1+|y|^{p_h})$.\\
(ii) In the above definitions we may have $\mathcal{A}$ or any convex subset of $\mathcal{P}_2(\R^d)$ instead of the whole $\mathcal{P}_2(\R^d)$.
\end{rmk}

\subsection{Assumptions}
%For the multi-indices:
%\begin{multline*}
%\bm{\hat{\zeta}}:=\{ (0,j_1,0),(1,j_2,j_3),(2,j_4,(j_5,j_6)),(3,0,(j_7,0,0)): 0\leq j_1\leq 4, j_2+j_3\leq 4,\\ j_4+j_5+j_6\leq 2, j_7=0,1\},
%\end{multline*}
%\begin{multline*}
%\bm{\hat{\zeta}_1}:=\{ (0,j_1,0),(1,j_2,j_3),(2,j_4,(j_5,j_6)),(3,j_7,(j_8,0,0)): 0\leq j_1\leq 5,j_3\leq 4, j_2+j_3\leq 5,\\
%j_5+j_6\leq 2, j_4+j_5+j_6\leq 3, j_7+j_8\leq 1\}
%\end{multline*}
%the following assumptions hold:\\

Let $\hat{\bm{\zeta}}$ be a set of multi-indices such that every $(n,l,\bm{\beta})$ such that $|(n,l,\bm{\beta})|=n+l+\beta_1+...+\beta_n\leq 4$ is in $\hat{\bm{\zeta}}$. The following hypotheses hold:

(H1) There exist $\alpha,\beta>0$ such that 
$$0<\alpha|z|^2\leq z^Ta(x,y,\mu)z\leq \beta|z|^2<\infty,\;\text{for all }x,y,z\in \R^d,\; z\neq 0,\; \mu\in\Prob$$
and $\tau_1,\tau_2$ are bounded, have two uniformly bounded derivatives in $y$ and both these derivatives are H\"older continuous in $y$ uniformly in $(x,\mu)$.\\

(H2) There exist constant $C_1,C_2>0$ independent of $x,y,\mu$ such that 
$$f(x,y,\mu)\cdot y\leq -C_1|y|^2+C_2,\;\text{for all }x,y\in\R^d,\;\mu\in\Prob,$$
$f$ grows at most linearly in $|y|$, $f$ has two uniformly bounded derivatives in $y$ and both these derivatives are H\"older continuous in $y$ uniformly in $(x,\mu)$.\\

(H3) $\sigma^0_1,\sigma^0_2$ are in $\mathcal{M}^{\hat{\bm{\zeta}}}_b(\R^d\times\Prob;\R^{d\times d})$.\\

(H4) The functions $\tau_2,\tau_1, \sigma, c,f,g,\sigma^0_1,\sigma^0_2$ are globally Lipschitz continuous in $(x,y,\mu)$. In addition $g$ is uniformly bounded.\\

(H5) $g,c\in \mathcal{M}_p^{\bm{\hat{\zeta}}}(\R^d\times\R^d\times\Prob;\R^d)$, $\sigma,\tau_1\in \mathcal{M}_p^{\bm{\hat{\zeta}}}(\R^d\times\R^d\times\Prob;\R^{d\times m})$,\\
$a\in \mathcal{M}_p^{\bm{\hat{\zeta}}}(\R^d\times\R^d\times\Prob;\R^{d\times d})$ and $f\in \mathcal{M}_p^{\bm{\hat{\zeta}}}(\R^d\times\R^d\times\Prob;\R^d)$.\\

(H6) For $\lambda=0$, $\bar{D}$ in equation \eqref{eq:averagedlimitingcoefficients} is positive, that is there exists an $\alpha '>0$ such that $z^T\bar{D}(x,\mu)z\geq \alpha' |z|^2>0$ for every $x,z\in \R^d,\; z\neq 0,\; \mu\in \Prob$.\\

\begin{rmk} \label{remark for appendix}
(i) We note that Assumption (H4) is sufficient for the existence and the uniqueness of a solution of \eqref{eq:slow-fastMcKeanVlasov} (see e.g \cite{Rockner1}).\\
(ii) Assumptions (H1)-(H3) are useful in the solution of the Poisson equation. Specifically, we use them in Proposition \ref{poisson for calculations}. Assumption (H2) is also used in the proof of Proposition \ref{inv est}.\\
(iii) The amount of regularity we have on the coefficients (assumptions (H3), (H4), (H5)) is needed to establish the analogous regularity of the averaged coefficients ($\bar{\gamma}, \bar{D}^{1/2}, \Sigma$) and, in consequence, of the solution $u$ of the averaged McKean-Vlasov control equation \eqref{eq:W2CauchyProblem}.\\
(iv) The last assumption (H6) is stated because it is not clear whether $\bar{D}$ is a positive matrix. Due to (H3), (H6) also implies that $\bar{D}$ is again positive if $\lambda$ is small enough. Note that the positivity of $\bar{D}$ implies that $\bar{D}^{1/2}$ exists and is positive. Given that Proposition \ref{poisson for calculations} implies that $\overline{D}\in \mathcal{M}_{b,L}^{\hat{\bm{\zeta}}}$, the positivity of $\overline{D}$ yields $\overline{D}^{1/2}\in \mathcal{M}_{b,L}^{\hat{\bm{\zeta}}}$.\\
(v) The boundness of $g,\;\tau_1,\;\tau_2$ and $\sigma^0_2$ (mentioned in (H1), (H3) and (H4)) is used in the proof of Proposition \ref{inv est}.\\
(vi) In some previous works on the problem without common noise, e.g \cite{Spil}, the coefficient of $\frac{1}{\ep}$ in the slow variable in \eqref{eq:slow-fastMcKeanVlasov} is chosen to be a function $b$, instead of a multiple of $f$, which appears in the fast variable as well. This choice will be explained in the appendix.
\end{rmk}

\vspace{4mm}

\subsection{Preliminary results}
A preliminary observation about system \eqref{eq:slow-fastMcKeanVlasov} is that the distribution of $(X^{\ep}_t,Y_t^{\ep})$ in $\Omega^1\times\Omega^0$, $t>0$, remains in $\mathcal{A}$ provided that the distribution of $(X_0^{\ep},Y_0^{\ep})$ is in $\mathcal{A}$. We hence have the following proposition.

\begin{prop}\label{inv est}
Suppose that (H1)-(H4) hold and $T>0$. Let $(X_t^{\ep},Y_t^{\ep})$ be the solution of \eqref{eq:slow-fastMcKeanVlasov} with initial condition $(X_0,Y_0)=(\eta,\zeta)\sim m\in \mathcal{A}$; that is $\eta\in L^2(\Omega^1,\F^1,\P;\R^d)$  and $\zeta\in L^p(\Omega^1,\F^1,\P;\R^d)$ for every $p>0$. Then, for any $p\in \mathbb{N}$ the following estimate holds
\be\label{est}
\sup_{\ep\in (0,1]}\sup_{t\in [0,T]}\mathbb{E}\bigg[ |Y_t^{\ep}|^{2p} \bigg]\leq C(p)+\mathbb{E}^1[ |\zeta|^{2p}],
\ee
for some constant $C(p)$ depending on $p$. In particular, the conditional expectation $\L( X_t^{\ep},Y_t^{\ep}|\F^0_t)$ is in $\mathcal{A}$, $\mathbb{P}^0-$a.s for any $\ep\in (0,1]$ and any $t\in  [0,T]$.
\end{prop}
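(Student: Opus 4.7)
The plan is to obtain \eqref{est} by applying It\^o's formula to $|Y_t^{\ep}|^{2p}$ for the SDE satisfied by the fast variable in \eqref{eq:slow-fastMcKeanVlasov}, and exploiting the dissipativity condition (H2) to dominate the terms that carry the singular prefactor $\frac{1}{\ep^2}$. Writing the fast equation compactly, the drift has a $\frac{1}{\ep^2}f$ piece and a $\frac{1}{\ep}g$ piece, while the diffusion is of size $\frac{1}{\ep}$ with coefficients $\tau_1,\tau_2,\sigma_2^0$. Applying It\^o to $|Y_t^{\ep}|^{2p}$ produces three kinds of contributions: a dissipative term $\frac{2p}{\ep^2}|Y_t^{\ep}|^{2p-2}\,f(X_t^{\ep},Y_t^{\ep},\L(X_t^{\ep}|\F_t^0))\cdot Y_t^{\ep}$, a lower-order drift term of size $\frac{1}{\ep}$ involving $g$, and quadratic-variation terms of size $\frac{1}{\ep^2}$ involving $|\tau_1|^2+|\tau_2|^2+|\sigma_2^0|^2$ (plus genuine It\^o martingale increments).

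\textbf{Key step.} By assumption (H2), $f(x,y,\mu)\cdot y\leq -C_1|y|^2+C_2$, so the dissipative contribution is bounded above by
\[
\tfrac{2p}{\ep^2}|Y_t^{\ep}|^{2p-2}\bigl(-C_1|Y_t^{\ep}|^2+C_2\bigr)
=-\tfrac{2pC_1}{\ep^2}|Y_t^{\ep}|^{2p}+\tfrac{2pC_2}{\ep^2}|Y_t^{\ep}|^{2p-2}.
\]
By the boundedness assertions in (H1), (H3), (H4) (for $\tau_1,\tau_2,\sigma_2^0,g$), the remaining non-martingale terms are bounded by $\frac{C}{\ep^2}|Y_t^{\ep}|^{2p-2}+\frac{C}{\ep}|Y_t^{\ep}|^{2p-1}$ up to a finite constant $C=C(p)$. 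Crucially, both the good negative term and the dangerous $|Y_t^{\ep}|^{2p-2}$-terms carry the same $\frac{1}{\ep^2}$ prefactor, so Young's inequality $|y|^{2p-2}\leq \delta |y|^{2p}+C_\delta$ (applied with $\delta$ small enough to absorb a fixed fraction of $\frac{C_1}{\ep^2}|Y_t^{\ep}|^{2p}$) produces an estimate of the form
\[
d|Y_t^{\ep}|^{2p}\leq \Bigl(-\tfrac{c_1}{\ep^2}|Y_t^{\ep}|^{2p}+\tfrac{c_2}{\ep^2}\Bigr)dt+dM_t^{\ep},
\]
with $c_1,c_2$ independent of $\ep$ and $M_t^{\ep}$ a local martingale.

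\textbf{Closing the estimate.} A standard localization argument (stopping at $\tau_N=\inf\{t:|Y_t^{\ep}|\geq N\}$) lets us take expectations and remove the martingale. Setting $\phi(t)=\mathbb{E}[|Y_{t\wedge \tau_N}^{\ep}|^{2p}]$, one obtains $\phi'(t)\leq -\frac{c_1}{\ep^2}\phi(t)+\frac{c_2}{\ep^2}$, which integrates via Gr\"onwall to $\phi(t)\leq \phi(0)e^{-c_1 t/\ep^2}+\frac{c_2}{c_1}(1-e^{-c_1 t/\ep^2})\leq \mathbb{E}^1[|\zeta|^{2p}]+\frac{c_2}{c_1}$. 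Sending $N\to\infty$ by Fatou and taking suprema gives \eqref{est}, uniformly in $\ep\in(0,1]$ and $t\in[0,T]$; the uniformity is precisely due to the shared $\frac{1}{\ep^2}$ scaling.

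\textbf{Main obstacle.} The only delicate point is ensuring that the $\frac{1}{\ep^2}$-scaled diffusion and the $\frac{1}{\ep^2}$-scaled dissipative drift balance out with constants independent of $\ep$; this forces us to use Young's inequality in the strong form above rather than just bounding mixed terms crudely, and it is exactly where the boundedness of $\tau_1,\tau_2,\sigma_2^0$ from (H1), (H3) is essential. For the final assertion, the bound $\mathbb{E}[|Y_t^{\ep}|^{2p}]=\mathbb{E}^0[\mathbb{E}^1[|Y_t^{\ep}|^{2p}\mid \F_t^0]]<\infty$ yields $\mathbb{E}^1[|Y_t^{\ep}|^{2p}\mid\F_t^0]<\infty$ $\mathbb{P}^0$-a.s.\ for each $p\in\mathbb{N}$, and a countable intersection over $p$ gives that $\L(X_t^{\ep},Y_t^{\ep}\mid\F_t^0)\in\mathcal{A}$ almost surely.
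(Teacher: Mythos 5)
Your proposal is correct and follows essentially the same route as the paper: apply It\^o's formula to $|Y_t^{\ep}|^{2p}$, take expectations, use the dissipativity condition (H2) together with the boundedness of $g$ and of $a=\frac12(\tau_1\tau_1^\top+\tau_2\tau_2^\top+\sigma_2^0\sigma_2^{0\top})$, and close the estimate by absorbing the lower-order powers of $|Y|$ into the negative $\frac{1}{\ep^2}|Y|^{2p}$ term via Young's inequality and a comparison/Gr\"onwall argument. The paper simply delegates these last steps to Lemma 4.1 of the Bezemek--Spiliopoulos reference, whereas you spell them out (including the localization and the passage to the conditional law), so the two arguments coincide in substance.
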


\begin{proof}
Fix $p\in\mathbb{N}$. We first note that since $\zeta\in L^{2p}(\Omega^1)$ we may construct $(X_t^{\ep},Y_t^{\ep})$ such that $\mathbb{E}(|Y_t^{\ep}|^{2p})$ is finite for all $t\in [0,T]$. We only have to apply It\^o's formula to $|Y^{\ep}_t|^{2p}$ and we take the expectation $\mathbb{E}$ to derive
\begin{align*}
\mathbb{E}\bigg[|Y_t^{\ep}|^{2p}\bigg]&=\mathbb{E}[|\zeta|^{2p}]+ \frac{2p}{\ep}\int_0^t\mathbb{E}\bigg[g\cdot Y_s^{\ep}|Y_s^{\ep}|^{2p-2}   \bigg ]ds
+\frac{2p}{\ep^2}\mathbb{E}\bigg[ \text{tr}(a)|Y_s^{\ep}|^{2p-2} \bigg]ds\\
&+\frac{2p}{\ep^2}\int_0^t\mathbb{E}\bigg[ f\cdot Y_s^{\ep}|Y_s^{\ep}|^{2p-2}+2(p-1)|Y_s^{\ep}|^{2p-2} a:Y_s^{\ep}\otimes Y_s^{\ep}   \bigg]ds,
\end{align*}
where $a$ is the matrix defined at the introduction before relation \eqref{L0 real}. Since $g,\; a$ are bounded (by Hypotheses (H1), (H3) and (H4)) and $f$ satisfies (H2), the proof now follows as in Lemma 4.1 of \cite{Spil}. 
\end{proof}

\section{The Poisson Equation}
To prove Theorem \ref{main} we will use some regularity results for the Poisson equation associated with the operator \eqref{L0}. In the following results $\bm{\zeta}$ is a complete collection of multi-indices that contains all $(n,l,\bm{\beta})$ with $|(n,l,\bm{\beta})|\leq 2$.

\vspace{2mm}

\begin{prop}\label{poisson for calculations}
Assume (H1) and (H2) and suppose that $h\in \mathcal{M}_p^{\bm{\zeta}}(\R^d\times\R^d\times \Prob;\R)$, $f\in \mathcal{M}_p^{\bm{\zeta}}(\R^d\times\R^d\times \Prob;\R^d)$ and $a\in \mathcal{M}_p^{\bm{\zeta}}(\R^d\times\R^d\times \Prob;\R^{d\times d})$. Then, 
$\bar{h}(x,\mu):=\int_{\R^d}h(x,y,\mu)\pi_{x,\mu}(dy)\in \mathcal{M}_{b,L}^{\bm{\zeta}}(\R^d\times \Prob;\R)$.\\
Moreover, there exists a unique classical solution $H$ to the problem
$$
\begin{cases}
L_0^{x,\mu}H(x,y,\mu)=h(x,y,\mu)-\bar{h}(x,\mu),\\
\int_{\R^d}H(x,y,\mu)\pi_{x,\mu}(dy)=0,
\end{cases}
$$
such that $H$ has at most polynomial growth as $|y|\rightarrow \infty$ and the mixed derivatives
$H, D_{y_i}H$ and $D_{y_iy_j}^2H$ are in $\mathcal{M}_p^{\bm{\zeta}}(\R^d\times\R^d\times \Prob;\R)$ for every $i,j\in \{1,...,d\}$.
\end{prop}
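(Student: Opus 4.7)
The plan is to build $H$ via the Pardoux--Veretennikov probabilistic representation associated with the frozen fast diffusion, and then bootstrap regularity in the parameters $(x,\mu)$ by differentiating the Poisson equation. For each fixed $(x,\mu)$, consider the frozen SDE $dY_t=f(x,Y_t,\mu)\,dt+\sqrt{2a(x,Y_t,\mu)}\,dW_t$, whose generator is $L_0^{x,\mu}$. By (H1) (uniform ellipticity) and (H2) (dissipativity), this diffusion is exponentially ergodic with unique invariant measure $\pi_{x,\mu}$, and the uniform-in-$(x,\mu)$ polynomial moment bounds $\int|y|^q\pi_{x,\mu}(dy)\leq C_q$ for every $q\geq 1$ are already supplied by Proposition 1 of \cite{Pardoux}, cited in the introduction. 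The candidate solution is
\[
H(x,y,\mu):=\int_0^{\infty}\E\bigl[h(x,Y_t^{x,y,\mu},\mu)-\bar h(x,\mu)\bigr]\,dt,
\]
where exponential mixing of $Y_t^{x,y,\mu}$ toward $\pi_{x,\mu}$ combined with the polynomial growth of $h$ in $y$ (from $h\in\mathcal{M}_p^{\bm{\zeta}}$) gives absolute convergence of the integral together with an a priori polynomial bound on $|H(x,y,\mu)|$ in $|y|$. An It\^o computation then verifies both $L_0^{x,\mu}H=h-\bar h$ and the centering $\int H(x,\cdot,\mu)\,d\pi_{x,\mu}=0$.

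Uniqueness inside the class of polynomially growing classical solutions with vanishing $\pi_{x,\mu}$-mean follows from the ergodic identity $V(y)=\E V(Y_t^{x,y,\mu})\to\int V\,d\pi_{x,\mu}=0$ applied to any harmonic $V$ in this class. For the $y$-regularity, I would apply interior Schauder estimates to the frozen operator $L_0^{x,\mu}$ (whose coefficients are H\"older in $y$ uniformly in $(x,\mu)$ by (H1)--(H2)) on balls in $\R^d$, then combine these with the representation formula and the polynomial a priori bound to propagate the polynomial-growth information to $D_{y_i}H$ and $D^2_{y_iy_j}H$, placing all three in $\mathcal{M}_p^{\bm{\zeta}}$ in the $y$-direction. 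The boundedness part of $\bar h\in\mathcal{M}_{b,L}^{\bm{\zeta}}$ follows from differentiating under the integral and using the uniform moment bounds on $\pi_{x,\mu}$ against polynomially growing $h$; the Lipschitz part is a consequence of the parameter-regularity analysis below.

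The main obstacle is the parameter regularity, i.e.\ showing that $D^{(n,l,\bm{\beta})}H$ exists and belongs to $\mathcal{M}_p^{\bm{\zeta}}$ for every multi-index in $\bm{\zeta}$. The plan is to proceed by induction on $|(n,l,\bm{\beta})|\leq 2$, differentiating the identity $L_0^{x,\mu}H(x,y,\mu)=h(x,y,\mu)-\bar h(x,\mu)$ formally in the parameter. Because $L_0^{x,\mu}$ itself depends on $(x,\mu)$ through $f$ and $a$, each differentiation produces commutator contributions of the form $\bigl(D^{(n,l,\bm{\beta})}L_0^{x,\mu}\bigr)H$ together with lower-order parameter-derivatives of $H$ already controlled by the inductive hypothesis. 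A key observation is that, by differentiating the identity $0=\int (L_0^{x,\mu}H)\,d\pi_{x,\mu}$ and using that $\pi_{x,\mu}$ depends smoothly on $(x,\mu)$ (established in parallel from the same Poisson machinery), the resulting source term is automatically centered against $\pi_{x,\mu}$, so one may reinvoke the representation from the first paragraph to define $D^{(n,l,\bm{\beta})}H$ and to obtain its polynomial growth in $y$. The delicate bookkeeping lies in tracking how the polynomial degree in $|y|$ inflates at each induction step and verifying joint continuity of the derivatives in $(x,y,\mu,z_1,\dots,z_n)$; this is precisely where the H\"older-in-$y$ part of $h,f,a\in\mathcal{M}_p^{\bm{\zeta}}$ is essential, since those commutator sources must themselves lie in $\mathcal{M}_p^{\bm{\zeta}}$ for the induction to close. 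Once these parameter derivatives are in hand, differentiating $\bar h=\int h\,d\pi_{x,\mu}$ under the integral sign delivers $\bar h\in\mathcal{M}_{b,L}^{\bm{\zeta}}$ and completes the proof.
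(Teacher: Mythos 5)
Your construction is, in substance, the right one, but note that the paper does not actually carry out any of this: its proof of this proposition is a two-line citation to Lemmas A.5 and A.6 of \cite{Spil} (Bezemek--Spiliopoulos), plus the observation (Remark A.2(iii) of the paper) that under the standing regularity the Lions derivative used in that reference coincides with the intrinsic Wasserstein derivative used here. What you have written is essentially a reconstruction of the argument behind those cited lemmas: the Pardoux--Veretennikov representation $H=\int_0^\infty \E[h(x,Y_t^{x,y,\mu},\mu)-\bar h(x,\mu)]\,dt$ for the frozen ergodic diffusion, It\^o verification, uniqueness from ergodicity, Schauder estimates in $y$, and then parameter regularity by formally differentiating the Poisson equation and re-solving for each derivative, using that the differentiated source is automatically centered against $\pi_{x,\mu}$. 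So the two ``routes'' are the same method; yours simply makes the outsourced work explicit. Two points you should be aware of if you intend to flesh this out rather than cite. First, the parameter here includes the measure variable, so ``differentiating the identity in the parameter'' means taking flat/Lions derivatives in $\mu$ of $H$, of $\pi_{x,\mu}$, and of the coefficients of $L_0^{x,\mu}$; this is the genuinely delicate part of Lemma A.5 of \cite{Spil} (smooth dependence of the invariant measure on $\mu$, and the identification of the two notions of measure derivative), and your sketch acknowledges it only in passing. Second, the conclusion asks for all multi-index derivatives in $\bm{\zeta}$ of $D_{y_i}H$ and $D^2_{y_iy_j}H$ as well, i.e.\ mixed parameter-and-$y$ derivatives; these come from applying the Schauder step to each differentiated equation in the induction, which you should state explicitly. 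Neither point is an error, but both are where the actual difficulty lives, and the paper's choice to cite rather than prove reflects that.
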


\begin{proof}
The first part of the statement follows from Lemma A.6 of \cite{Spil}. The second part is Lemma A.5 from the same paper. Note that, regarding the derivatives with respect to the measure variable, even though the reference is using the Lions derivative, by regularity and due to Remark \ref{Wass rem} (iii), we may use the notion of the Wasserstein derivative (Definition \ref{Wass}).
\end{proof}
\vspace{2mm}

With the idea of doubling the variables in \eqref{L0}, we may consider the operator $L_{0,2}^{x,x',\mu}$ acting on $\phi\in C^2_b(\R^d\times\R^d)$ with
\begin{align}
L_{0,2}^{x,x',\mu}\phi(y,y'):=&f(x,y,\mu)\cdot D_y\phi(y,y')+f(x',y',\mu)\cdot D_{y'}\phi(y,y')\nonumber \\
&+a(x,y,\mu):D^2_{yy}\phi(y,y')+a(x',y',\mu):D^2_{y'y'}\phi(y,y')\nonumber \\
&+\sigma^0_2(x,\mu)\sigma^{0\top}_2(x',\mu):D^2_{yy'}\phi(y,y'). \label{L02}
\end{align}
We observe that due to the positivity of $a$, $L_{0,2}^{x,x',\mu}$ is non-degenerate elliptic. We also notice that the unique distributional solution $\Pi_{x,x',\mu}$ to the adjoint equation 
$$
\begin{cases}
\bigg(L_{0,2}^{x,x',\mu}\bigg)^*\Pi_{x,x',\mu}=0,\\ 
\int_{\R^{2d}}\Pi_{x,x',\mu}(dy,dy')=1,\;\text{ for every }x,\;x'\in\R^d\text{ and }\mu\in\Prob
\end{cases}
$$
has $y$-marginal equal to $\pi_{x,\mu}$ and $y'$-marginal equal to $\pi_{x',\mu}$. Indeed, this follows by observing that $L_{0,2}^{x,x',\mu}\phi(y)=L_0^{x,\mu}\phi(y)$ and $L_{0,2}^{x,x',\mu}\phi(y)=L_0^{x',\mu}\phi(y')$, where $L_0^{x,\mu}$ was introduced in \eqref{L0 real}.

\begin{lem} \label{double poisson}
Assume Hypotheses (H1)-(H6). Let $\varphi : [0,T]\times \R^{4d}\times\Prob \rightarrow \R$ be an element of $\mathcal{M}_p^{\bm{\zeta}}([0,T]\times \R^{2d}\times\R^{2d} \times \Prob ;\R)$. We also assume that 
\be \label{condition}
\int_{\R^{2d}} \varphi(t,x,x',y,y',\mu)\Pi_{x,x',\mu}(dy,dy')=0,\;\text{for every }t\in[0,T],\; x,x'\in\R^d\text{ and }\mu\in \Prob.
\ee
Then, there exists a function $v:[0,T]\times\mathcal{A} \rightarrow \R $ such that for every $(t,m)\in [0,T]\times \mathcal{A}$
\be \label{eq for lemma 1}
\L_0 v(t,m)= \int_{\R^{2d}}\int_{\R^{2d}}\varphi(t,x,x,y,y',\pi^1_*m)m(dx,dy)m(dx',dy').
\ee
\end{lem}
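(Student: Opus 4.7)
The plan is to construct $v$ as a double integral against $m\otimes m$ of a potential $\Phi$ obtained by solving a Poisson equation for the doubled operator $L_{0,2}^{x,x',\mu}$, and then to exploit the fact that the operator $\L_0$ in \eqref{L0} only involves derivatives in the fast variables $y,y'$ of the functional derivatives of $v$. Replacing $\varphi$ by its symmetrization under $(x,y)\leftrightarrow(x',y')$---which preserves both \eqref{condition} (since $\Pi_{x,x',\mu}(dy,dy')=\Pi_{x',x,\mu}(dy',dy)$ by the structural symmetry of $L_{0,2}^{x,x',\mu}$ under this swap) and the right-hand side of \eqref{eq for lemma 1}---I may assume $\varphi(t,x,x',y,y',\mu)=\varphi(t,x',x,y',y,\mu)$. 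For each fixed $(t,x,x',\mu)$, the centering condition \eqref{condition} is precisely the compatibility condition needed to solve
\begin{equation*}
L_{0,2}^{x,x',\mu}\Phi(t,x,x',y,y',\mu)=\varphi(t,x,x',y,y',\mu),\qquad \int_{\R^{2d}}\Phi(t,x,x',y,y',\mu)\,\Pi_{x,x',\mu}(dy,dy')=0.
\end{equation*}
Since $L_{0,2}^{x,x',\mu}$ is uniformly elliptic on $\R^{2d}$ (by (H1) together with the boundedness of $\sigma_2^0$ from (H3)) and inherits the dissipativity (H2) in each of the variables $y$ and $y'$, the arguments underlying Proposition \ref{poisson for calculations} apply on the $2d$-dimensional state to produce a unique $\Phi$ of at most polynomial growth in $(y,y')$ with $\Phi$, $D_y\Phi$, $D_{y'}\Phi$, $D^2_{yy}\Phi$, $D^2_{y'y'}\Phi$ and $D^2_{yy'}\Phi$ in the natural doubled $\mathcal{M}_p^{\bm{\zeta}}$-class. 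Symmetrizing $\Phi$ in $(x,y)\leftrightarrow(x',y')$, I may also take $\Phi(t,x,x',y,y',\mu)=\Phi(t,x',x,y',y,\mu)$.

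I then define
\begin{equation*}
v(t,m):=\int_{\R^{2d}}\int_{\R^{2d}}\Phi(t,x,x',y,y',\pi^1_*m)\,m(dx,dy)\,m(dx',dy').
\end{equation*}
A direct computation of the Wasserstein derivatives of $v$, using the symmetry of $\Phi$, yields
\begin{equation*}
\frac{\delta v}{\delta m}(t,m,\tilde x,\tilde y)=2\int_{\R^{2d}}\Phi(t,\tilde x,x',\tilde y,y',\mu)\,m(dx',dy')+R(t,m,\tilde x),
\end{equation*}
where $R(t,m,\tilde x)$ collects the contributions coming from the chain rule through $\mu=\pi^1_*m$ and is \emph{independent of $\tilde y$}. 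Consequently
\begin{equation*}
D_y\frac{\delta v}{\delta m}(t,m,x,y)=2\int_{\R^{2d}} D_y\Phi(t,x,x',y,y',\mu)\,m(dx',dy'),
\end{equation*}
and the same identity holds with $D^2_{yy}$ in place of $D_y$. A second differentiation in $m$, keeping only those terms which depend on \emph{both} $\tilde y$ and $\tilde y'$, gives
\begin{equation*}
D^2_{yy'}\frac{\delta^2 v}{\delta m^2}(t,m,x,y,x',y')=2\,D^2_{yy'}\Phi(t,x,x',y,y',\mu).
\end{equation*}
This is the decisive simplification: because $\L_0$ involves only $y$- and $y'$-derivatives of $\frac{\delta v}{\delta m}$ and $\frac{\delta^2 v}{\delta m^2}$, the entire $\mu$-dependence of $\Phi$ is annihilated and plays no role in $\L_0 v$.

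Substituting these identities into \eqref{L0} and using the symmetry of $\Phi$ once more to split each factor of $2$ in the single integrals into a primed and an unprimed term, the three single-integral summands of $\L_0 v$ reassemble into
\begin{equation*}
\int_{\R^{2d}}\int_{\R^{2d}}\bigl[f(x,y,\mu)\cdot D_y\Phi+f(x',y',\mu)\cdot D_{y'}\Phi+a(x,y,\mu):D^2_{yy}\Phi+a(x',y',\mu):D^2_{y'y'}\Phi\bigr]m(dx,dy)m(dx',dy'),
\end{equation*}
while the double-integral summand of \eqref{L0} contributes the missing cross term $\sigma_2^0(x,\mu)\sigma_2^{0\top}(x',\mu):D^2_{yy'}\Phi$. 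The resulting integrand is exactly $L_{0,2}^{x,x',\mu}\Phi(t,x,x',y,y',\mu)=\varphi(t,x,x',y,y',\mu)$, which proves \eqref{eq for lemma 1}. The principal technical obstacle is the very first step: establishing existence and $\mathcal{M}_p^{\bm{\zeta}}$-regularity of $\Phi$ for the doubled operator $L_{0,2}^{x,x',\mu}$ is not stated explicitly in the paper (only the single-variable case appears, in Proposition \ref{poisson for calculations}). However, $L_{0,2}^{x,x',\mu}$ is the infinitesimal generator of the pair of fast-variable processes driven by correlated Brownian motions; it is uniformly elliptic thanks to the positivity of $a$ in each component together with the boundedness of the cross term $\sigma_2^0(x,\mu)\sigma_2^{0\top}(x',\mu)$, and the Lyapunov condition (H2) applies separately in $y$ and $y'$, so the Pardoux--Veretenikov framework transfers to the $2d$-dimensional product state essentially unchanged.
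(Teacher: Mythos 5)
Your proposal is correct and follows essentially the same route as the paper: solve the Poisson equation for the doubled operator $L_{0,2}^{x,x',\mu}$ (the paper likewise invokes Proposition \ref{poisson for calculations} for this operator), define $v$ as the double integral of the resulting potential against $m\otimes m$, compute the flat derivatives, note that $\L_0$ only sees the $y$- and $y'$-derivatives so the measure-dependence terms drop out, and reassemble the integrand into $L_{0,2}^{x,x',\mu}\Phi=\varphi$. The only difference is your preliminary symmetrization of $\varphi$ and $\Phi$ under $(x,y)\leftrightarrow(x',y')$, which is a cosmetic simplification of the bookkeeping that the paper instead carries out by tracking the two asymmetric terms and identifying them by a change of variables.
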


\begin{proof}
%Note that due to \eqref{L0 real} we can write 
%$$\L_0 v = \int_{\R^{2d}}L_0^{x,\mu}\frac{\delta v}{\delta m} \; m(dx,dy)+\int_{\R^{4d}} \bigg( \frac{\tilde{\sigma_2^0}(x,m)\tilde{\sigma_2^{0\top}}(x',m)}{2}: D^2_{yy'}\frac{\delta ^2 v}{\delta m^2}\bigg ) m(dx,dy)m(dx',dy')$$
By Proposition \ref{poisson for calculations} applied to the operator $L_{0,2}^{x,x',\mu}$, there exists a function $\chi\in \mathcal{M}_p^{\bm{\zeta}}([0,T]\times \R^{2d}\times\R^{2d} \times \Prob ;\R)$ such that 
\be \label{aux poisson}
L_0^{x,x',\mu}\chi(t,x,x',y,y',\mu)=\varphi(t,x,x',y,y',\mu).
\ee
We claim that $v:[0,T]\times\mathcal{A}\rightarrow \R$ with 
\be \label{definition of v}
v(t,m)= \int_{\R^{2d}}\int_{\R^{2d}}\chi(t,x,x',y,y',\pi^1_*m)m(dx,dy)m(dx',dy')
\ee
satisfies \eqref{eq for lemma 1}. Indeed, since $\chi\in \mathcal{M}_p^{\bm{\zeta}}$ we have 
\begin{align}
\frac{\delta v}{\delta m}(t,m,x,y&)= \int_{\R^{2d}} \chi(t, x,x',y,y',\pi^1_*m)m(dx',dy')\nonumber\\
&+\int_{\R^{2d}}\chi(t,x',x,y',y,\pi^1_*m)m(dx',dy')\nonumber\\
&+ \int_{\R^{4d}}\frac{\delta \chi}{\delta m}(t,\bar{x},x',\bar{y},y',\pi^1_*m, x) m(d\bar{x},d\bar{y})m(dx',dy').\label{first der of v}
\end{align}
In addition, 
\begin{align}
\frac{\delta^2 v}{\delta m^2}(t,m,x,y,x',y')&= \chi(t,x,x',y,y',\pi^1_*m)+\chi(t,x',x,y',y,\pi^1_*m)\nonumber\\
&+ \int_{\R^{2d}}\frac{\delta \chi}{\delta m} (t,x,\bar{x},y,\bar{y},\pi^1_*m,x')m(d\bar{x},d\bar{y})\nonumber\\
&+\int_{\R^{2d}}\frac{\delta \chi}{\delta m}(t,\bar{x},x,\bar{y},y,\pi^1_*m,x')m(d\bar{x},d\bar{y})\nonumber\\
&+ \int_{\R^{4d}}\frac{\delta^2 \chi}{\delta m^2}(t,\bar{x},\tilde{x},\bar{y},\tilde{y},\pi^1_*m, x,x') m(d\bar{x},d\bar{y})m(d\tilde{x},d\tilde{y})\nonumber \\
&+ \int_{\R^{4d}}\frac{\delta \chi}{\delta m}(t,\bar{x},x',\bar{y},y',\pi^1_*m, x)m(d\bar{x},d\bar{y})\nonumber\\
&+\int_{\R^{4d}}\frac{\delta \chi}{\delta m}(t,x',\bar{x},y',\bar{y},\pi^1_*m, x)m(d\bar{x},d\bar{y}). \label{second der of v}
\end{align}
Notice that only the first two terms depend on both $y,\;y'$. Therefore,
\begin{align}
\L_0 v(t,m&)=\int_{\R^{2d}}\int_{\R^{2d}} L_0^{x,\mu}\chi(t, x,x',y,y',\pi^1_*m) m(dx',dy')m(dx,dy)\nonumber\\
& +\int_{\R^{2d}}\int_{\R^{2d}} L_0^{x,\mu}\chi(t, x',x,y',y,\pi^1_*m) m(dx',dy')m(dx,dy)\nonumber\\
&+\int_{\R^{4d}}\frac{ \sigma^0(x,\pi^1_*m)\sigma^{0\top}(x',\pi^1_*m)}{2}: D^2_{yy'}\chi(t,x,x',y,y',\pi^1_*m)m(dx,dy)m(dx',dy')\nonumber\\
&+\int_{\R^{4d}}\frac{ \sigma^0(x,\pi^1_*m)\sigma^{0\top}(x',\pi^1_*m)}{2}: D^2_{yy'}\chi(t,x',x,y',y,\pi^1_*m) m(dx,dy)m(dx',dy').\label{go back}
\end{align}
We observe that, due to $D^2_{yy'}\chi(t,x',x,y',y,\pi^1_*m)$ being a symmetric matrix, the fourth term is equal to 
\begin{multline*}
\int_{\R^{4d}}\frac{ \sigma^0(x',\pi^1_*m)\sigma^{0\top}(x,\pi^1_*m)}{2}: D^2_{yy'}\chi(t,x',x,y',y,\pi^1_*m)\; m(dx,dy)m(dx',dy')=\\ \int_{\R^{4d}}\frac{ \sigma^0(x,\pi^1_*m)\sigma^{0\top}(x',\pi^1_*)}{2}: D^2_{yy'}\chi(t,x,x',y,y',\pi^1_*m)\;m(dx',dy')m(dx,dy),
\end{multline*}
which is equal to the third term. Also, after renaming the variables, the second term is equal to $\int_{\R^{2d}}\int_{\R^{2d}} L_0^{x',\mu}\chi(t, x,x',y,y',\pi^1_*m) m(dx',dy')m(dx,dy)$. Thus, going back to \eqref{go back},
$$\L_0 v(t,m)= \int_{\R^{4d}} L_{0,2}^{x,x',\mu}\chi(t,x,x',y,y',\mu)\;m(dx,dy)m(dx',dy').$$
Combining this with \eqref{aux poisson} yields \eqref{eq for lemma 1}.
\end{proof}

Finally, we prove the following lemma which will provide us with useful estimates when we introduce the correctors $u_1,\; u_2$.
\vspace{2mm}

\begin{lem} \label{estimates}
Assume that Hypotheses (H1)-(H6) hold and let $\chi : [0,T]\times \R^{4d}\times\Prob \rightarrow \R$ be an element of $\mathcal{M}_p^{\bm{\zeta}}([0,T]\times \R^{2d}\times\R^{2d} \times \Prob ;\R)$. We define the function $v:[0,T]\times \mathcal{A}\rightarrow \R$ as in \eqref{definition of v}. Then for every $m\in\mathcal{A}$ there exists a constant $C$ depending only on $T$, $\pi^2_*m$ and $\chi$ such that 
\be \label{main estimate for correctors}
\sup_{t\in [0,T]}\bigg\{ |v(t,m)|+|\dot{v}(t,m)|+|\L_1 v(t,m)|+|\L_2 v(t,m)| \bigg\}\leq C,
\ee
where $\L_1,\; \L_2$ are as in \eqref{L1} and \eqref{L2}.
\end{lem}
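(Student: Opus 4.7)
The strategy is to estimate each term on the left-hand side of \eqref{main estimate for correctors} by differentiating the explicit formula \eqref{definition of v} for $v$, substituting the resulting expressions into $\L_1, \L_2$, and controlling everything through the polynomial growth in $(y, y')$ given by $\chi \in \mathcal{M}_p^{\bm{\zeta}}$, the polynomial growth of the coefficients in \eqref{L1}--\eqref{L2}, and the finiteness of all moments of $\pi^2_*m$.

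First, I would handle $v(t,m)$ and $\dot v(t,m)$ directly. Since $\chi, \dot\chi \in \mathcal{M}_p^{\bm{\zeta}}$, there exist $p_\chi \in \N$ and $C_\chi > 0$ with $|\chi(t,x,x',y,y',\mu)| + |\dot\chi(t,x,x',y,y',\mu)| \leq C_\chi(1+|y|^{p_\chi}+|y'|^{p_\chi})$ uniformly in $t,x,x',\mu$. Substituting into \eqref{definition of v} and its time derivative gives
\[
|v(t,m)| + |\dot v(t,m)| \leq 2 C_\chi \Bigl(1 + \int_{\R^d} |y|^{p_\chi}\, (\pi^2_*m)(dy)\Bigr)^2,
\]
which is finite because $\pi^2_*m$ has finite moments of all orders when $m \in \mathcal A$.

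Next I would tackle $\L_2 v$ and $\L_1 v$ using the identities \eqref{first der of v} and \eqref{second der of v} obtained in the proof of Lemma~\ref{double poisson} for $\frac{\delta v}{\delta m}$ and $\frac{\delta^2 v}{\delta m^2}$. The derivatives $D_x, D_{xx}^2, D_y, D_{yy}^2, D_{xy}^2$ commute with the $m$-integrals in \eqref{first der of v}--\eqref{second der of v} and simply pass to $\chi$, $\frac{\delta \chi}{\delta m}$, and $\frac{\delta^2 \chi}{\delta m^2}$. Since $\chi \in \mathcal{M}_p^{\bm{\zeta}}$ with $\bm{\zeta}$ containing all multi-indices of order $\le 2$, every such derivative is bounded by $C(1+|y|^{p_\chi}+|y'|^{p_\chi})$. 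The coefficients appearing in \eqref{L2} and \eqref{L1} are of two kinds: those depending only on $(x,\mu)$ (namely $\tilde\sigma^0_1, \tilde\sigma^0_2$), which are bounded by (H3); and those depending on $(x,y,\mu)$ (namely $\tilde c, \tilde f, \tilde g, \widetilde{\sigma\sigma^\top}, \widetilde{\tau_1\sigma^\top}, \widetilde{\sigma^0_1\sigma^{0\top}_2}$), which are either bounded (by (H1), (H4)) or have at most polynomial growth in $y$ by (H5).

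Combining these, each integrand in $\L_1 v$ and $\L_2 v$ is pointwise dominated by $C(1+|y|^{q}+|y'|^{q})$ for some $q$ depending only on $\chi$ and on the coefficients. Integrating against $m(dx,dy)m(dx',dy')$ produces a bound of the form
\[
|\L_1 v(t,m)| + |\L_2 v(t,m)| \leq C\bigl(T,\chi\bigr)\bigl(1 + M_q(\pi^2_* m)\bigr),
\]
where $M_q(\pi^2_* m) = \int |y|^q\, (\pi^2_* m)(dy) < \infty$ since $m \in \mathcal A$. Taking supremum over $t \in [0,T]$, all estimates are uniform because $\chi \in \mathcal M_p^{\bm{\zeta}}([0,T]\times\cdots)$ yields constants that are $t$-uniform. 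The main obstacle is purely bookkeeping: one must verify that every operator appearing in \eqref{L1}--\eqref{L2} hits a derivative of $\chi$ of order at most two in each variable type, so that it lies inside the range of the hypothesis $\bm{\zeta}$; once this is checked the bounds assemble routinely.
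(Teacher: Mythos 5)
Your proposal is correct and follows essentially the same route as the paper: bound $v$ and $\dot v$ directly via the polynomial growth of $\chi$, then substitute the explicit formulas for $\frac{\delta v}{\delta m}$ and $\frac{\delta^2 v}{\delta m^2}$ into $\L_1$ and $\L_2$, observe that every resulting integrand is a product of coefficients and derivatives of $\chi$ lying in some $\mathcal{M}_p$ class, and conclude using the finiteness of all moments of $\pi^2_*m$. The only cosmetic difference is your squared moment bound for $|v|+|\dot v|$, which is harmless since the double integral actually linearizes.
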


\begin{proof}
We, essentially, have to prove four estimates. For any $(t,m)\in [0,T]\times\mathcal{A}$ we have, by the definition of the space $\mathcal{M}^{\bm{\zeta}}_p$ and Remark 2.3 (i)
\begin{align}
&|v(t,m)|\leq  C\int_{\R^{4d}}(1+|y|^{p_{\chi}}+|y'|^{p_{\chi}})m(dx,dy)m(dx',dy')\leq C\bigg(1+\int_{\R^d} |y|^{p_{\chi}}\pi^2_*m(dy)\bigg),\label{3.10}\\
&\text{ and  likewise}\nonumber\\
& |\dot{v}(t,m)|\leq \int_{\R^{4d}} |\dot{\chi}(t,x,x',y,y',\pi^1_*m)|m(dx,dy)m(dx',dy')\leq  C\bigg(1+\int_{\R^d} |y|^{p_{\chi}}\pi^2_*m(dy)\bigg).\label{3.11}
\end{align}
To bound $|\L_1v|$ and $|\L_2v|$ we will use \eqref{first der of v}, \eqref{second der of v}. It is easy to see that after plugging \eqref{first der of v}, \eqref{second der of v} in $\L_1v$ and $\L_2 v$ as given by \eqref{L1} and \eqref{L2} we get two sums of terms of the form $\int_{\R^{4d}}\phi(t,x,x',y,y',\pi^1_*)m(dx,dy)m(dx',dy')$ with $\phi$ being a function depending linearly on the derivatives of $\chi$ and the coefficients of \eqref{eq:slow-fastMcKeanVlasov}. Since all these functions are in some $\mathcal{M}_p$, we deduce that any such $\phi$ is in some $\mathcal{M}_p^{\bm{\zeta}'}$ (here $\bm{\zeta'}$ is a set of multi-indices) and hence we can bound any term
$$\bigg|\int_{\R^{4d}}\phi(t,x,x',y,y',\pi^1_*)m(dx,dy)m(dx',dy')\bigg|\leq C\bigg(1+\int_{\R^d} |y|^{p}\pi^2_*m(dy)\bigg)$$
for some $p$ depending on $\chi$ and the coefficients of \eqref{eq:slow-fastMcKeanVlasov}. Thus, for $(t,m)\in [0,T]\times\mathcal{A}$ we deduce the bound
\begin{align}
|\L_1v(t,m)|+|\L_2v(t,m)|\leq C\bigg(1+\int_{\R^d} |y|^{p}\pi^2_*m(dy)\bigg)\label{3.12}
\end{align}
for some $p\in \mathbb{N}$. Since $m\in\mathcal{A}$ (therefore it has finite moments), we combine \eqref{3.10}, \eqref{3.11} and \eqref{3.12} and \eqref{main estimate for correctors} follows.
\end{proof}

\begin{rmk} \label{Remark 1}
Due to the linear dependence of all the terms of $v,\; \dot{v},\; \L_1v$ and $\L_2v$ on $\chi$, it follows from the proof that the constant in \eqref{main estimate for correctors} is a multiple of $C_{\chi}$, where $C_{\chi}$ is defined in Remark 2.3.
\end{rmk}

\section{Cauchy problem associated to McKean-Vlasov processes}

As mentioned in the introduction we consider the McKean-Vlasov control equation associated with \eqref{eq:averagedMcKeanVlasov} which is \eqref{eq:W2CauchyProblem}. The following theorem holds.

\begin{thm} \label{McKean-Vlasov thm}
Assume Hypotheses (H1)-(H6) and that $G\in \mathcal{M}_{b,L}^{\bm{\dot{\zeta}}}(\Prob;\R)$, where $\bm {\dot{\zeta}}$ is as in Theorem \ref{main}. Then, there exists a unique solution $u\in \mathcal{M}_{b,L}^{\bm{\dot{\zeta}}}([0,T]\times \Prob;\R)$ of \eqref{eq:W2CauchyProblem} with 
\be \label{u est}
||u||_{\mathcal{M}_b^{\bm{\dot{\zeta}}}}\leq C(T)||G||_{\mathcal{M}_b^{\bm{\dot{\zeta}}}},
\ee
for some constant $C(T)$ that depends only on $T$, the norms of the coefficients of \eqref{eq:W2CauchyProblem}: $||\bar{\gamma}||_{\mathcal{M}_b^{\bm{\hat{\zeta}}}} $, $||\bar{D}^{1/2}||_{\mathcal{M}_b^{\bm{\hat{\zeta}}}}$ and their Lipschitz constants.\\
Furthermore  $u(t,\mu)=\mathbb{E}^0[G(\L(X_T^{\mu,t}|\F^0_T)]$ and the following estimate holds
\be \label{u est t-der}
\sup_{\substack{t\in [0,T],\mu\in \Prob \\x_1,x_2\in \R^d}}\bigg \{ |D_m\dot{u}(t,\mu,x_1)|+|D_{x_1}D_m\dot{u}(t,\mu, x_1)|+|D^2_m\dot{u}(t,\mu,x_1,x_2)|\bigg\} \leq C(T)||G||_{\mathcal{M}_b^{\bm{\dot{\zeta}}}}.
\ee
\end{thm}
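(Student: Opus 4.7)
The natural approach is to take the probabilistic formula as the \emph{definition} of $u$, namely
$$u(t,\mu):=\mathbb{E}^0\bigl[G(\L(X_T^{\mu,t}|\F^0_T))\bigr],$$
and then verify that it has the required regularity and satisfies \eqref{eq:W2CauchyProblem}. The crucial inputs are already in place: by Proposition \ref{poisson for calculations} together with Remark \ref{remark for appendix} (iv), the averaged coefficients $\bar\gamma$, $\bar D$, $\bar D^{1/2}$ and $\Sigma$ all lie in $\mathcal{M}_{b,L}^{\hat{\bm{\zeta}}}$, and since $\hat{\bm{\zeta}}$ contains every multi-index of total order $\leq 4$, the coefficients of \eqref{eq:averagedMcKeanVlasov} are as smooth as needed to differentiate the conditional flow up to the order prescribed by $\bm{\dot{\zeta}}$.

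The main step is to establish the $\mu$-regularity of $u$. First I would prove well-posedness and moment estimates for the conditional flow $\mu\mapsto X_T^{\mu,t}$ of \eqref{eq:averagedMcKeanVlasov} together with its tangent flows $D_\mu X^{\mu,t}$, $D_x D_\mu X^{\mu,t}$, $D^2_\mu X^{\mu,t},\ldots$, obtained by formally differentiating the averaged SDE in its initial distribution in the sense of Definition \ref{multi}. Each such differentiation produces a linear conditional SDE whose coefficients are built from derivatives of $\bar\gamma, \bar D^{1/2}$ and $\Sigma$ evaluated along the flow; since these derivatives belong to $\mathcal{M}_{b,L}^{\hat{\bm{\zeta}}}$, an induction in the order of differentiation based on Burkholder--Davis--Gundy and Gronwall inequalities yields uniform-in-$(\mu,z_1,\ldots,z_n)$ bounds on the conditional laws of the tangent processes up to order $|(n,\bm{\beta})|\leq 4$. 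Composing with $G\in\mathcal{M}_{b,L}^{\bm{\dot{\zeta}}}$ via the chain rule for the Wasserstein derivative and taking $\mathbb{E}^0$ yields $u\in\mathcal{M}_{b,L}^{\bm{\dot{\zeta}}}([0,T]\times\Prob;\R)$ together with the estimate \eqref{u est}.

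To identify $u$ with a solution of the Cauchy problem, I would apply the conditional It\^o formula on the Wasserstein space to the map $s\mapsto u(s,\L(X_s^{\mu,t}|\F^0_s))$ on $[t,T]$. Since $u$ is smooth enough, this produces, after taking $\mathbb{E}^0$ and using the martingale property of the stochastic integrals in the common-noise filtration, exactly the identity $\dot u+\mathcal{L}u=0$, with $\mathcal{L}$ reading off the coefficients of \eqref{eq:averagedMcKeanVlasov} as in \eqref{operator}; the terminal condition $u(T,\mu)=G(\mu)$ is built in. Uniqueness follows by running the same It\^o computation on any classical solution $\tilde u\in\mathcal{M}_{b,L}^{\bm{\dot{\zeta}}}$, which forces $\tilde u(t,\mu)=\mathbb{E}^0[\tilde u(T,\L(X_T^{\mu,t}|\F^0_T))]=u(t,\mu)$. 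For the time-derivative bound \eqref{u est t-der}, the strategy is to exploit $\dot u=-\mathcal{L}u$: the quantities $D_m\dot u$, $D_{x_1}D_m\dot u$ and $D^2_m\dot u$ are computed by applying one or two further $\mu$- and $x$-derivatives to $\mathcal{L}u$, which requires at most two multi-indices beyond those already bounded for $u$; because $\bm{\dot{\zeta}}$ contains every multi-index of order $\leq 4$, these extra derivatives are available, giving the claimed estimate.

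The main technical obstacle, typical of the Wasserstein setting, is the rigorous propagation of the second-order measure derivative $D^2_\mu X^{\mu,t}$ through the nonlinear conditional flow in the presence of the common noise $B^0$: one has to double the space variable in the spirit of Lemma \ref{double poisson} and carefully track the joint continuity of the cross derivatives together with the H\"older-in-$y$ regularity allowed by the $\mathcal{M}_p$ class. Once these Wasserstein calculus estimates are in place, however, the identification $\dot u=-\mathcal{L}u$ and the bounds \eqref{u est}--\eqref{u est t-der} follow by standard arguments.
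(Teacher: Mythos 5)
Your proposal is correct and follows essentially the same route as the paper: the paper's proof of this theorem consists of checking that $\bar{\gamma},\bar{D},\bar{D}^{1/2},\Sigma$ inherit $\mathcal{M}_{b,L}^{\hat{\bm{\zeta}}}$ regularity from Proposition \ref{poisson for calculations} and (H6) (via smoothness of the matrix square root on uniformly positive matrices), and then invoking Theorem \ref{main appendix}, whose proof is exactly your program of defining $u$ by the probabilistic formula, differentiating the conditional flow and its tangent processes with BDG--Gr\"onwall estimates, composing with $G$ via the lifted chain rule, and using It\^o's formula for the PDE identification, uniqueness, and the bound \eqref{u est t-der} through $\dot u=-\L u$.
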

\begin{proof}
First we observe that $\overline{\gamma}, \overline{D}\in \mathcal{M}_p^{\hat{\bm{\zeta}}}$ follows from Proposition \ref{poisson for calculations}. In addition, since, by (H6), $\overline{D}$ is positive and the square root map is Fr\'echet differentiable up to arbitrary order with all derivatives being bounded on sets of uniformly positive matrices, we also have that $\overline{D}^{1/2}\in \mathcal{M}_p^{\hat{\bm{\zeta}}}$ (see also \cite{Chen} for a more detailed discussion). Now, the result follows from Theorem \ref{main appendix}.
\end{proof}

\section{Proof of the main result}
In this section we will prove the main result of this paper. For motivation purposes, we will first show a heuristic calculation which sketches the proof of the convergence described in section 1.2. We will be using the notation $\mu=\pi^1_*m$.

\subsection{Heuristic calculation}
We start by making the ansatz $u^{\ep}(t,m)=u(t,m)+\ep u_1(t,m)+\ep^2 u_2(t,m)$ in \eqref{MFC eq} and we collect the $\mathcal{O}(\frac{1}{\epsilon^2}),\mathcal{O}(\frac{1}{\epsilon})$ and $\mathcal{O}(1)$ terms. Consequently, we write
\begin{align*}
\frac{\L_0 u}{\ep^2}+\frac{ \L_1u+\L_0 u_1}{\ep}+ \mathcal{L}_0u_2+\mathcal{L}_1u_1+\mathcal{L}_2u-\L u
+\dot{u} +\L u=\mathcal{O}(\ep).
\end{align*}
We want the following relations to hold
\begin{align}
\mathcal{O}\bigg(\frac{1}{\epsilon^2}\bigg) &:\;\; \mathcal{L}_0u=0,\label{O(ep2)}\\
\mathcal{O}\bigg(\frac{1}{\epsilon}\bigg)&:\;\; \mathcal{L}_0u_1+\mathcal{L}_1u=0,\label{O(ep)}\\
\mathcal{O}(1)&: \;\; \mathcal{L}_0u_2=-\mathcal{L}_1u_1-\mathcal{L}_2u+\L u.\label{O(1)}
\end{align}
The first equation is satisfied when $\frac{\delta u}{\delta m}(t,m,x,y)$ is independent of $y$, so by Proposition \ref{Wass independence}, $u(t,m)=u(t,\pi^1_*m)$; that is, $u(\cdot,m)$ depends only on $\pi^1_*m$. The second equation, can be written as
$$\mathcal{L}_0u_1(t,m)=-\lambda\int_{\R^{2d}}\tilde{f}(x,y,m)\cdot D_m u(t,\pi^1_*m,x)m(dx,dy),$$
so we expect $u_1(t,m)=-\lambda\int_{\R^{2d}}y\cdot D_mu(t,\mu,x)\; m(dx,dy)$. Now, due to Lemma \ref{double poisson} there exists a solution $u_2$ to the third equation. Putting everything together, we deduce that $u$ satisfies
$$\dot{u}(t,\mu)+\L u (t,\mu)=\mathcal{O}(\ep),$$
thus, letting $\ep\rightarrow 0$, we deduce that $u^{\ep}$ converges to $u$, which satisfies the equation in \eqref{eq:W2CauchyProblem}.

\subsection{Proof of the main theorem}
We will construct the corrector $u_1$ first.

\begin{prop} \label{first corr}
Assume Hypotheses (H1)-(H6) and let $u$ be the classical solution of \eqref{eq:W2CauchyProblem} given by Theorem \ref{McKean-Vlasov thm}. Then, there exists a function $u_1: [0,T]\times \mathcal{A} \rightarrow \R$ such that 
\be \label{first corr eq}
\L_0 u_1(t,m)=-\L_1 u(t,m)
\ee
in the classical sense. Moreover, for every $m\in\mathcal{A}$ there exists a constant $C>0$ depending only on $T$ and $\pi_*^2m$ such that
\begin{align}
\sup_{t\in [0,T]}\bigg\{ |u_1(t,m)|+|\dot{u_1}(t,m)|+|\L_2 u_1(t,m)| \bigg\}\leq C ||G||_{\mathcal{M}_{b,L}^{\bm{\dot{\zeta}}}}.\label{first corr est}
\end{align}
\end{prop}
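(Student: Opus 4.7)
The plan is to mimic the heuristic of Section 5.1 and choose $u_1$ to be the functional lift of the pointwise cell solution $\Phi_k(x,y,\mu)=-\lambda y_k$ from \eqref{Phi}, namely
\[
u_1(t,m) := -\lambda \int_{\R^{2d}} y \cdot D_\mu u(t,\mu,x)\, m(dx,dy), \qquad \mu := \pi^1_* m,
\]
where $u$ is the classical solution of \eqref{eq:W2CauchyProblem} supplied by Theorem \ref{McKean-Vlasov thm}. The regularity $u \in \mathcal{M}_{b,L}^{\bm{\dot{\zeta}}}$ supplies bounded $D_\mu u$, $D_xD_\mu u$, $D^2_\mu u$, $D_xD^2_\mu u$ (and their time derivatives), so that $u_1$, $\dot u_1$, and all functional derivatives entering $\L_0 u_1$ and $\L_2 u_1$ are classically well defined.

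The verification of \eqref{first corr eq} rests on the fact that $u(t,m)$ depends on $m$ only through $\mu=\pi^1_*m$. A direct chain-rule computation then yields the decomposition
\[
\frac{\delta u_1}{\delta m}(t,m,x_0,y_0) \;=\; -\lambda\, y_0 \cdot D_\mu u(t,\mu,x_0) + R(t,m,x_0),
\]
where the remainder $R$ collects all contributions from varying $\mu$ inside $D_\mu u(t,\mu,\cdot)$ and is independent of $y_0$. Hence $D_{y_0}\frac{\delta u_1}{\delta m}(t,m,x_0,y_0) = -\lambda\, D_\mu u(t,\mu,x_0)$ and $D^2_{y_0 y_0}\frac{\delta u_1}{\delta m}=0$. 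A parallel analysis of $\frac{\delta^2 u_1}{\delta m^2}(t,m,x_0,y_0,x_0',y_0')$ shows that it is affine in $y_0$ and $y_0'$ separately but contains no joint cross term, so $D^2_{y_0 y_0'}\frac{\delta^2 u_1}{\delta m^2}=0$. Plugging these identities into \eqref{L0} annihilates every $D^2_{yy}$ and $D^2_{yy'}$ contribution and leaves only the drift against $\tilde f$:
\[
\L_0 u_1(t,m) \;=\; -\lambda \int_{\R^{2d}} f(x,y,\mu) \cdot D_\mu u(t,\mu,x)\, m(dx,dy).
\]
The same $y$-independence of $\delta u/\delta m$ collapses the double-integral and $D_y$ terms in \eqref{L1}, so the right-hand side above is exactly $-\L_1 u(t,m)$, establishing \eqref{first corr eq}.

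The estimates in \eqref{first corr est} are obtained by bounding each quantity directly from the formula for $u_1$. For $|u_1(t,m)|$ we use $|u_1| \le \lambda\|D_\mu u\|_\infty \int |y|\, m(dx,dy)$, controlled by the first moment of $\pi^2_* m$ and by \eqref{u est}. For $|\dot u_1(t,m)|$ we use $\dot u_1(t,m) = -\lambda \int y \cdot D_\mu \dot u(t,\mu,x)\, m(dx,dy)$ together with the bound on $D_\mu \dot u$ from \eqref{u est t-der}. For $|\L_2 u_1(t,m)|$ we expand $\delta u_1/\delta m$ and $\delta^2 u_1/\delta m^2$ by chain rule and substitute into \eqref{L2}; each resulting integrand is a sum of terms that are linear in $y$ (or jointly linear in $(y,y')$ after the double integration) multiplied by bounded coefficients of \eqref{eq:slow-fastMcKeanVlasov} and by derivatives of $u$ of total order at most four. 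This is precisely why the hypothesis $G\in\mathcal{M}_{b,L}^{\bm{\dot{\zeta}}}$ with $|\bm{\dot{\zeta}}|\le 4$ is imposed: Theorem \ref{McKean-Vlasov thm} then bounds those derivatives of $u$ in terms of $\|G\|_{\mathcal{M}_{b,L}^{\bm{\dot{\zeta}}}}$, while the finiteness of all moments of $\pi^2_* m$ (since $m\in\mathcal{A}$) absorbs the polynomial growth in $|y|$.

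The main obstacle is the careful bookkeeping in the chain-rule expansion of $\delta^2 u_1/\delta m^2$: one must confirm the absence of $y_0 y_0'$ cross terms, on which the cancellation $\L_0 u_1 = -\L_1 u$ depends, and then track through the expansion of $\L_2 u_1$ to ensure that no derivative of $u$ of order exceeding four is ever required. Once this organization is in place, both \eqref{first corr eq} and \eqref{first corr est} follow by assembling the bounds term by term from Theorem \ref{McKean-Vlasov thm} and the moment property of $\mathcal{A}$.
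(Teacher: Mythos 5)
Your proposal is correct and follows essentially the same route as the paper: the same choice of $u_1$, the same chain-rule computation showing $\delta u_1/\delta m$ is affine in $y$ with no $yy'$ cross term in $\delta^2 u_1/\delta m^2$, and the same use of the $y$-independence of $\delta u/\delta m$ to identify $\L_0 u_1$ with $-\L_1 u$. The only cosmetic difference is that the paper packages the estimates \eqref{first corr est} as an application of Lemma \ref{estimates} with $\chi_1(t,x,x',y,y',\mu)=y\cdot D_m u(t,\mu,x)$, whereas you carry out the equivalent term-by-term bounds directly.
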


\begin{proof}
First, we note that since $u(t,m)$ depends only on the $x-$marginal of $m$, by Proposition \ref{Wass reg} $\frac{\delta u}{\delta m}(t,m,x)$ is independent of $y$, therefore $\L_1u= \lambda\int_{\R^{2d}} f(x,y,\mu)\cdot D_{\mu}u(t,\mu,x)m(dx,dy)$. For $(t,m)\in [0,T]\times \Joint$ we define $u_1$ to be such that
$$u_1(t,m)= -\lambda\int_{\R^{2d}}y\cdot D_m u(t,\pi_*^1m,x)\; m(dx,dy).$$
By \cite{CarmonaDelarue} (section 5.2.2 Example 3) and Proposition \ref{Wass reg}, since the dependence of the integrand with respect to the measure is on $\pi^1_*m$, we have
\be\nonumber
\frac{\delta u_1}{\delta m}(t,m,x,y)= -\lambda y\cdot D_m u(t,\pi_*^1m,x)-\lambda \int_{\R^{2d}} 
y'\cdot \frac{\delta (D_m u) }{\delta m}(t,x',\pi^1_*m,x)\; m(dx',dy'),
\ee
while $\frac{\delta^2 u_1}{\delta m^2}(t,m,x,y,x',y')$ has no term depending on both $y$ and $y'$. So, using \eqref{L0}, we get $\L_0 u_1=-\lambda\int_{\R^{2d}}f(x,y,\pi^1_*m)\cdot D_m u(t,\pi^1_*m,x)m(dx,dy)=-\L_1u(t,m)$, which is \eqref{first corr eq}.\\
To prove \eqref{first corr est} we will apply Lemma \ref{estimates} for $\chi_1(t,x,x',y,y',\mu):=y\cdot D_mu(t,\mu,x)$; that is $\chi_1$ is constant in $x',\; y'$. We observe, by Theorem \ref{McKean-Vlasov thm} that $\chi_1\in \mathcal{M}_p^{\bm{\zeta}_1}([0,T]\times \R^d\times\R^d\times\Prob;\R)$, where $\bm{\zeta_1}$ is a set of multi-indices containing all $(n,l,\bm{\beta})$ with $|(n,l,\bm{\beta})|\leq 3$, as as indicated by Theorem \ref{McKean-Vlasov thm}. In addition, by \eqref{u est}, $C_{\chi_1}= C(T)||G||_{\mathcal{M}_p^{\bm{\dot{\zeta}}}}$. Having this in mind, estimate \eqref{first corr eq} follows from Lemma \ref{estimates} and Remark \ref{Remark 1}.
\end{proof}

%first note the that, due to the regularity of $u$ given by Theorem \ref{McKean-Vlasov thm},
%\begin{align*}
%|\dot{u_1}(t,m)|&=\bigg|\int_{\R^{2d}}y\cdot D_m \dot{u}(t,\pi^1_*m,x)m(dx,dy)\bigg |\\
%&\leq C||G||_{\mathcal{M}_{b,L}^{\bm{\dot{\zeta}}}}\int_{\R^{2d}}|y|m(dx,dy)\leq C||G||_{\mathcal{M}_b^{\bm{\dot{\zeta}}}}\int_{\R^{2d}}|y|\pi^2_*m(dy),
%\end{align*}
%which is finite since $m\in\mathcal{A}$. Similarly we get a bound for $|u_1(t,m)|$. On the other hand, using the formulas for $\frac{\delta u_1}{\delta m}$ and $\frac{\delta^2 u_1}{\delta m^2}$ we can write $\L_2$ as a sum of two terms. We will treat the two terms separately. The first term, is equal to
%\begin{align*}
%\int_{\R^{2d}}\bigg( \tilde{c}(x,y,m)\cdot D_x( y\cdot D_mu(t,\pi^1_*m,&x)) + \frac{\tilde{\sigma\sigma^{\top}}}{2}(x,y,m):D^2_{xx}( y\cdot D_mu(t,\pi^1_*m,x)) \\
%& +\frac{\tilde{\sigma^0_1\sigma^{0\top}_1}}{2}(x,y,m):D^2_{xx}( y\cdot D_mu(t,\pi^1_*m,x))\bigg) m(dx,dy),
%\end{align*}
%which, by hypothesis (H5) and Propositions \ref{McKean-Vlasov thm} (regularity estimate for $u$) can be bounded (in absolute value) by
%$$C||G||_{\mathcal{M}_b^{\bm{\dot{\zeta}}}}\int_{\R^{d}}(1+|y|^{p_{c}+1}+|y|^{p_{\sigma}+1})\pi_2^*m(dy).$$
%The second term can be bounded similarly.

Next, we will construct the second corrector $u_2$.

\begin{prop} \label{second corr}
Suppose that hypotheses (H1)-(H6) hold and let $u, u_1$ be as in Proposition \ref{first corr}. Then, there exists a function $u_2: [0,T]\times\mathcal{A} \rightarrow \R$ such that 
\be \label{second corr eq}
\L_0 u_2(t,m)= -\L_1u_1-\L_2u +\L u.
\ee
Furthermore, for every $m\in\mathcal{A}$ there exists a constant $C>0$ depending only on $T$ and $\pi^2_*m$ such that 
\be \label{second corr est}
\sup_{\substack{t\in [0,T], \\ m\in\mathcal{A}}}\bigg\{ |u_2(t,m)|+|\dot{u}_2(t,m)|+ |\L_1 u_2(t,m)|+|\L_2 u_2(t,m)| \bigg\}\leq C||G||_{\mathcal{M}_{b,L}^{\bm{\dot{\zeta}}}}.
\ee
\end{prop}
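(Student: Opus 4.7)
The plan is to show that the right-hand side of \eqref{second corr eq} can be cast in the form required by Lemma \ref{double poisson}, construct $u_2$ via that lemma, and deduce the bounds \eqref{second corr est} from Lemma \ref{estimates} together with Remark \ref{Remark 1}. The first step is to insert the explicit form $u_1(t,m)=-\lambda\int_{\R^{2d}} y\cdot D_m u(t,\mu,x)\,m(dx,dy)$ from the proof of Proposition \ref{first corr}, together with the fact that $u(t,m)=u(t,\mu)$ depends only on $\mu=\pi^1_*m$, into the expressions \eqref{L1}, \eqref{L2} and \eqref{operator}. After rewriting every single-integral contribution as a double integral by trivially inserting $\int_{\R^{2d}} m(dx',dy')=1$, I obtain a representation
$$-\L_1 u_1(t,m)-\L_2 u(t,m)+\L u(t,m)=\int_{\R^{2d}}\int_{\R^{2d}}\varphi(t,x,x',y,y',\mu)\,m(dx,dy)\,m(dx',dy'),$$
where, by Proposition \ref{poisson for calculations} and the regularity of $u$ provided by Theorem \ref{McKean-Vlasov thm}, the integrand $\varphi$ belongs to $\mathcal{M}_p^{\bm{\zeta}}([0,T]\times\R^{2d}\times\R^{2d}\times\Prob;\R)$.

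The main analytic content is the verification of the centering condition $\int_{\R^{2d}}\varphi(t,x,x',y,y',\mu)\,\Pi_{x,x',\mu}(dy,dy')=0$ demanded by Lemma \ref{double poisson}. I will split $\varphi$ into terms that depend on only one of $y,y'$ (the contributions coming from the $y$-dependent coefficients $c,\sigma,\tau_1$, from $\tilde f\cdot D_x\frac{\delta u_1}{\delta m}$ and $\tilde g\cdot D_y\frac{\delta u_1}{\delta m}$, and from the averaged drift and diffusion appearing in \eqref{operator}) and terms that depend on both $y$ and $y'$ (the contributions involving $\sigma_1^0\sigma_1^{0\top}$ in $\L_2 u$, $\sigma_1^0\sigma_2^{0\top}$ and $\sigma_2^0\sigma_1^{0\top}$ in $\L_1 u_1$, and $\Sigma\Sigma^\top$ in $\L u$). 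Because $\Pi_{x,x',\mu}$ has marginals $\pi_{x,\mu}$ and $\pi_{x',\mu}$, the first group is to be integrated against $\pi_{x,\mu}(dy)$ alone, and the definitions \eqref{eq:limitingcoefficients}--\eqref{eq:averagedlimitingcoefficients} together with the identity $L_0^{x,\mu}y=f(x,y,\mu)$ (equivalent to $\Phi=-\lambda y$ being the solution of \eqref{Phi}, which in particular forces $\bar f=0$) produce exactly the averaged drift $\bar\gamma=\bar c-\lambda\bar g$ and diffusion $\bar D$ appearing in $\L u$, yielding cancellation. For the second group, since $\sigma_1^0,\sigma_2^0$ are independent of $y$, the $\Pi$-integration acts on functions constant in $(y,y')$, and the algebraic identity $\Sigma\Sigma^\top=(\sigma_1^0-\lambda\sigma_2^0)(\sigma_1^0-\lambda\sigma_2^0)^\top$ closes the cancellation.

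Once the centering condition is verified, Lemma \ref{double poisson} delivers $u_2$ in the double-integral form \eqref{definition of v} through an auxiliary Poisson equation for the extended operator $L_{0,2}^{x,x',\mu}$; the regularity results from Theorem \ref{McKean-Vlasov thm} (in particular \eqref{u est} and \eqref{u est t-der}) and Proposition \ref{first corr} ensure that the underlying kernel $\chi$ lies in $\mathcal{M}_p^{\bm{\zeta}}$ with $C_\chi$ a multiple of $\|G\|_{\mathcal{M}_{b,L}^{\bm{\dot{\zeta}}}}$. The estimates \eqref{second corr est} then follow immediately from Lemma \ref{estimates} and Remark \ref{Remark 1}. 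I expect the principal obstacle to be the term bookkeeping in the centering step: computing $\frac{\delta u_1}{\delta m}$ and $\frac{\delta^2 u_1}{\delta m^2}$ via the chain rule through $\mu=\pi^1_*m$ yields several pieces that must be matched against the single- and double-integral contributions of $\L_2 u$ and $\L u$, but each individual cancellation reduces, via \eqref{eq:limitingcoefficients}--\eqref{eq:averagedlimitingcoefficients} and the simple form $\Phi=-\lambda y$, to a routine algebraic identity.
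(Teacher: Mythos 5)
Your overall architecture (rewrite $-\L_1u_1-\L_2u+\L u$ as a double integral, verify the centering condition, invoke Lemma \ref{double poisson}, then get \eqref{second corr est} from Lemma \ref{estimates} and Remark \ref{Remark 1}) is the paper's, but your verification of the centering condition has a genuine gap. Expanding $\L_1 u_1$ requires $D_x\frac{\delta u_1}{\delta m}$, and since $\frac{\delta u_1}{\delta m}(t,m,x,y)=-\lambda y\cdot D_m u(t,\mu,x)-\lambda\int y'\cdot\frac{\delta(D_mu)}{\delta m}(t,x',\mu,x)\,m(dx',dy')$, the contribution of $\tilde f\cdot D_x\frac{\delta u_1}{\delta m}$ produces, besides the single-variable piece you account for, the cross term $\lambda^2\int\!\!\int f(x,y,\mu)\cdot D^2_{\mu\mu}u(t,\mu,x,x')\,y'\,m(dx',dy')m(dx,dy)$. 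This term depends on both $y$ and $y'$ in a non-constant way, so it belongs to neither of your two groups, and it is \emph{not} individually centered: since $L_{0,2}^{x,x',\mu}(y_iy'_j)=f_i(x,y,\mu)y'_j+y_if_j(x',y',\mu)+[\sigma_2^0(x,\mu)\sigma_2^{0\top}(x',\mu)]_{ij}$, integrating against $\Pi_{x,x',\mu}$ shows the cross moments of $f$ and $y'$ under $\Pi_{x,x',\mu}$ equal $-\sigma_2^0\sigma_2^{0\top}$ rather than $0$ (here $\bar f=0$ does not help, because $\Pi_{x,x',\mu}$ is not the product $\pi_{x,\mu}\otimes\pi_{x',\mu}$ --- exactly the issue stressed in Appendix C). Relatedly, your claim that $\Sigma\Sigma^\top=(\sigma_1^0-\lambda\sigma_2^0)(\sigma_1^0-\lambda\sigma_2^0)^\top$ ``closes the cancellation'' of the second group is incomplete: the $\lambda^2\sigma_2^0\sigma_2^{0\top}$ part of $\Sigma\Sigma^\top$ has no counterpart in $\L_2u$ or $\L_1u_1$ and survives as the residue $\frac{\lambda^2}{2}\int\!\!\int\sigma_2^0(x,\mu)\sigma_2^{0\top}(x',\mu):D^2_{\mu\mu}u\,\mu(dx)\mu(dx')$.

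The fix --- which is how the paper proceeds --- is to treat these two leftover terms together rather than term by term: after symmetrizing the $f$-cross term using the symmetry $D^2_{\mu\mu}u(t,\mu,x,x')=D^2_{\mu\mu}u(t,\mu,x',x)^\top$, their sum is exactly $\int\!\!\int L_{0,2}^{x,x',\mu}u_{2,6}\,m\,m$ with the explicit corrector $u_{2,6}(t,\mu,x,x',y,y')=\frac{\lambda^2}{2}\,y\cdot D^2_{\mu\mu}u(t,\mu,x,x')\,y'$; being in the range of $L_{0,2}^{x,x',\mu}$, this combined integrand is centered, and $u_{2,6}$ is the corresponding piece of $u_2$. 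The remaining (genuinely single-variable) terms are centered as you say and are handled by Proposition \ref{poisson for calculations}. With this correction, your derivation of \eqref{second corr est} from Lemma \ref{estimates}, Remark \ref{Remark 1}, and the regularity supplied by Theorem \ref{McKean-Vlasov thm} is sound.
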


\begin{proof}
After plugging in \eqref{L1}, \eqref{L2}, \eqref{operator} and the formula for $u_1$ in the right hand side of \eqref{second corr eq} a straightforward simplification yields
\begin{align}
&\;\;\;\;\int_{\R^{2d}}(\bar{c}(x,\mu)-c(x,y,\mu))\cdot D_mu (t,\mu,x) m(dx,dy)\label{5.8}\\
&+\lambda\int_{\R^{2d}}(g(x,y,\mu)-\bar{g}(x,\mu))\cdot D_{\mu}u(t,m,x) m(dx,dy) \\
&+\int_{\R^{2d}}\bigg(\frac{\bar{\sigma\sigma^\top}(x,\mu)-\sigma\sigma^\top(x,y,\mu)}{2}\bigg):D_xD_{\mu}u(t,\mu,x)m(dx,dy)\\
&+\lambda^2\int_{\R^{2d}} \bigg ( \frac{y\otimes f(x,y,\mu)+(y\otimes f(x,y,\mu))^\top-\bar{y\otimes f(x,y,\mu)+(y\otimes f(x,y,\mu))^\top}}{2} \bigg )\\
&\hspace{10cm}:D_xD_{\mu}u(t,\mu,x) m(dx,dy)\nonumber\\
&+\lambda\int_{\R^{2d}}\bigg( \frac{\tau_1\sigma^\top+\sigma\tau_1^\top}{2}(x,y,\mu)-\bar{\frac{\tau_1\sigma^\top+\sigma\tau_1^\top}{2}}(x,\mu)\bigg) :D_xD_{\mu}u(t,\mu,x)m(dx,dy)\\
&+\lambda ^2\int_{\R^{2d}}\int_{\R^{2d}}f(x,y,\mu)\cdot D^2_{\mu\mu}u(t,\mu,x,x')y'\;m(dx',dy')m(dx,dy)\label{5.14}\\
&+\frac{\lambda ^2}{2}\int_{\R^{2d}}\int_{\R^{2d}}\sigma^0_2(x,\mu)\sigma^{0\top}_2(x',\mu):D^2_{\mu\mu}u(t,
\mu,x,x')\mu(dx)\mu(dx').\label{5.15}
\end{align}

We note that all the integrands satisfy \eqref{condition} except for the last two. Hence, by Proposition \ref{poisson for calculations}, there exist $u_{i,2}\in\mathcal{M}_p^{\bm{\hat{\zeta}}}(\R^d\times\R^d\times \Prob; \R^d)$, $i\in \{1,2\}$ and $u_{i,2}\in\mathcal{M}_p^{\bm{\hat{\zeta}}}(\R^d\times\R^d\times \Prob; \R^{d\times d})$, $i=3,4,5$ such that
\begin{align}
&L_{0,2}^{x,x',\mu}u_{2,1}=\bar{c}(x,\mu)-c(x,y,\mu)\\
&L_{0,2}^{x,x',\mu}u_{2,2}=\lambda (g(x,y,\mu)-\bar{g}(x,\mu) )\\
&L_{0,2}^{x,x',\mu}u_{2,3}=\lambda^2\frac{y\otimes f(x,y,\mu)+(y\otimes f(x,y,\mu))^\top-\bar{y\otimes f(x,y,\mu)+(y\otimes f(x,y,\mu))^\top}}{2}\\
&L_{0,2}^{x,x',\mu}u_{2,4}=\frac{\bar{\sigma\sigma^\top}(x,\mu)-\sigma\sigma^\top(x,y,\mu)}{2}\\
&L_{0,2}^{x,x',\mu}u_{2,5}=\lambda\frac{\tau_1\sigma^\top+\sigma\tau_1^\top}{2}(x,y,\mu)-\lambda\bar{\frac{\tau_1\sigma^\top+\sigma\tau_1^\top}{2}}(x,\mu).
\end{align}
We also, have for $u_{2,6}(t,\mu,x,x',y,y')=\frac{\lambda ^2}{2}y\cdot D^2_{\mu\mu}u(t,\mu,x,x')y'$
\begin{align*}
L_{0,2}^{x,x',\mu} u_{2,6}= &\frac{\lambda ^2}{2}f(x,y,\mu)\cdot D^2_{\mu\mu}u(t,\mu,x,x')y'+\frac{\lambda ^2}{2}y\cdot D^2_{\mu\mu}u(t,\mu,x,x')f(x',y',\mu)\\
&+\frac{\lambda ^2}{2}\sigma_2^0(x,\mu)\sigma_2^{0\top}(x',\mu):D^2_{\mu\mu}u(t,\mu,x,x').
\end{align*}
Note that from Theorem \ref{McKean-Vlasov thm}, it follows that $u_{2,6}\in \mathcal{M}_p^{\bm{\zeta}}([0,T]\R^{2d}\times\R^{2d}\times\Prob;\R)$ with $\bm{\zeta}$ as described in section 3. In addition, since $D^2_{\mu\mu}u$ is symmetric with respect to $x,x'$ (Lemma 2.4 from \cite{Cardaliaguet master eq}), we deduce that $\int_{\R^{4d}}L_{0,2}^{x,x',\mu} u_{2,6}$ is equal to the sum of the quantities \eqref{5.14} and \eqref{5.15}. For $(t,m)\in [0,T]\times\mathcal{A}$ we now define
\begin{align} 
u_2(t,m&):=\int_{\R^{4d}} u_{2,6}(t,\mu,x,x',y,y')m(dx,dy)m(dx',dy')\nonumber\\
&+ \sum_{i=1}^2 \int_{\R^{2d}} u_{2,i}(\mu,x,y)\cdot D_{\mu}u(t,\mu,x)m(dx,dy)\nonumber\\
&+\sum_{i=3}^5 \int_{\R^{2d}} u_{2,i}(\mu,x,y):D_xD_{\mu}u(t,\mu,x)m(dx,dy).\label{second corrector}
\end{align}
By repeating the calculation from Lemma \ref{double poisson}, due to the construction of $u_{2,i}$, $i\in \{1,2,...,6\}$ we deduce that
$\L_0 u_2$ is equal to the sum of the quantites \eqref{5.8}-\eqref{5.15}. Therefore, we derive \eqref{second corr eq}.\\
The estimate \eqref{second corr est}, follows from Lemma \ref{estimates}. Indeed, due to the regularity of the $u_{2,i},\; i=1,2,...,6$ and Theorem \ref{McKean-Vlasov thm}, Lemma \ref{estimates}
can be applied to every term of $u_2$. The quantity $C||G||_{\mathcal{M}_{b,L}^{\bm{\dot{\zeta}}}}$ appears on the right hand side of \eqref{second corr est} as in the proof of Proposition \ref{first corr} because of Remark \ref{Remark 1}.
\end{proof}

We are now ready to prove Theorem \ref{main}

\begin{proof}
We will formalize the heuristic argument provided in the previous subsection. Let $u$ be the unique classical solution of \eqref{eq:W2CauchyProblem}. We consider
\be \label{vep}
v^{\ep}(t,m)= u(t,\pi^1_*m)+\ep u_1(t,m)+\ep^2 u_2(t,m),
\ee
where $u_1,u_2$ are given by Propositions \ref{first corr} and \ref{second corr}. We will exploit the fact that $v^{\ep}$
is ``almost'' a solution to \eqref{MFC eq}, as the heuristic calculation in section 5.1 indicates.\\

Let $m\in \mathcal{A}$ and $t\in [0,T]$. For $s\in [t,T]$, we denote by $m_s$ the conditional joint law $\mathcal{L}((X_s^{\ep},Y_s^{\ep})^{t,m}|\F^0_s)$ of the pair $(X_t^{\ep},Y_t^{\ep})$ satisfying \eqref{eq:slow-fastMcKeanVlasov} given that the initial distribution (at $s=t$) is $m$, and we also set $\mu_s=\pi^1_*m_s$. Note that since $m\in \mathcal{A}$, by Proposition \ref{inv est}, we deduce that $m_s\in\mathcal{A}$, $\mathbb{P}^0$-a.s for any $s\in [t,T]$ and $\ep\in (0,1)$. We observe that by the construction of $v^{\ep}$ and It\^o's formula (Theorem 4.14 of \cite{carmdel 2}) we have
\begin{align*}
\mathbb{E}^0(v^{\ep}(T,m_T))&= \mathbb{E}^0( v^{\ep}(t,m))+\mathbb{E}^0\bigg[\int_t^T\bigg(\dot{v}^{\ep}(s,m_s)+\L^{\ep} v^{\ep}(s,m_s)\bigg)ds\bigg]\\
&= \mathbb{E}^0( v^{\ep}(t,m))+ \mathbb{E}^0\bigg[\int_t^T\bigg(\dot{u}(s,\mu_s)+\L u(s,\mu_s)\bigg)ds\bigg]\\
&+\mathbb{E}^0\bigg[ \int_t^T( \ep \dot{u}_1(s,m_s)+\ep^2 \dot{u}_2(s,m_s) )ds   \bigg]\\
&+\mathbb{E}^0\bigg[\int_t^T\bigg( \ep^2 \L_2 u_2(s,m_s)+\ep \L_1 u_2(s,m_s) +\ep\L_2 u_1(s,m_s)\bigg) ds\bigg].
\end{align*}
Hence, since $u$ satisfies \eqref{eq:W2CauchyProblem},
\begin{align}
\mathbb{E}^0[v^{\ep}(T,m_T)-v^{\ep}(t,m)]=\mathbb{E}^0\bigg[\int_t^T\bigg( \ep \dot{u}_1+\ep^2 \dot{u}_2+\ep^2 \L_2 u_2+\ep \L_1 u_2 +\ep\L_2 u_1\bigg)(s,m_s) ds\bigg].
\end{align}
Now, by Proposition \ref{inv est}, $m_s\in\mathcal{A}$, so using the estimates \eqref{first corr est} and \eqref{second corr eq}, we deduce
\be \label{useful}
\bigg|\mathbb{E}^0\bigg[ v^{\ep}(T,m_T)-v^{\ep}(t,m) \bigg]\bigg|\leq \ep C(T)||G||_{\mathcal{M}_b^{\bm{\dot{\zeta}}}},
\ee
where $C(T)$ is some constant depending on $T$ and $\pi^2_*m$. Finally, it follows from the definition of $u,\; v^{\ep}$ and the triangle inequality that 
\begin{align*}
\bigg|&\mathbb{E}^0\bigg[G(\mu_T)-G(\L(X_T^{t,\mu}|\F_T^0))\bigg]\bigg|= \bigg|\mathbb{E}^0\bigg[u(T,\mu_T)-u(t,\mu)\bigg]\bigg| \\
&\leq \bigg|\mathbb{E}^0\bigg[v^{\ep}(T,m_T)-v^{\ep}(t,m)\bigg]\bigg|+\bigg|\mathbb{E}^0\bigg[\ep u_1(T,m_T)-\ep u_1(t,m) +\ep^2 u_2(T,m_T)-\ep^2 u_2(t,m)\bigg]\bigg|\\
&\leq \ep C(T)||G||_{\mathcal{M}_b^{\bm{\zeta}}},
\end{align*}
where in the last inequality we used \eqref{useful}, \eqref{first corr est} and \eqref{second corr est}. Thus, we derive \eqref{final est}. The proof is complete.   
\end{proof}

\appendix\section {The Wasserstein space and differentiation}

This section is devoted to the Wasserstein space and the notion of differentiation defined on it. We will focus on the Wasserstein space $\mathcal{P}_2$, even though the results can be extended for $\mathcal{P}_p$, $p>0$. Let $m_1,m_2\in\mathcal{P}_2(\R^n)$ the metric we will be using is given by 
$$\dw (m_1,m_2)=\inf_{\substack{ \gamma\in \mathcal{P}_2(\R^n\times\R^n)\\ \pi^1_*\gamma=m_1,\; \pi^2_*\gamma=m_2}}\int_{\R^{2n}}|x-y|^2\gamma(dx,dy).$$

\begin{defn} \label{Wass}
Let $U:\mathcal{P}_2(\R^n)\rightarrow \R$, $n\geq 1$. \\
(i) We say that $U$ is a $C^1$ function if there exists a continuous map $K:\mathcal{P}_2(\R^n)\times \R^n\rightarrow \R$ such that for all $m_1,m_2\in \mathcal{P}_2(\R^n)$ we have $\int_{\R^n}K(m_1,x)m_2(dx)<\infty$ and
\be \nonumber
\lim_{t\rightarrow 0} \frac{ U((1-t)m_1+tm_2)-U(m_1)}{t}=\int_{\R^n}K(m_1,x)(m_2-m_1)(dx).
\ee
We will use the symbol $\frac{\delta U}{\delta m}(m,x)$ for the map $K(m,x)$, which is called the Wasserstein flat derivative.\\
(ii) If $U$ is of class $C^1$ and $\frac{\delta U}{\delta m}$ is $C^1$ with respect to the space variable, we define the intrinsic derivative (Wasserstein derivative) $D_mU: \mathcal{P}_2(\R^n)\times \R^n\rightarrow \R^n$ as
$D_mU(m,x):=D_x\frac{\delta U}{\delta m}(m,x).$
\end{defn}
\vspace{1mm}

\begin{rmk} \label{Wass rem}
(i) Note that in the first part of the above definition, the Wasserstein flat derivative is defined up to a constant. \\
(ii) We can adopt the same definition for a function $U$ defined on a convex subset of $\mathcal{P}_2(\R^n)$ with values in $\R$. In particular, $U$ may be defined on the set $\mathcal{A}$ we introduced in section 1.1.\\
(iii) In our assumptions (section 2), due to the regularity, our intrinsic derivative coincides with the Lions derivative (introduced by Lions in his lectures at Coll\`ege de France \cite{Lions}) used in \cite{Rockner1, Li, Hong1,crisan}. Indeed, since the functions $h(x,y,\mu)$ we will be working with have $C^1$ and bounded (with respect to $x'$) Wasserstein flat derivative $\frac{\delta h}{\delta m}(x,y,\mu,x')$ and the intrinsic derivative $D_mh(x,y,\mu,x')$ is jointly continuous, by Proposition 5.48 of \cite{CarmonaDelarue}, they have a Lions derivative which coincides with the Wasserstein instrinsic derivative. Conversely, any function $h(\mu)$ with a continuous and uniformly (in $\mu$) Lipschitz Lions derivative $\partial_{\mu}h(\mu)(x')$, by Proposition 5.51 of \cite{CarmonaDelarue} has a Wasserstein flat derivative $\frac{\delta h}{\delta m}(\mu,x')$.
\end{rmk}

\vspace{2mm}

We continue with some properties of the functions defined over $\mathcal{P}_2(\R^n)$, $n\geq 1$, in relation to composition with projections.
Let $\pi^1,\pi^2: \R^d\times\R^d\rightarrow \R^d$ be the projections $\pi^1(x_1,x_2)=x$ and $\pi^2(x_1,x_2)=x_2$. It known that if $m\in\Joint$, then $\pi^1_*m$ and $\pi^2_*m$ are the $x$ and $y$ marginals, respectively. In particular, $\pi^i_*: \Joint\rightarrow \Prob$. 

\begin{prop} \label{Wass est}
Let $m_1,m_2\in \Joint$. Then, the following inequality is true
$$
\max\big\{\dw(\pi^1_*m_1,\pi^1_*m_2),\dw(\pi^2_*m_1,\pi^2_*m_2)\big \}\leq \dw(m_1,m_2).$$%\leq \dw(\pi^1_*m_1,\pi^1_*m_2)+\dw(\pi^1_*m_1,\pi^1_*m_2).$$
\end{prop}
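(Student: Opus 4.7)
The plan is the standard pushforward-under-a-$1$-Lipschitz-map argument. I would prove the inequality for the first marginal ($i=1$); the case $i=2$ is identical by symmetry.

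First, since $m_1,m_2\in\mathcal{P}_2(\R^d\times\R^d)$ have finite second moments, by standard optimal transport theory the infimum in the definition of $\dw(m_1,m_2)$ is attained: there exists an optimal coupling $\gamma\in\mathcal{P}_2(\R^{2d}\times\R^{2d})$ with $\pi^1_*\gamma=m_1$, $\pi^2_*\gamma=m_2$ and
\[
\int_{\R^{2d}\times\R^{2d}}\bigl|(x,y)-(x',y')\bigr|^2\,\gamma\bigl(d(x,y),d(x',y')\bigr)=\dw(m_1,m_2).
\]
(One could also avoid appealing to existence of an optimizer by working with a minimizing sequence and passing to the limit; the argument below is unchanged.)

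Next, I would introduce the map $\Pi:\R^{2d}\times\R^{2d}\to\R^d\times\R^d$ defined by $\Pi((x,y),(x',y'))=(x,x')$, i.e.\ applying $\pi^1$ to each of the two factors. The key computation is to check that the pushforward $\bar\gamma:=\Pi_*\gamma$ is a coupling of $\pi^1_*m_1$ and $\pi^1_*m_2$: for any Borel $A\subset\R^d$, by definition of pushforward and the fact that $\pi^1\circ\Pi=\pi^1\circ(\text{first factor})$, one has
\[
\bar\gamma(A\times\R^d)=\gamma\bigl((A\times\R^d)\times\R^{2d}\bigr)=m_1(A\times\R^d)=(\pi^1_*m_1)(A),
\]
and analogously $\bar\gamma(\R^d\times A)=(\pi^1_*m_2)(A)$.

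Finally, since $\pi^1$ is $1$-Lipschitz, $|x-x'|^2\leq|(x,y)-(x',y')|^2$ pointwise, so by the change-of-variables formula
\[
\dw(\pi^1_*m_1,\pi^1_*m_2)\leq\int_{\R^d\times\R^d}|x-x'|^2\,\bar\gamma(dx,dx')=\int_{\R^{2d}\times\R^{2d}}|x-x'|^2\,\gamma\leq\dw(m_1,m_2).
\]
Taking the maximum over the two analogous estimates for $\pi^1$ and $\pi^2$ yields the claim. There is no serious obstacle here: the only subtleties are a clean verification that $\Pi_*\gamma$ has the correct marginals and noting that the projections are $1$-Lipschitz; both are routine.
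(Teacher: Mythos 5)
Your proof is correct and follows essentially the same route as the paper: project a coupling of $m_1,m_2$ onto the $x$-coordinates, note that the projected measure couples $\pi^1_*m_1$ and $\pi^1_*m_2$, and use $|x-x'|^2\le|(x,y)-(x',y')|^2$ before taking the infimum. The only cosmetic difference is that you invoke an optimal coupling while the paper argues with an arbitrary coupling and then infimizes, which you yourself note is an equivalent variant.
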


\begin{proof}
For any $\gamma\in \mathcal{P}_2(\R^d\times\R^d\times\R^d\times\R^d)$ with marginals $m_1$ and $m_2$ we have
  \begin{align*}
\int |(x_1,y_1)-(x_2,y_2)|^2 \gamma(dx_1,dx_2,dy_1,dy_2)&\geq \int |x_1-x_2|^2\gamma(dx_1,dx_2,dy_1,dy_2)\\
&=\int |x_1-x_2|^2\gamma_1(dx_1,dx_2)\geq \dw(\pi^1_*m_1,\pi^1_*m_2),
 \end{align*}
where $\gamma_1$ has marginals $\pi^1_*m_1$ and $\pi^1_*m_2$. Therefore, $\dw(m_1,m_2)\geq \dw(\pi^1_*m_1,\pi^1_*m_2)$. Similarly, we can show that $\dw(m_1,m_2)\geq \dw(\pi^2_*m_1,\pi^2_*m_2)$.\\
%For the second inequality, keeping the same notation, we have
%\begin{align*}
%\dw(m_1,m_2)&\leq \int |(x_1,y_1)-(x_2,y_2)|^2 \gamma(dx_1dx_2dy_1dy_2)\\
%&=\int |x_1-x_2|^2\gamma_1(dx_1dx_2)+\int |y_1-y_2|^2\gamma_2(dy_1dy_2).
%\end{align*}
\end{proof}

The following propositions describe how the regularity of functions defined on $\Prob$ change when we compose with the projections $\pi^i_*$, $i=1,2$.

\begin{prop} \label{Wass Lip}
Let $U:\Prob\rightarrow \mathbb{R}$ be a Lipschitz function with Lipschitz constant equal to $L$. Then, for $i=1,2$ the function $\tilde{U}_i:\Joint\rightarrow \R$ with $\tilde{U}_i(m)=U(\pi^i_*m)$ is also Lipschitz with the same Lipschtiz constant.
\end{prop}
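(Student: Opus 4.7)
The plan is to reduce the statement directly to Proposition \ref{Wass est}, which provides exactly the inequality needed to transport the Lipschitz estimate of $U$ on $\Prob$ to an estimate of $\tilde{U}_i$ on $\Joint$. Concretely, given $m_1,m_2\in\Joint$ and $i\in\{1,2\}$, I would write
\begin{equation*}
|\tilde{U}_i(m_1)-\tilde{U}_i(m_2)|=|U(\pi^i_*m_1)-U(\pi^i_*m_2)|\leq L\,\dw(\pi^i_*m_1,\pi^i_*m_2),
\end{equation*}
using only that $U$ is $L$-Lipschitz on $\Prob$ and that $\pi^i_*m_j\in\Prob$ (the latter because the projection of a measure with finite second moment on $\R^{2d}$ has finite second moment on $\R^d$, since $|x_i|\leq|(x_1,x_2)|$).

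Then I would apply Proposition \ref{Wass est} to bound $\dw(\pi^i_*m_1,\pi^i_*m_2)\leq\dw(m_1,m_2)$, yielding
\begin{equation*}
|\tilde{U}_i(m_1)-\tilde{U}_i(m_2)|\leq L\,\dw(m_1,m_2),
\end{equation*}
which is exactly the claim with the same Lipschitz constant $L$. Since $m_1,m_2$ were arbitrary, this completes the proof.

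There is no real obstacle here; the entire content lies in Proposition \ref{Wass est}, which was already established by a straightforward coupling argument (any coupling on $\R^{2d}\times\R^{2d}$ of $m_1,m_2$ pushes forward via $\pi^i\times\pi^i$ to a coupling of $\pi^i_*m_1,\pi^i_*m_2$ with no larger cost). The only minor point worth mentioning explicitly is the well-definedness of $\tilde{U}_i$, i.e.\ that $\pi^i_*$ indeed maps $\Joint$ into $\Prob$, which is immediate from the coordinate bound $|x_i|^2\leq|x_1|^2+|x_2|^2$.
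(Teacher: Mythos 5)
Your proof is correct and follows exactly the paper's argument: apply the Lipschitz bound for $U$ and then Proposition \ref{Wass est} to get $\dw(\pi^i_*m_1,\pi^i_*m_2)\leq\dw(m_1,m_2)$. The extra remark on well-definedness of $\pi^i_*$ is a harmless addition.
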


\begin{proof}
Indeed, let $m_1,m_2\in \Joint$. Then, for $i=1,2$ we have
$$|\tilde{U}(m_1)-\tilde{U}(m_2)|=|U(\pi^i_*m_1)-U(\pi^i_*m_2)|\leq L\dw (\pi^i_*m_1,\pi^i_*m_2)\leq L\dw(m_1,m_2).$$
\end{proof}

\begin{prop}\label{Wass reg}
Let $U:\Prob\rightarrow \mathbb{R}$ be a $C^1$ function. Then, for $i=1,2$ the function $\tilde{U}_i:\Joint\rightarrow \R$ with $\tilde{U}_i(m)=U(\pi^i_*m)$ is also $C^1$ with 
\be \label{regularity 2}
\frac{\delta \tilde{U}_i}{\delta m}(m,x_1,x_2)=\frac{\delta U}{\delta m}(\pi^i_*m,\pi^i(x_1,x_2)),\;\text{ for every }m\in\Joint,\; x_1,x_2\in\R^d.
\ee
\end{prop}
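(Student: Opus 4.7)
The plan is to verify Definition \ref{Wass} directly for $\tilde{U}_i$ by reducing everything to the corresponding definition for $U$, using the key algebraic fact that the pushforward $\pi^i_*$ is linear on convex combinations of measures. Concretely, for any $m_1,m_2\in\Joint$, $t\in[0,1]$, and any Borel set $A\subset\R^d$,
$$((1-t)m_1+tm_2)((\pi^i)^{-1}(A)) = (1-t)(\pi^i_*m_1)(A) + t(\pi^i_*m_2)(A),$$
so $\pi^i_*((1-t)m_1+tm_2)=(1-t)\pi^i_*m_1+t\pi^i_*m_2$. This immediately transports the one-parameter family used to define the flat derivative from $\Joint$ to $\Prob$.

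First I would form the difference quotient for $\tilde{U}_i$ at $m_1$ in direction $m_2$, apply the above identity, and invoke Definition \ref{Wass}(i) for $U$ at the point $\pi^i_*m_1$ in direction $\pi^i_*m_2$ to obtain
$$\lim_{t\to 0}\frac{\tilde{U}_i((1-t)m_1+tm_2)-\tilde{U}_i(m_1)}{t}=\int_{\R^d}\frac{\delta U}{\delta m}(\pi^i_*m_1,x)(\pi^i_*m_2-\pi^i_*m_1)(dx).$$
Then I would invoke the change-of-variables formula for pushforwards, $\int \phi\,d(\pi^i_*m_j)=\int \phi\circ\pi^i\,dm_j$, with $\phi(x)=\frac{\delta U}{\delta m}(\pi^i_*m_1,x)$, to rewrite the right-hand side as
$$\int_{\R^{2d}}\frac{\delta U}{\delta m}(\pi^i_*m_1,\pi^i(x_1,x_2))(m_2-m_1)(dx_1,dx_2),$$
which exhibits the candidate map $K(m,(x_1,x_2)):=\frac{\delta U}{\delta m}(\pi^i_*m,\pi^i(x_1,x_2))$ as the flat derivative; formula \eqref{regularity 2} follows.

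To finish I would verify the two remaining hypotheses in Definition \ref{Wass}(i). The integrability $\int_{\R^{2d}}K(m_1,(x_1,x_2))\,m_2(dx_1,dx_2)<\infty$ follows from the analogous property of $\frac{\delta U}{\delta m}$ applied to the measure $\pi^i_*m_2\in\Prob$, after the same change-of-variables identity. For joint continuity of $K$ on $\Joint\times\R^{2d}$, I would combine the assumed joint continuity of $\frac{\delta U}{\delta m}$ on $\Prob\times\R^d$ with the continuity of $\pi^i_*:\Joint\to\Prob$ (which is in fact $1$-Lipschitz by Proposition \ref{Wass est}) and the continuity of the coordinate projection $\pi^i:\R^{2d}\to\R^d$.

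I do not anticipate a genuine obstacle here: the argument is essentially definition-chasing, and the single non-trivial ingredient is the linearity of $m\mapsto \pi^i_*m$ with respect to convex combinations, which is immediate. The only thing requiring minor care is ensuring that the change-of-variables step is applied in the correct direction, namely on a fixed measure $\pi^i_*m_1$ (used as the base point of the derivative) while integrating against the signed measure $\pi^i_*(m_2-m_1)$; once this bookkeeping is in place, the identification of $K$ and the verification of continuity and integrability are routine.
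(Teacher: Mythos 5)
Your proposal is correct and follows essentially the same route as the paper: linearity of $\pi^i_*$ on convex combinations to transport the difference quotient, the flat-derivative definition for $U$ at $\pi^i_*m_1$, and the pushforward change-of-variables to identify the candidate kernel. The only difference is that you spell out the integrability and joint-continuity checks that the paper dispatches with a one-line remark, which is a harmless (and slightly more complete) addition.
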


\begin{proof}
We will show the result for $i=1$ as the other case can be shown with an identical argument. For simplicity in the calculations below we drop the index $i$, so $\tilde{U}_i=\tilde{U}$ and $\pi^1=\pi$. Let $m_1,m_2\in\Joint$ and $\pi_*m_1=\mu_1,\; \pi_*m=\mu^2$. We have by the properties of the pushforward
\begin{align*}
\lim_{t\to 0} \frac{ \tilde{U}((1-t)m_1+tm_2)-\tilde{U}(m_1)}{t}&=\lim_{t\to 0} \frac{ U((1-t)\mu_1+t\mu_2)-U(\mu_2)}{t}\\
&=\int_{\R^d} \frac{\delta U}{\delta m}(\mu_1,x_1)(\mu_2-\mu_1)(dx_1)\\
&=\int_{\R^d} \frac{\delta U}{\delta m}(\pi_*m_1,x_1)(\pi_*m_2-\pi_*m_1)(dx_1,dx_2)\\
&=\int_{\R^d} \frac{\delta U}{\delta m}(\pi_*m_1,x_1)(\pi_*m_2-\pi_*m_1)(dx_1)\\
&= \int\int_{\R^d\times\R^d} \frac{\delta U}{\delta m}(\pi_*m_1,\pi(x_1,x_2))(m_2-m_1)(dx_1,dx_2).
\end{align*}
Equation (\ref{regularity 2}) follows. The regularity of $\tilde{U}$ is a direct implication of \eqref{regularity 2} and the fact that $U$ is $C^1$.
\end{proof}

Finally, Proposition \ref{Wass reg} can give us a characterization of a function whose Wasserstein flat derivative is independent of a variable (in analogy to \eqref{regularity 2}). Namely,

\begin{prop} \label{Wass independence}
Let $U:\Joint\rightarrow \R$ be a $C^1$ function such that $\frac{\delta U}{\delta m}(m,x,y)$ is independent of $y$, then there exists a $C^1$ function $\bar{U}: \Prob\rightarrow \R$ such that 
$$U(m)=\bar{U}(\pi^1_*m),\quad \text{for all }m\in\Joint.$$
\end{prop}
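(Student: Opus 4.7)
The plan is to construct $\bar U$ via a canonical lift. Fix once and for all a reference probability measure $\nu_0\in\Prob$ (for instance $\nu_0=\delta_0$) and define $\bar U:\Prob\to\R$ by $\bar U(\mu):=U(\mu\otimes\nu_0)$, where $\mu\otimes\nu_0\in\Joint$ denotes the product measure. The target identity $U(m)=\bar U(\pi^1_*m)$ is then the statement that $U$ has the same value on any two elements of $\Joint$ sharing the same $x$-marginal, in particular on $m$ and on $(\pi^1_*m)\otimes\nu_0$.

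To prove this, fix $m\in\Joint$, set $\mu=\pi^1_*m$, and consider the linear interpolation
$$m_t:=(1-t)m+t(\mu\otimes\nu_0),\qquad t\in[0,1].$$
Both endpoints have $x$-marginal $\mu$, so by linearity of $\pi^1_*$ we have $\pi^1_*m_t=\mu$ for every $t\in[0,1]$. The first key step is to establish that $\phi(t):=U(m_t)$ is $C^1$ on $[0,1]$ with
$$\phi'(t)=\int_{\R^{2d}}\frac{\delta U}{\delta m}(m_t,x,y)\bigl((\mu\otimes\nu_0)-m\bigr)(dx,dy).$$
This is the standard chain rule for flat derivatives; one proves it by a rescaling trick: for $s\in[0,1)$ and small $h>0$, write $m_{s+h}=(1-\lambda)m_s+\lambda(\mu\otimes\nu_0)$ with $\lambda=h/(1-s)$, apply Definition~\ref{Wass}(i) at $m_s$, and use continuity of $\frac{\delta U}{\delta m}$ to handle the endpoints; the symmetric argument gives the left derivative.

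Now the hypothesis enters: since $\frac{\delta U}{\delta m}(m_t,x,y)$ does not depend on $y$, the inner integral collapses to
$$\int_{\R^d}\frac{\delta U}{\delta m}(m_t,x)\bigl(\pi^1_*(\mu\otimes\nu_0)-\pi^1_*m\bigr)(dx)=\int_{\R^d}\frac{\delta U}{\delta m}(m_t,x)(\mu-\mu)(dx)=0.$$
Hence $\phi'\equiv 0$, so $\phi$ is constant and $U(m)=\phi(0)=\phi(1)=U(\mu\otimes\nu_0)=\bar U(\mu)$, which is the desired identity.

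It remains to verify that $\bar U$ is itself $C^1$. The map $\mu\mapsto\mu\otimes\nu_0$ is affine and $\dw$-continuous (in fact $\dw(\mu_1\otimes\nu_0,\mu_2\otimes\nu_0)\leq\dw(\mu_1,\mu_2)$), so for $\mu_1,\mu_2\in\Prob$, taking limits as $t\to 0$ in
$$\frac{\bar U((1-t)\mu_1+t\mu_2)-\bar U(\mu_1)}{t}=\frac{U((1-t)\mu_1\otimes\nu_0+t\mu_2\otimes\nu_0)-U(\mu_1\otimes\nu_0)}{t}$$
and applying the $C^1$ property of $U$ yields $\frac{\delta\bar U}{\delta m}(\mu,x)=\frac{\delta U}{\delta m}(\mu\otimes\nu_0,x)$, which is jointly continuous in $(\mu,x)$ by the continuity of $\frac{\delta U}{\delta m}$ and of $\mu\mapsto\mu\otimes\nu_0$. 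The main (and essentially only) obstacle is the chain-rule justification for $\phi'(t)$ at interior $t$, since the definition of the flat derivative is a priori one-sided at $t=0$; but this is resolved by the rescaling argument sketched above and is routine given the assumed $C^1$ regularity.
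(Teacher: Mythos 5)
Your proof is correct and rests on the same mechanism as the paper's: the fundamental theorem of calculus along a linear interpolation between two measures with the same $x$-marginal, followed by the observation that the $y$-independence of $\frac{\delta U}{\delta m}$ collapses the derivative to an integral against the (constant) $x$-marginal, forcing the interpolated values to be constant. The only cosmetic difference is that you realize $\bar U$ explicitly via the section $\mu\mapsto\mu\otimes\nu_0$ (which also spares you the paper's extra check that $\frac{\delta U}{\delta m}(m_0,x)$ does not depend on the representative $m_0$ above $\mu_0$), whereas the paper obtains $\bar U$ abstractly from the factorization.
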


\begin{proof}
Let $m_0,m_1\in\Joint$ and set $m_s=(1-s)m_0+sm_1$, where $s\in [0,1]$. We also set $\mu_0=\pi^1_*m_0$ and $\mu_1=\pi^1_*m_1$. We observe that since the flat derivative is independent of $y$
\begin{align*}
U(m_0)-U(m_1)&=\int_0^1\int\int_{\R^d\times\R^d}\frac{\delta U}{\delta m}(m_s,x,y)(m_1-m_0)(dx,dy)ds\\
&= \int_0^1\int_{\R^d}\frac{\delta U}{\delta m}(m_s,x)(\mu_1-\mu_0)(dx)ds,
\end{align*}
therefore if $\mu_0=\mu_1$ we have $U(m_0)=U(m_1)$. This implies that there exists a function $\bar{U}$ defined on $\Prob$ such that $U(m)=\bar{U}(\pi^1_*m)$. It suffices to show that $\bar{U}$ is $C^1$. For any $m_0,m_1\in \Joint$ we have
\begin{align*}
\lim_{t\to 0}\frac{U((1-t)m_0+tm_1)-U(m_0)}{t}&=\int\int_{\R^d\times\R^d}\frac{\delta U}{\delta m}(m_0,x,y)(m_1-m_0)(dx,dy)\\
&= \int_{\R^d}\frac{\delta U}{\delta m}(m_0,x,y)m_1(dx,dy)\\
&= \int_{\R^d}\frac{\delta U}{\delta m}(m_0,x)\mu_1(dx).
\end{align*}
Note that if $m_0'$ is another measure such that $\pi^1_*m_0'=\mu_0$, then from this calculation we deduce that 
$$\int_{\R^d}\frac{\delta U}{\delta m}(m_0,x)\mu_1(dx)=\int_{\R^d}\frac{\delta U}{\delta m}(m_0',x)\mu_1(dx),\quad\text{for any }\mu_1\in\Prob.$$
Consequently, $\frac{\delta U}{\delta m}(m_0,x)$ is independent of the choice of $m_0$ such that $\pi^1_*m_0=\mu_0$. Thus, for $\mu_0,\mu_1\in \Prob$ and any $m_0,m_1\in \Joint$ such that $\pi^1_0m_i=\mu_i,\; i=0,1$, we have
\begin{align*}
\lim_{t\to 0}\frac{\bar{U}((1-t)\mu_0+t\mu_1)-\bar{U}(\mu_0)}{t}&=\lim_{t\to 0}\frac{U((1-t)m_0+tm_1)-U(m_0)}{t}\\
&=\int\int_{\R^d\times\R^d}\frac{\delta U}{\delta m}(m_0,x,y)(m_1-m_0)(dx,dy)\\
&= \int_{\R^d}\frac{\delta U}{\delta m}(m_0,x)(\mu_1-\mu_0)(dx)
\end{align*}
and hence $\frac{\delta\bar{U}}{\delta \mu}(\mu, x)$ exists and is equal to $\frac{\delta U}{\delta m}(m,x)$, where $\mu =\pi^1_*m$.
\end{proof}
\vspace{2mm}

\section{On the McKean-Vlasov control equation}
Let the maps $B,\; \Sigma^1,\; \Sigma^0$ be such that $B\in \mathcal{M}_{b,L}^{\hat{\bm{\zeta}}}(\R^d\times\Prob ;\R^d)$ and $\Sigma^1,\Sigma^0\in \mathcal{M}_{b,L}^{\hat{\bm{\zeta}}}(\R^d\times\Prob ;\R^{d\times})$, where $\hat{\bm{\zeta}}$ is as in introduces at the beginning of section 2.1. In the spaces where \eqref{eq:slow-fastMcKeanVlasov} is posed we consider for $s\in [0,T],\; t\in [s,T]$ the controlled McKean-Vlasov SDE:
\begin{align}\label{controlled McKean}
X^{s,\mu}_t = \eta+\int_s^t B(X^{s,\mu}_{\tau},\L(X^{s,\mu}_{\tau}|\F^0_{\tau}))d\tau+&\int_s^t\Sigma^1(X^{s,\mu}_{\tau},\L(X^{s,\mu}_{\tau}|\F^0_{\tau}))dW_{\tau}\nonumber\\
&+\int_s^t\Sigma^0(X_t^{s,\mu},\L(X^{s,\mu}_{\tau}|\F^0_{\tau}))dB_{\tau}^0,
\end{align}
where $\L(\eta)= \mu$. For $x\in\R^d$ we also consider the process $X_t^{s,x,\mu}$, which is such that
\begin{align}\label{controlled SDE}
X^{s,x,\mu}_t = x+\int_s^t B(X^{s,x,\mu}_{\tau},\L(X^{s,\mu}_{\tau}|\F^0_{\tau}))d\tau+&\int_s^t\Sigma^1(X^{s,x,\mu}_{\tau},\L(X^{s,\mu}_{\tau}|\F^0_{\tau}))dW_{\tau}\nonumber\\
&+\int_s^t\Sigma^0(X_t^{s,x,\mu},\L(X^{s,\mu}_{\tau}|\F^0_{\tau}))dB_{\tau}^0.
\end{align}
Note that $X_t^{s,x,\L(\eta)}\bigg|_{x=\eta}=X_t^{s,\L (\eta)}$. In addition, by standard estimates for conditional McKean-Vlasov SDEs, which are consequences of Gr\"onwall's and Burkholder-Davis-Gundy's inequalities (proved in \cite{carmdel 2}), we have that for any $p\geq 2$ there exists a constant $C_p>0$ such that
\begin{align}
& \mathbb{E}\bigg[\sup_{t\in [s,T]}|X_t^{s,\mu} - X_t^{s,\mu'}|^p\bigg]\leq C_p\dw(\mu,\mu')^p, \label{carm 1}\\
&\mathbb{E}^0\bigg[\sup_{t\in [s,T]} \dw\bigg( \L( X_t^{s,\mu}|\F^0_t),\L(X_t^{s,\mu'} |\F^0_t)   \bigg)^p  \bigg]\leq C_p \dw(\mu,\mu')^p,\label{carm 2}\\
&\mathbb{E}\bigg[\sup_{t\in [s,T]}|X_t^{s,x,\mu}-X_t^{s,x',\mu'}|^p   \bigg]\leq C_p( |x-x'|^p+\dw(\mu,\mu')^p)\label{carm 3}
\end{align}
for every $x,\;x'\in \R^d$ and $\mu,\;\mu'\in \Prob$.\\
\textbf{Notation.} For $\eta\in L^2(\F_s)$ with $\L(\eta)=\mu$, since we have estimate \eqref{carm 3} we may use the notation $X_t^{s,x,\mu}=X_t^{s,x,\L(\eta)}= X_t^{s,x,\eta}$ and $X_t^{s,x,\mu}\bigg|_{x=\eta}=X_t^{s,\mu}=X_t^{s,\L(\eta)}$.
\vspace{2mm}

In this section of the Appendix we will prove the following Theorem:

\begin{thm}\label{main appendix}
Let $B,\; \Sigma^1,\; \Sigma^0$ be as above and $G\in \mathcal{M}_{b,L}^{\bm{\dot{\zeta}}}(\Prob ; \R)$. Then, the function $v:[0,T]\times\Prob\rightarrow \R$ with
$$v(t,\mu)=\mathbb{E}^0\bigg[ G(\L(X_T^{t,\mu}|\F^0_T))\bigg]$$
is in $\mathcal{M}_{b,L}^{\bm{\dot{\zeta}}}([0,T]\times\Prob; \R)$, satisfies $||v||_{\mathcal{M}_b^{\bm{\dot{\zeta}}}}\leq C||G||_{\mathcal{M}_b^{\bm{\dot{\zeta}}}}$ for some constant $C>0$ depending on $T$ and is the unique solution to the PDE
\be \label{MFC app}
\begin{cases}
\dot{v}(t,\mu)+\int_{\R^d}\bigg( B(x,\mu)\cdot D_{\mu}v(t,\mu,x) +\frac{1}{2}(\Sigma^1\Sigma^{1\top}+\Sigma^0\Sigma^{0\top})(s,\mu):D_xD_{\mu}v(t,\mu,x)\bigg) \mu(dx)\\
\hspace{0.7cm}+\frac{1}{2}\int_{\R^{2d}} \Sigma^0(x,\mu)\Sigma^{0\top}(x',\mu):D^2_{\mu\mu}v(t,\mu,x,x')\mu(dx)\mu(dx')=0,\;\;(t,\mu)\in [0,T)\times\Prob,\\
v(T,\mu)=G(\mu),\;\; \mu\in\Prob.
\end{cases}
\ee
Furthermore, the following estimate holds
\be \label{time measure der estimate}
\sup_{\substack{t\in [0,T],\mu\in \Prob \\x_1,x_2\in \R^d}}\bigg \{ |D_m\dot{v}(t,\mu,x_1)|+|D_{x_1}D_m\dot{v}(t,\mu, x_1)|+|D^2_m\dot{v}(t,\mu,x_1,x_2)|\bigg\} \leq C(T)||G||_{\mathcal{M}_b^{\bm{\dot{\zeta}}}}.
\ee
\end{thm}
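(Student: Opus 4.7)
The strategy is to establish the claimed regularity of $v(t,\mu) = \mathbb{E}^0[G(\mathcal{L}(X_T^{t,\mu}|\mathcal{F}_T^0))]$ by propagating the smoothness of $G$ through the McKean--Vlasov flow, then verify the PDE via It\^o's formula, and finally obtain uniqueness from a standard verification argument. Throughout we treat the conditional law $\mathcal{L}(\cdot|\mathcal{F}^0)$ as a random measure and exploit the product structure of the probability space to separate the ``common noise'' $B^0$ from $(W,B)$.

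\textbf{Step 1: Flow regularity of the SDE system.} First, building on the moment estimates \eqref{carm 1}--\eqref{carm 3}, I would show by induction on the order of differentiation that the maps $x \mapsto X_t^{s,x,\mu}$ and $\mu \mapsto \mathcal{L}(X_t^{s,\mu}|\mathcal{F}^0_t)$ admit Fr\'echet/Wasserstein derivatives up to order $4$ (matching $\bm{\dot{\zeta}}$), with all such derivatives bounded in $L^p(\Omega)$ uniformly on $[0,T]$. The derivatives satisfy linearized SDEs obtained by formally differentiating \eqref{controlled McKean}--\eqref{controlled SDE}; because $B, \Sigma^1, \Sigma^0 \in \mathcal{M}_{b,L}^{\hat{\bm{\zeta}}}$, the coefficients of these linearized SDEs are bounded and Lipschitz, so standard Gr\"onwall and BDG estimates yield uniform $L^p$ bounds. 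The measure derivatives are handled via the lift to an auxiliary probability space in the spirit of Lions, taking care (as in Remark \ref{Wass rem}) that the regularity assumed on the coefficients lets us identify Lions and Wasserstein intrinsic derivatives.

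\textbf{Step 2: Spatial regularity of $v$.} Using the representation $v(t,\mu) = \mathbb{E}^0[G(\mathcal{L}(X_T^{t,\mu}|\mathcal{F}_T^0))]$ together with the chain rule for Wasserstein derivatives (see e.g.\ \cite{CarmonaDelarue}), I would express $D_\mu v, D_x D_\mu v, D^2_{\mu\mu} v$ and their higher analogues as conditional expectations of polynomial combinations of the flow derivatives constructed in Step 1 multiplied by the derivatives of $G$. Since $G \in \mathcal{M}_{b,L}^{\bm{\dot{\zeta}}}$ and the flow derivatives are bounded in $L^p$, these representations immediately give bounded, jointly continuous derivatives up to order $4$, with norms controlled by $C(T)\|G\|_{\mathcal{M}_b^{\bm{\dot{\zeta}}}}$; the Lipschitz regularity follows from the corresponding continuity of flow derivatives in $(x,\mu)$ combined with the Lipschitz property of the derivatives of $G$.

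\textbf{Step 3: The PDE and the estimate on $\dot{v}$.} Once $v(\cdot,\mu)$ is shown to be of class $C^{2}$ in the measure variable with the required continuity, I would apply the It\^o formula of Theorem 4.14 of \cite{carmdel 2} to $s \mapsto v(s, \mathcal{L}(X_s^{t,\mu}|\mathcal{F}_s^0))$ and, using the martingale property of the conditional expectation $\mathbb{E}^0[G(\mathcal{L}(X_T^{t,\mu}|\mathcal{F}_T^0))]$ under $\mathbb{P}^0$, equate the drift term to zero. This yields \eqref{MFC app}, and in particular gives a formula
\[
\dot{v}(t,\mu) = -\int_{\R^d} \bigl(B \cdot D_\mu v + \tfrac{1}{2}(\Sigma^1\Sigma^{1\top}+\Sigma^0\Sigma^{0\top}):D_x D_\mu v\bigr)\mu(dx) - \tfrac{1}{2}\!\int_{\R^{2d}}\!\Sigma^0\Sigma^{0\top}:D^2_{\mu\mu}v\,\mu(dx)\mu(dx').
\]
Differentiating this identity in the measure variable (once or twice) and then using the bounds on $D^k_\mu v$ (up to order $4$) already proved in Step 2, I obtain \eqref{time measure der estimate}.

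\textbf{Step 4: Uniqueness.} For uniqueness, given any $w \in \mathcal{M}_{b,L}^{\bm{\dot{\zeta}}}([0,T]\times\Prob;\R)$ solving \eqref{MFC app} with terminal $G$, I would apply It\^o's formula to $s \mapsto w(s,\mathcal{L}(X_s^{t,\mu}|\mathcal{F}_s^0))$, take $\mathbb{E}^0$, and deduce $w(t,\mu) = \mathbb{E}^0[G(\mathcal{L}(X_T^{t,\mu}|\mathcal{F}_T^0))] = v(t,\mu)$.

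\textbf{Main obstacle.} The hardest part is Step 1/2: propagating the regularity of $G$ to \emph{fourth}-order derivatives of $v$ in the measure variable while keeping track of the subtle role of the common-noise conditioning. One has to linearize the SDE system repeatedly (each differentiation in $\mu$ producing a new coupled system on the Lions-lifted space) and estimate the resulting fourth-order derivative processes in $L^p$ uniformly in $\mu$; the Lipschitz regularity of the fourth derivative, needed for $\mathcal{M}_{b,L}^{\bm{\dot{\zeta}}}$, is the most delicate bookkeeping step and is what forces the assumption that $\bm{\dot{\zeta}}$ contains all multi-indices of total order up to $4$.
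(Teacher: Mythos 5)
Your proposal follows essentially the same route as the paper: establish $L^p$-bounded, Lipschitz derivatives of the flows $x\mapsto X_t^{s,x,\mu}$ and $\mu\mapsto X_t^{s,x,\mu}$ up to fourth order via linearized SDEs, Gr\"onwall/BDG and the Lions lift (the paper's Propositions \ref{x differentiability}, \ref{measure derivative} and Corollary \ref{key diff}), then transfer this regularity to $v$ by the chain rule, derive the PDE and the bound on $\dot v$ via It\^o's formula, and obtain uniqueness by a verification argument along the flow. This matches the paper's proof, including the identification of the fourth-order measure derivatives as the delicate step.
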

\vspace{2mm}

The proof is inspired by the ideas presented in \cite{associated SDEs}. For simplicity, we assume that $d=1$ as the argument can be trivially extended for any dimension $d$.

\begin{prop} \label{x differentiability}
Let $X_t^{s,x,\mu}$ be as in \eqref{controlled SDE}. Then, $D_xX_t^{s,x,\mu}$ exists and is the unique solution of the SDE
\begin{align}
D_xX_t^{s,x,\mu}=&\;1+\int_s^t D_xB(X_{\tau}^{s,x,\mu},\mu_{\tau})D_xX_t^{s,x,\mu}d\tau +\int_s^t D_x\Sigma^1(X_{\tau}^{s,x,\mu},\mu_{\tau})D_xX_{\tau}^{s,x,\mu} dW_{\tau}\nonumber\\
&+ \int_s^t D_x\Sigma^0(X_{\tau}^{s,x,\mu},\mu_{\tau})D_xX_{\tau}^{s,x,\mu} dB^0_{\tau},\label{SDE}
\end{align}
where $\mu_t=\L( X_t^{s,\mu}|\F_t^0)$, $t\in [s,T]$. In addition, the derivatives $D^k_xX_t^{s,x,\mu}$ exist for $k=2,3,4$. Furthermore, for every $p\geq 2$, there exists a constant $C_p>0$ such that for all $t\in [s,T]$, $x,x'\in \R$ and $\eta,\eta'\in L^2(\F^1_s)$ with $\L(\eta)=\mu$ and $\L(\eta')=\mu'$
\begin{align}
&\mathbb{E}\bigg[ \sup_{t\in [s,T]} |D^k_xX_t^{s,x,\mu}|^p\bigg]\leq C_p\label{x estimate 1}\\
&\mathbb{E}\bigg[ \sup_{t\in [s,T]}|D^k_xX_t^{s,x,\mu}-D^k_xX_t^{s,x',\mu'}|^p\bigg]\leq C_p( |x-x'|^p+ \dw(\mu,\mu')^p)\label{x estimate 2}
\end{align}
for $k=1,2,3,4$.
\end{prop}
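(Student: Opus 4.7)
The key structural observation is that the argument $\mu_\tau = \L(X^{s,\mu}_\tau|\F^0_\tau)$ appearing in \eqref{controlled SDE} depends only on $(s,\mu)$ and is independent of $x$. Hence, conditionally on the $\sigma$-algebra generated by $B^0$, equation \eqref{controlled SDE} is an ordinary It\^o SDE for $X^{s,x,\mu}_t$ with $x$-independent, random coefficients $z\mapsto B(z,\mu_\tau(\omega^0))$, $z\mapsto \Sigma^i(z,\mu_\tau(\omega^0))$. Because $B,\Sigma^0,\Sigma^1\in\mathcal{M}_{b,L}^{\hat{\bm{\zeta}}}$ and $\hat{\bm{\zeta}}$ contains all multi-indices of order up to $4$, the $z$-derivatives of these coefficients up to order $4$ are uniformly bounded and Lipschitz. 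The statement therefore reduces to classical regularity of stochastic flows, which I verify by a direct difference-quotient argument.

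\textbf{First derivative.} For $h\neq 0$ set $\Delta^h_t:=h^{-1}(X^{s,x+h,\mu}_t-X^{s,x,\mu}_t)$. Subtracting the integral forms of \eqref{controlled SDE} at $x+h$ and $x$ and Taylor expanding $B,\Sigma^1,\Sigma^0$ to first order in their first argument yields a linear SDE
$$\Delta^h_t = 1+\int_s^t \alpha^h_\tau \Delta^h_\tau\,d\tau+\int_s^t \beta^{1,h}_\tau \Delta^h_\tau\,dW_\tau+\int_s^t \beta^{0,h}_\tau \Delta^h_\tau\,dB^0_\tau,$$
where $\alpha^h_\tau,\beta^{i,h}_\tau$ are uniformly bounded and converge pointwise (as $h\to 0$) to $D_xB(X^{s,x,\mu}_\tau,\mu_\tau)$ and $D_x\Sigma^i(X^{s,x,\mu}_\tau,\mu_\tau)$. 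Gronwall and BDG first give $\sup_h\mathbb{E}\sup_{t\in[s,T]}|\Delta^h_t|^p<\infty$; applying the same scheme to $\Delta^h_t-J_t$, where $J_t$ is the unique solution of the linear SDE \eqref{SDE} (uniqueness is immediate by Gronwall), produces $\mathbb{E}\sup_{t\in[s,T]}|\Delta^h_t-J_t|^p\to 0$. This identifies $J_t=D_xX^{s,x,\mu}_t$ and establishes \eqref{SDE}.

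\textbf{Higher derivatives and uniform bounds.} One iterates. Formal differentiation of \eqref{SDE} using Fa\`a di Bruno's formula produces, for $k=2,3,4$, a linear SDE for $D^k_x X^{s,x,\mu}_t$ whose multiplicative coefficients are the same $D_xB,D_x\Sigma^i$ as in \eqref{SDE} and whose inhomogeneous forcing is a polynomial in the lower-order derivatives $D^j_xX^{s,x,\mu}_t$ ($1\leq j<k$) with coefficients given by $z$-derivatives of $B,\Sigma^i$ of order up to $k$. Boundedness of these $z$-derivatives (from $\hat{\bm{\zeta}}$) combined with the induction hypothesis on $k$ shows that the forcing lies in every $L^p$; the same difference-quotient argument as above then shows the resulting process coincides with $D^k_xX^{s,x,\mu}_t$. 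The estimate \eqref{x estimate 1} follows by induction on $k$: for each fixed $k$, Gronwall plus BDG applied to the linear SDE gives an $L^p$ bound in terms of the $L^p$-norm of the forcing, which is controlled by the inductive hypothesis at orders $<k$.

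\textbf{Stability in $(x,\mu)$; main obstacle.} For \eqref{x estimate 2} one writes the SDE satisfied by the difference $D^k_xX^{s,x,\mu}_t-D^k_xX^{s,x',\mu'}_t$. This decomposes into a linear part in the difference itself (with bounded coefficients) plus inhomogeneous terms which are differences either of $B,\Sigma^i$ or of their $z$-derivatives evaluated along the two processes $(X^{s,x,\mu}_\tau,\mu_\tau)$ and $(X^{s,x',\mu'}_\tau,\mu'_\tau)$. Lipschitz continuity of $B,\Sigma^i$ and their derivatives (class $\mathcal{M}_{b,L}^{\hat{\bm{\zeta}}}$) reduces these inhomogeneities to $|X^{s,x,\mu}_\tau-X^{s,x',\mu'}_\tau|$ and $\dw(\mu_\tau,\mu'_\tau)$, which are controlled in $L^p$ by \eqref{carm 2} and \eqref{carm 3}. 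A final Gronwall plus BDG step delivers \eqref{x estimate 2}. The main technical obstacle is the bookkeeping for $k=3,4$: the Fa\`a di Bruno expansion generates a proliferation of products of derivatives of $B,\Sigma^i$ and of $D^j_xX$ for $j<k$, and comparing these at the two base points requires iterating the induction hypothesis at all lower orders together with \eqref{carm 1}--\eqref{carm 3}. Conceptually, however, no new ideas beyond Gronwall, BDG and the quoted conditional stability estimates are needed.
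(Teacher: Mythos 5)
Your proposal is correct and follows essentially the same route as the paper: formally differentiate \eqref{controlled SDE} to obtain the linear SDE \eqref{SDE} (noting that $\mu_\tau$ does not depend on $x$, so no measure-derivative terms appear), identify its solution with the derivative via a difference-quotient estimate controlled by Gr\"onwall and BDG, and iterate for $k=2,3,4$, with \eqref{x estimate 1}--\eqref{x estimate 2} following from the same standard SDE estimates together with \eqref{carm 2}--\eqref{carm 3}. Your write-up simply supplies more of the bookkeeping that the paper leaves as ``standard.''
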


\begin{proof}
Note that \eqref{SDE} is obtained by formally differentiating \eqref{controlled SDE}. Let $D_xX_t^{s,x,\mu}$ be its solution. Then, by a standard Gr\"onwall inequality argument (after invoking the BDG inequality the regularity of $B,\; \Sigma$ and $\Sigma^0$ and \eqref{carm 2}, \eqref{carm 3}) we get that
\begin{align}
\mathbb{E}\bigg[  \sup_{t\in[s,T]}|X_t^{s,x+h,\mu}-X_t^{s,x,\mu}-hD_xX_t^{s,x,\mu}|^2  \bigg]= o(|h|^2),\label{L2 esimate}
\end{align}
therefore
$X_t^{s,x,\mu}$ is differentiable with respect to $x$, in the $L^2$-sense, and its derivative is equal to $D_xX_t^{s,x,\mu}$. Now, for $k=1$, estimates \eqref{x estimate 1} and \eqref{mixed estimate 2} follow by standard estimates for SDEs by \eqref{SDE} due to the regularity of $B,\; \Sigma$ and $\Sigma^0$.\\
For $k=2,3,4$ the existence of $D_x^kX_t^{s,x,\mu}$ follows in a similar way by formally differentiating \eqref{SDE} ($l=1,2,3$ times, respectively) and then using the regularity of $B,\; \Sigma$ and $\Sigma^0$ deriving an estimate like \eqref{L2 esimate}. For $k=2,3,4$, estimates \eqref{x estimate 1} and \eqref{x estimate 2} follow by standard estimates for the SDEs satisfied by $D_x^kX_t^{s,x,\mu}$ and estimates \eqref{x estimate 1} and \eqref{x estimate 2} for the previous derivatives.
\end{proof}

Next we study the smoothness of the map $\eta \mapsto X_t^{s,x,\mu}$, where $\L(\eta)= \mu$. In particular, we show that this map is Fr\'echet differentiable with Fr\'echet derivative given by $\eta\mapsto\bigg(\xi\mapsto \tilde{\mathbb{E}}[U_t^{s,x,\mu}(\tilde{\eta})\tilde{\xi}]\bigg)$, where $U_t^{s,x,\mu}(y)$ is adapted to $(\mathcal{F}_t)_{t\in [s,T]}$ and where $(\tilde{\eta},\tilde{\xi})$ is an independent copy of $(\eta,\xi)$ defined over a new probability space $(\tilde{\Omega},\tilde{\F},\tilde{\mathbb{P}})$. As explained in \cite{CarmonaDelarue} the Lions derivative $D_{\mu}X_t^{s,x,\mu}$ is
$$D_{\mu}X_t^{s,x,\mu}(y):= U_t^{s,x,\mu}(y),\; t\in [s,T],\; x,y\in \R.$$

\begin{prop} \label{measure derivative}
Let $(s,x,\mu)\in [0,T]\times\R\times \mathcal{P}_2(\R)$, $X_t^{s,\mu}$ and $X_t^{s,x,\mu}$ be as in \eqref{controlled McKean}, \eqref{controlled SDE} and $U_t^{s,x,\mu}(y),\; U_t^{s,\mu}(y)$ be the unique solution to the system of (uncoupled) SDEs
\begin{align}
&U_t^{s,x,\mu}(y)=\int_s^t D_xB(X_r^{s,x,\mu},\mu_r)U_r^{s,x,\mu}(y)dr\nonumber\\
&+ \int_s^t\mathbb{\tilde{E}}^1[D_{\mu}B(X_r^{s,x,\mu},\mu_r, \tilde{X}_r^{s,y,\mu})\cdot D_x\tilde{X}_r^{s,y,\mu}+D_{\mu}B(X_r^{s,x,\mu},\mu_r, \tilde{X}_r^{s,\mu})\cdot \tilde{U}_r^{s,\L (\tilde{\eta})}(y)]dr\nonumber\\
&+ \int_s^t\mathbb{\tilde{E}}^1[D_{\mu}\Sigma^1(X_r^{s,x,\mu},\mu_r, \tilde{X}_r^{s,y,\mu})\cdot D_x\tilde{X}_r^{s,y,\mu}+D_{\mu}\Sigma^1(X_r^{s,x,\mu},\mu_r, \tilde{X}_r^{s,\mu})\cdot\tilde{U}_r^{s,\L (\tilde{\eta})}(y)]dW_r\nonumber\\
&+ \int_s^t\mathbb{\tilde{E}}^1[D_{\mu}\Sigma^0(X_r^{s,x,\mu},\mu_r, \tilde{X}_r^{s,y,\mu})\cdot D_x\tilde{X}_r^{s,y,\mu}+D_{\mu}\Sigma^0(X_r^{s,x,\mu},\mu_r, \tilde{X}_r^{s,\mu})\cdot \tilde{U}_r^{s,\L (\tilde{\eta})}(y)]dB^0_r\nonumber\\
&+\int_s^t D_x\Sigma^1(X_r^{s,x,\mu},\mu_r)U_r^{s,x,\mu}(y)dW_r+\int_s^t D_x\Sigma^0(X_r^{s,x,\mu},\mu_r)U_r^{s,x,\mu}(y)dB^0_r,\label{SDE B3}
\end{align}
\begin{align}
&U_t^{s,\mu}(y)=\int_s^t D_xB(X_r^{s,\mu},\mu_r)U_r^{s,\mu}(y)dr\nonumber\\
&+ \int_s^t\mathbb{\tilde{E}}^1[D_{\mu}B(X_r^{s,\mu},\mu_r, \tilde{X}_r^{s,y,\mu})\cdot D_x\tilde{X}_r^{s,y,\mu}+D_{\mu}B(X_r^{s,\mu},\mu_r, \tilde{X}_r^{s,\mu})\cdot \tilde{U}_r^{s,\L (\tilde{\eta})}(y)]dr\nonumber\\
&+ \int_s^t\mathbb{\tilde{E}}^1[D_{\mu}\Sigma^1(X_r^{s,\mu},\mu_r, \tilde{X}_r^{s,y,\mu})\cdot D_x\tilde{X}_r^{s,y,\mu}+D_{\mu}\Sigma^1(X_r^{s,\mu},\mu_r, \tilde{X}_r^{s,\mu})\cdot\tilde{U}_r^{s,\L (\tilde{\eta})}(y)]dW_r\nonumber\\
&+ \int_s^t\mathbb{\tilde{E}}^1[D_{\mu}\Sigma^0(X_r^{s,\mu},\mu_r, \tilde{X}_r^{s,y,\mu})\cdot D_x\tilde{X}_r^{s,y,\mu}+D_{\mu}\Sigma^0(X_r^{s,\mu},\mu_r, \tilde{X}_r^{s,\mu})\cdot \tilde{U}_r^{s,\L (\tilde{\eta})}(y)]dB^0_r\nonumber\\
&+\int_s^t D_x\Sigma^1(X_r^{s,\mu},\mu_r)U_r^{s,\mu}(y)dW_r+\int_s^t D_x\Sigma^0(X_r^{s,x,\mu},\mu_r)U_r^{s,\mu}(y)dB^0_r,\label{SDE B4}
\end{align}
where $\eta\in L^2(\F^1_s)$ with $\L(\eta)=\mu$, $\mu_t=\L(X_t^{s,\mu}|\F^0_t)$, $\tilde{\eta}$ is an independent copy of $\eta$ defined over a new probability space $(\tilde{\Omega}^1,\tilde{\F}^1,\tilde{\mathbb{P}}^1)$ and the process $(\tilde{U}^{s,\L(\tilde{\eta})},W)$ is supposed to follows under $\tilde{\mathbb{P}}^1$ the same law as $(U^{s,\L(\eta)},W)$.
Then, for every $y\in \R$, the Wasserstein derivative $D_{\mu}X_t^{s,x,\mu}(y)$, $y\in \R$, exists and is equal to $U_t^{s,x,\mu}(y)$. Moreover, for all $p\geq 2$, there exists a constant $C_p>0$ such that for all $x,x',y,y'\in\R$ and $\eta,\eta'\in L^2(\F_s)$ with $\L(\eta)=\mu,\; \L(\eta ')=\mu'$ the following estimates hold 
\begin{align}
&\mathbb{E}\bigg[ \sup_{t\in[s,T]}|D_{\mu}X_t^{s,x,\mu}(y)|^p\bigg]\leq C_p,\label{measure der estimate 1}\\
&\mathbb{E}\bigg[ \sup_{t\in[s,T]}( |D_{\mu}X_t^{s,x,\mu}(y)-D_{\mu}X_t^{s,x',\mu'}(y') |^p \bigg]\leq C_p( |x-x'|^p+|y-y'|^p+\dw(\mu,\mu')^p).\label{measure der estimate 2}
\end{align}
\end{prop}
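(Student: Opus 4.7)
The proof proceeds in three stages: first, establish well-posedness of the coupled system \eqref{SDE B3}--\eqref{SDE B4} together with the $L^p$-bound \eqref{measure der estimate 1}; second, identify $U_t^{s,x,\mu}(y)$ with the Wasserstein intrinsic derivative $D_{\mu}X_t^{s,x,\mu}(y)$ by a lifted difference-quotient argument; third, prove the stability estimate \eqref{measure der estimate 2}. The key observation that makes the whole strategy workable is that \eqref{SDE B4} is in fact obtained from \eqref{SDE B3} by taking $x\rightsquigarrow \eta$ with $\L(\eta)=\mu$, so it is a McKean--Vlasov SDE for $U^{s,\mu}$ alone, which can be solved first, and \eqref{SDE B3} then becomes a linear SDE with a bounded inhomogeneous source.

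\textbf{Well-posedness and the bound \eqref{measure der estimate 1}.} I would first solve \eqref{SDE B4} alone as a McKean--Vlasov SDE for $r\mapsto U_r^{s,\mu}(y)$ with $y$ and $\mu$ fixed parameters. Since $B,\Sigma^1,\Sigma^0 \in \mathcal{M}_{b,L}^{\hat{\bm{\zeta}}}$ and, by Proposition \ref{x differentiability}, $D_x\tilde{X}_r^{s,y,\mu}$ has moments of all orders, the coefficients driving $U^{s,\mu}$ are globally Lipschitz and of linear growth in the unknown and its law. A Picard iteration on the space of square-integrable adapted processes, combined with BDG and Gronwall, produces a unique solution and yields $\mathbb{E}[\sup_{t\in[s,T]}|U_t^{s,\mu}(y)|^p]\le C_p$. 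Once $U^{s,\mu}$ is constructed, \eqref{SDE B3} reduces to a linear SDE in $U_t^{s,x,\mu}(y)$ with a bounded inhomogeneous source, which is solved by the same argument and yields \eqref{measure der estimate 1}.

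\textbf{Identification of the derivative.} Fix $\eta\in L^2(\F_s^1;\R)$ with $\L(\eta)=\mu$ and a perturbation $\xi\in L^2(\F_s^1;\R)$, and set $Z_t^h := h^{-1}(X_t^{s,x,\L(\eta+h\xi)} - X_t^{s,x,\mu})$. Using the Lipschitz flat-derivative of $B,\Sigma^1,\Sigma^0$ in the measure argument together with the independent copy $(\tilde{\Omega}^1,\tilde{\F}^1,\tilde{\mathbb{P}}^1)$ to evaluate the measure-variation terms, and closing the estimate by BDG and Gronwall, I expect to obtain
$$\lim_{h\to 0}\mathbb{E}\Bigl[\sup_{t\in[s,T]}\bigl|Z_t^h - \tilde{\mathbb{E}}^1[U_t^{s,x,\mu}(\tilde{\eta})\tilde{\xi}]\bigr|^2\Bigr]=0.$$
Combined with the joint continuity of $(x,y,\mu)\mapsto U_t^{s,x,\mu}(y)$ from the stability step below, Proposition 5.48 of \cite{CarmonaDelarue} and Remark \ref{Wass rem}(iii) then identify this lifted Fr\'echet derivative with the intrinsic Wasserstein derivative, giving $D_{\mu}X_t^{s,x,\mu}(y)=U_t^{s,x,\mu}(y)$.

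\textbf{Stability and the main obstacle.} For \eqref{measure der estimate 2} I would again treat \eqref{SDE B4} first, writing the difference $U_t^{s,\mu}(y)-U_t^{s,\mu'}(y')$ and bounding the variation of each coefficient term by $|y-y'|^p+\dw(\mu,\mu')^p$ via (H3)--(H5), the estimates \eqref{carm 1}--\eqref{carm 3}, and \eqref{x estimate 2}. A Gronwall argument then controls the $L^p$-norm of this difference, after which substituting into \eqref{SDE B3} and repeating with an additional $|x-x'|^p$ term produces \eqref{measure der estimate 2}. The main obstacle is the McKean--Vlasov coupling in \eqref{SDE B4}: the nonlocal term $\tilde{\mathbb{E}}^1[\,\cdot\,\tilde{U}_r^{s,\L(\tilde{\eta})}(y)]$ depends on the law of the unknown, so both existence and the stability bound must close on a single scalar quantity such as $\sup_{y,\mu}\mathbb{E}[\sup_t|U_t^{s,\mu}(y)-U_t^{s,\mu'}(y')|^p]$ before the measure-variation term can be absorbed into the Gronwall inequality. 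Careful bookkeeping of the uniformity of constants in $(x,y,\mu)$ is essential at every step.
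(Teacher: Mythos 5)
Your proposal is correct and follows essentially the same route as the paper: the paper likewise introduces the Gateaux difference quotient, shows via BDG/Gronwall (borrowing the decomposition from Lemma 4.2 of \cite{associated SDEs}) that it converges in $L^2$ to $\tilde{\mathbb{E}}^1[U_t^{s,x,\mu}(\tilde{\eta})\tilde{\xi}]$, identifies this with the solution of \eqref{SDE B3}--\eqref{SDE B4} by a Fubini/uniqueness argument, and upgrades to the Fr\'echet (hence Lions/Wasserstein) derivative using the Lipschitz stability estimate, exactly as you outline. Your explicit observation that \eqref{SDE B4} closes on itself as a McKean--Vlasov SDE and that \eqref{SDE B3} is then linear is the same structural fact the paper relies on implicitly when asserting uniqueness of the system.
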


\begin{proof}
In the proof, for simplicity, we assume that $B=0$ and that $\Sigma^1,\Sigma^0$ depend only on the measure variable. The extension to the more general case stated above follows easily by doing the analogous calculations.\\

Let $\eta,\;\xi\in L^2(\F^1_s)$ and $\tilde{\eta},\; \tilde{\xi}$ be copies of $\eta,\;\xi$ defined over $(\tilde{\Omega}^1,\tilde{\F}^1,\tilde{\mathbb{P}}^1)$.

\textit{Step 1:} We first show that for all $(t,x)\in [s,T]\times \R$ the $L^2$- G\^ateaux derivative 
$$L^2-\lim_{h\rightarrow 0}\frac{1}{h}(X_t^{s,x,\L(\eta+h\xi)}-X_t^{s,x,\L(\eta)})$$
exists. In fact, we will show that if $Z_t^{s,x,\L(\eta)}(\xi)$ is the solution of the SDE
\begin{align}
Z_t&^{s,x,\L(\eta)}(\xi)=\int_{s}^t \tilde{\mathbb{E}}^1[ D_{\mu}\Sigma^1( \L(X_r^{s,\L(\eta)}|\F^0_r), \tilde{X}_{r}^{s,\L(\tilde{\eta})})(D_xX_r^{s,y,\L(\tilde{\eta})}\tilde{\xi}+\tilde{Z}_r^{s,y,\L(\tilde{\eta})}(\tilde{\xi}))\bigg|_{y=\tilde{\eta}} ] dW_r\nonumber\\
&+\int_{s}^t \tilde{\mathbb{E}}^1[ D_{\mu}\Sigma^0( \L(X_r^{s,\L(\eta)}|\F^0_r), \tilde{X}_{r}^{s,\L(\tilde{\eta})})(D_xX_r^{s,y,\L(\tilde{\eta})}\tilde{\xi}+\tilde{Z}_r^{s,y,\L(\tilde{\eta})}(\tilde{\xi}))\bigg|_{y=\tilde{\eta}} ] dB^0_r, \label{SDE Z}
\end{align}
then
\be \label{estimate frechet}
\mathbb{E}\bigg[ \sup_{t\in[s,T]}|X_t^{s,x,\L(\eta+h\xi)}-X_t^{s,x,\L(\eta)}-hZ_{t}^{s,x,\L(\eta)}\bigg]\leq Ch^4\mathbb{E}[\xi^2]^2
\ee
for some constant $C>0$. Indeed, due to estimates \eqref{carm 1}, \eqref{carm 2} and \eqref{carm 3}, by the calculation done in Lemma 4.2 of \cite{associated SDEs} we have
\begin{align}
&X_t^{s,x,\L(\eta+h\xi)}-X_t^{s,x,\L(\eta)}-hZ_t^{s,x,\L(\eta)}= R(t,x,h)\label{first}\\
&+\int_s^t\tilde{\mathbb{E}}^1[ D_{\mu}\Sigma^1(\L(X_r^{s,\L(\eta)}|\F^0_r), \tilde{X}_{r}^{s,\L(\tilde{\eta})})(\tilde{X}_r^{s,\L(\tilde{\eta}+h\tilde{\xi})}-\tilde{X}_r^{t,\L(\tilde{\eta})}-h(D_x\tilde{X}_r^{t,z,\L(\eta)}\tilde{\xi}+\tilde{Z}_r^{t,z,\L(\tilde{\eta})}(\tilde{\xi}))\bigg|_{z=\tilde{\eta}}]dW_r\nonumber\\
&+\int_s^t\tilde{\mathbb{E}}^1[ D_{\mu}\Sigma^0(\L(X_r^{s,\L(\eta)}|\F^0_r), \tilde{X}_{r}^{s,\L(\tilde{\eta})})(\tilde{X}_r^{s,\L(\tilde{\eta}+h\tilde{\xi})}-\tilde{X}_r^{t,\L(\tilde{\eta})}-h(D_x\tilde{X}_r^{t,z,\L(\eta)}\tilde{\xi}+\tilde{Z}_r^{t,z,\L(\tilde{\eta})}(\tilde{\xi}))\bigg|_{z=\tilde{\eta}}]dB^0_r,\nonumber
\end{align}
where $R(t,x,h)$ satisfies 
$$\mathbb{E}\bigg[ \sup_{t\in [s,T]} |R(t,x,h)|^2\bigg] \leq Ch^4\mathbb{E}[ |\xi|^2]^2.$$
Using this, to prove \eqref{estimate frechet}, due to the boundness of $D_{\mu}\Sigma^1$ and $D_{\mu}\Sigma^0$ we only need to bound the expected value of the square of the quantity
$$\tilde{X}_r^{s,\L(\tilde{\eta}+h\tilde{\xi})}-\tilde{X}_r^{s,\L(\tilde{\eta})}-h(D_x\tilde{X}_r^{s,z,\L(\eta)}\tilde{\xi}+\tilde{Z}_r^{s,z,\L(\tilde{\eta})}(\tilde{\xi}))\bigg|_{z=\tilde{\eta}}:= I_1(r,h)+I_2(r,h).$$
We have
\begin{align*}
\tilde{\mathbb{E}}[|I_1(r,h)|^2]&=\tilde{\mathbb{E}}[|\tilde{X}_r^{s,z,\L(\tilde{\eta}+h\tilde{\xi})}-\tilde{X}_r^{s,\L(\tilde{\eta})}-h\tilde{Z}_r^{s,z,\L(\tilde{\eta})}(\tilde{\xi}))|^2\bigg|_{z=\tilde{\eta}}]\\
&\leq \sup_{x\in\R}\tilde{\mathbb{E}}[|\tilde{X}_r^{s,x,\L(\tilde{\eta}+h\tilde{\xi})}-\tilde{X}_r^{s,x,\L(\tilde{\eta})}-h\tilde{Z}_r^{s,x,\L(\tilde{\eta})}(\tilde{\xi}))|^2]:=A(r)\quad\quad \text{and}\\
&\hspace{-2cm}\tilde{\mathbb{E}}[|I_2(r,h)|^2]=\tilde{\mathbb{E}}[|X_r^{s,\eta+h\xi,\L(\eta+h\xi)}-X_r^{s,\eta,\L(\eta+h\xi)}-hD_xX_r^{s,\eta,\L(\eta)}|^2]\\
&\leq Ch^4\mathbb{E}[|\eta|^2]^2,
\end{align*}
where in the last inequality we used the fundamental theorem of calculus for the first two terms and \eqref{x estimate 2}. We combine these inequalities with \eqref{first} and we get
$$A(r)\leq Ch^4\mathbb{E}[|\eta|^2]^2 + C\int_s^tA(r)dr.$$
The result now follows from Gr\"onwall's inequality.\\

\textit{Step 2:} Let $\hat{\eta},\hat{\xi}$ be independent copies of $\eta,\;\xi$ and $\tilde{\eta},\;\tilde{\xi}$ defined over a new probability space $(\hat{\Omega},\hat{\F},\hat{\mathbb{P}})$. We claim that
$$Z_t^{s,x,\L(\eta)}(\xi)=\hat{\mathbb{E}}^1[U_t^{s,x,\mu}(\hat{\eta})\hat{\xi}],\;\mathbb{P}-\text{a.s}.$$
The proof is a standard calculation involving Fubini's theorem. Indeed, we can easily see that $Z_t^{s,\L(\eta)}(\xi):=Z_t^{s,x,\L(\eta)}(\xi)\bigg|_{x=\eta}$ satisfies the same SDE as $\hat{\mathbb{E}}^1[U_t^{s,\L(\eta)}(\hat{\eta})\hat{\xi}]$, which is \eqref{SDE Z} when $x=\eta$, therefore, by uniqueness, we must have $Z_t^{s,\L(\eta)}(\xi)=\hat{\mathbb{E}}^1[U_t^{s,\L(\eta)}(\hat{\eta})\hat{\xi}]$ , $\mathbb{P}$-a.s. Using this, we can calculate that $Z_t^{s,x,\L(\eta)}(\xi)$ satisfies SDE as $\hat{\mathbb{E}}^1[U_t^{s,x,\mu}(\hat{\eta})\hat{\xi}]$. The conclusion follows.\\

\textit{Step 3:} Let $p\geq 2$, $x,x',y,y'\in\R$ and $\eta,\eta'\in L^2(\F^1_s)$ with $\L(\eta)=\mu,\; \L(\eta ')=\mu'$. By combining \eqref{x estimate 1}, \eqref{x estimate 2} and \eqref{carm 3} with standard arguments involving Gr\"onwall's and BDG inequalities, we deduce that there exists a constant $C_p>0$ such that
\begin{align}
&\mathbb{E}\bigg[ \sup_{t\in[s,T]}|U_t^{s,x,\mu}(y)|^p\bigg]\leq C_p,\label{measure der estimate 3}\\
&\mathbb{E}\bigg[ \sup_{t\in[s,T]}( |U_t^{s,x,\eta}(y)-U_t^{s,x',\eta'}(y') |^p \bigg]\leq C_p( |x-x'|^p+|y-y'|^p+\dw(\mu,\mu')^p).\label{measure der estimate 4}
\end{align}
We can also easily prove that SDE \eqref{SDE B4} and estimates \eqref{carm 1}, \eqref{carm 3} and \eqref{x estimate 1}, \eqref{x estimate 2} imply
\begin{align}
\mathbb{E}\bigg[ \sup_{t\in [s,T]}|U_t^{s,\mu}(y) -U_t^{s,\mu'}(y')|^p      \bigg]\leq C_p(|y-y'|^p+\dw(\mu, \mu')^p) \label{measure der estimate 5}
\end{align}
By the relation proved in step 2 and  \eqref{measure der estimate 4}, we can easily deduce that the G\^ateaux derivative $Z_t^{s,x,\L(\eta)}$ is continuous in $\eta$, therefore it is the Fr\'echet derivative. Thus, the map $\eta\mapsto X_t^{s,x,\eta}=X_t^{s,x,\L(\eta)}$ is Fr\'echet differentiable, so we can define the Lions derivative of the map $\mu\mapsto X_t^{s,x,\mu}$. By step 2, we have $D_{\mu}X_t^{s,x,\mu}(y)= U_t^{s,x,\mu}(y)$. The proof is complete.
\end{proof}

\begin{rmk}
(i) From estimate \eqref{measure der estimate 4}, we see that $U^{s,x,\eta}$ depends on $\eta$ only through $\L(\eta)$, hence we have the identification $U_t^{s,x,\L(\eta)}=U_t^{s,x,\eta}$ that we were using since the beginning throughout the proof. The same is true for $U_t^{s,\mu}=U_t^{s,\eta}$, due to \eqref{measure der estimate 5}.\\
(ii) Let $y\in \R$. Substituting $\eta$ for $x$ in \eqref{SDE B3}, we deduce from the uniqueness of the solution that $U_t^{s,\L(\eta)}(y)=U_t^{s,x,\L(\eta)}(y)\bigg|_{x=\eta}$, $\mathbb{P}-$a.s. Using the same argument, we can also substitute $\F^1_s$-measurable random variables for $y$ in $U_t^{s,\L(\eta)}(y)$.
\end{rmk}

\begin{cor} \label{key diff}
Let $X_t^{s,\mu}$ be as in \eqref{controlled McKean}. Then, for any $l\in \{0,1,2,3,4\}$ the map $\mu\mapsto D_x^lX_t^{s,x,\mu}$ satisfies the property that for any multi-index $(n,\bm{\beta})$ such that $k=|(n,\bm{\beta})|\leq 4-l$, $D^{(n,\bm{\beta})}D_x^lX_t^{s,x,\mu}$ exists. In addition, if $p\geq 2$ this derivative satisfies
\begin{align}
&\mathbb{E}\bigg[\sup_{t\in [s,T]}|D^{(n,\bm{\beta})}D_x^lX_t^{s,x,\mu}(z_1,...,z_n)|^p\bigg ]\leq C_p, \label{mixed estimate 1}\\
&\mathbb{E}\bigg[ \sup_{t\in [s,T]}|D^{(n,\bm{\beta})}D_x^lX_t^{s,x,\mu}(z_1,...,z_n)-D^{(n,\bm{\beta})}D_x^lX_t^{s,x',\mu'}(z_1',...,z_n')|^p\bigg]\nonumber\\
&\hspace{4cm}\leq C_p(|x-x'|^p+\sum_{i=1}^n|z_i-z_i'|^p+\dw(\mu,\mu')^p)\label{mixed estimate 2}
\end{align}
for  any $x,x',z_1,z_1',...,z_n,z_n'\in \R$ and $\mu,\;\mu'\in \mathcal{P}_2(\R)$ for some constant $C_p>0$.
\end{cor}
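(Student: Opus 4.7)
The plan is to proceed by induction on the total order $N := k + l$, for $0 \leq N \leq 4$. The base cases $N \leq 1$ are already established: $l = k = 0$ is just \eqref{carm 1}--\eqref{carm 3}; $l = 1, k = 0$ is the content of Proposition~\ref{x differentiability} together with \eqref{x estimate 1}--\eqref{x estimate 2}; and $l = 0$, $k = 1$ follows from Proposition~\ref{measure derivative} together with \eqref{measure der estimate 1}--\eqref{measure der estimate 2}. For the inductive step, assuming existence and the bounds for all multi-indices of total order strictly less than $N$, I will formally differentiate the SDEs \eqref{controlled SDE}, \eqref{SDE}, \eqref{SDE B3} once more, either in $x$, or in $\mu$ in the Lions sense, or in one of the free points $z_i$, to obtain a linear SDE for $D^{(n,\bm{\beta})} D_x^l X_t^{s,x,\mu}$.

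The structure of the resulting SDE is the key point. Each differentiation in $x$ produces a term with coefficient $D_x B$, $D_x \Sigma^1$, $D_x \Sigma^0$ (bounded by the assumption $B,\Sigma^1,\Sigma^0\in\mathcal{M}_{b,L}^{\hat{\bm{\zeta}}}$), while each Wasserstein differentiation introduces an independent tilde-copy exactly as in \eqref{SDE B3}--\eqref{SDE B4}. Hence the top-order derivative solves a linear SDE of the form
\begin{equation*}
dY_t = a_t Y_t\, dt + b_t Y_t\, dW_t + c_t Y_t\, dB^0_t + F_t\, dt + G_t\, dW_t + H_t\, dB^0_t,
\end{equation*}
where $a_t, b_t, c_t$ are bounded processes built from first-order derivatives of the coefficients, and the forcing processes $F_t, G_t, H_t$ are polynomial expressions in strictly lower-order mixed derivatives of $X^{s,\cdot,\cdot}$ and of its tilde-copies, evaluated at the various $z_i$. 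Existence and uniqueness of such a linear SDE follow by a standard Picard iteration in $L^p$, and the identification of this solution with the actual Lions/Wasserstein derivative is done as in Steps~1--2 of the proof of Proposition~\ref{measure derivative}: a Gâteaux-type expansion combined with Fubini to interchange the tilde-expectation and the differentiation.

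For the moment bound \eqref{mixed estimate 1}, I apply the Burkholder--Davis--Gundy inequality to the stochastic integrals and Jensen's inequality to bring the tilde-expectations inside the $L^p$-norm; the forcing $(F_t, G_t, H_t)$ is controlled in $L^p$ by the inductive hypothesis (recall the coefficients of the SDE are in $\mathcal{M}_{b,L}^{\hat{\bm{\zeta}}}$ with $|\hat{\bm{\zeta}}|$ up to order $4$, matching $N \leq 4$), so Grönwall delivers the required uniform-in-$t$ bound. For the Lipschitz-type estimate \eqref{mixed estimate 2}, I subtract the linear SDEs corresponding to the two parameter sets $(x, z_1, \dots, z_n, \mu)$ and $(x', z_1', \dots, z_n', \mu')$; the increment of the top-order derivative again solves a linear SDE whose forcing is now controlled by (i) the Lipschitz increments of $a_t, b_t, c_t$, which, by $B,\Sigma^1,\Sigma^0\in\mathcal{M}_{b,L}^{\hat{\bm{\zeta}}}$, are bounded by $|x-x'| + \dw(\mu,\mu') + \sum_i|z_i-z_i'|$ up to the lower-order increments covered by the inductive hypothesis and \eqref{carm 2}, and (ii) the increments of $F,G,H$, again of strictly lower order. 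A further BDG plus Grönwall closes the induction.

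The main obstacle is the combinatorial bookkeeping that arises from iterating the Wasserstein differentiation: each application of $D_\mu$ at a point $z_i$ creates a new independent tilde-copy $\tilde{X}^{s,z_i,\mu}$, and via the chain rule the forcing terms $F_t, G_t, H_t$ end up being sums of products (over partitions of the multi-index) of mixed derivatives of the coefficients and of $X^{s,\cdot,\cdot}$, of every total order strictly below $N$, evaluated at nested tilde-variables. The saving grace is that differentiation preserves the polynomial-in-lower-derivatives structure and strictly reduces the order, so the inductive hypothesis applies term by term, and the regularity of the coefficients up to order $4$ guarantees that all coefficient-derivatives entering are bounded and Lipschitz in $(x,\mu)$.
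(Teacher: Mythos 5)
Your proposal is correct and follows essentially the same route as the paper, which handles the higher-order cases by invoking the inductive argument of Theorem 3.4 of \cite{weak quant} (adapted to the common noise) for $l=0$, $k\in\{2,3,4\}$, and by reapplying the argument of Proposition \ref{measure derivative} to $D_x^lX_t^{s,x,\mu}$ for $l\geq 1$. Your induction on the total order $N=k+l$, with the linear-SDE structure, the G\^ateaux/Fubini identification of the derivative, and the BDG--Gr\"onwall estimates, is precisely the content of that cited argument spelled out in detail.
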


\begin{proof}
For $l=0$ and $k=0$ the result follows by relations \eqref{carm 1}, \eqref{carm 3}. For $l=0$ and $k=1$ the result follows by Proposition \ref{measure derivative}. For $l=0$ and $k\in \{2,3,4\}$ the result follows by introducing the adaptation we made in the previous Propositions involving the common noise to the inductive argument made in Theorem 3.4 of \cite{weak quant}. For $l\in \{1,2,3,4\}$ the result follows by the same argument as in Proposition \ref{measure derivative} applied to $D_x^lX_t^{s,x,\mu}$ and the estimates from Proposition \ref{x differentiability}.
\end{proof}

We are now ready to prove Theorem \ref{main appendix}.

\begin{proof}
%Let $\eta,\; \xi\in L^2(\F^1_s)$. We start by calculating the Gateaux derivative 
%\begin{align*}
%D_h X_t^{s,\L(\eta+h\xi)}&= D_h\bigg( X_t^{s,x,\L(\eta+h\xi)}\bigg|_{x=\eta+h\xi}\bigg)\\
%&=\bigg( D_xX_t^{s,x,\L(\eta+h\xi)}\bigg|_{x=\eta+h\xi}\bigg)\xi+\lim_{\delta\rightarrow 0}\frac{X_t^{s,x,\L(\eta+(h+\delta)\xi)}- X_t^{s,x,\L(\eta+h\xi)} }{\delta}\bigg|_{\eta+h\xi}.
%\end{align*}
%Therefore, 
%$$D_{\eta}(X_t^{s,\L(\eta)})(\xi)=D_xX_t^{s,x,\L(\eta)}\bigg|_{x=\eta}\xi+D_{\eta}(X_t^{s,\L(\eta)})(\xi)$$
Let $t\in [0,T]$, $\eta\in L^2(\F^1_t)$ with $\L(\eta)=\mu$ and $\xi\in L^2(\F^1_t)$. \\
\textit{Step 1:} Our first step is to show that for any multi-index $(n,\bm{\beta})$ such that $|(n,\bm{\beta})|\leq 4$ the derivative $D^{(n,\bm{\beta})}v(t,\mu,y_1,...,y_n)$ exists and it is bounded by $C||G||_{\mathcal{M}^{\bm{\dot{\zeta}}}_{b}}$ for some constant $C$ (independent of $(n,\bm{\beta})$ and Lipschitz (with respect to $\mu$ and $y_1,...,y_n$). This will give us $v(t,\cdot)\in \mathcal{M}_{b,L}^{\bm{\dot{\zeta}}}$ and the estimate on $||v||_{\mathcal{M}_{b}^{\bm{\dot{\zeta}}}}$.\\

We observe that for $h\in \R$ and $s\in [t,T]$, due to Propositions \ref{x differentiability} and \ref{measure derivative}, we have
\begin{align*}
D_hX_s^{t,\L (\eta+ h\xi)}&= D_h\bigg ( X_s^{t,x,\L (\eta+ h\xi)}\bigg|_{x=\eta+h\xi}\bigg)\\
&= \bigg( D_x X_s^{t,x,\L(\eta+h\xi)}\bigg|_{x=\eta+h\xi}\bigg) \xi+\bigg( \lim_{r\rightarrow 0}\frac{X_s^{t,x,\L(\eta+(h+r)\xi)}- X_s^{t,x,\L(\eta+h\xi)}}{r}\bigg) \bigg|_{x=\eta+h\xi}.
\end{align*}
Thus, the map $\eta \mapsto X_s^{t,\L(\eta)}$ is Fr\'echet differentiable with directional derivative
$$D_{\eta}(X_s^{t,\L(\eta)})(\xi)=D_hX_s^{t,\L (\eta+ h\xi)}\bigg|_{h=0}= \bigg( D_x X_s^{t,x,\L(\eta)}\bigg|_{x=\eta}\bigg) \xi +D_{\eta}(X_s^{t,x,\L(\eta)})(\xi)\bigg| _{x=\eta}.$$
We define the lifts $G':\L^(\F_T^1)\rightarrow \R$ and $v'(t,\cdot): \L^2(\F^1_t)\rightarrow \R$ of $G,\; v(,\cdot)$, respectively, which are given by $G'(\eta)= G(\L(\eta))$ and $v'(t,\eta)=v(t,\L(\eta))$. Therefore, by the chain rule of Fr\'echet differentiation we have
\begin{align*}
D_{\eta}&v(t,\eta)(\xi)=\mathbb{E}^0\bigg[ DG'(X_T^{t,\L(\eta)})(D_{\eta}X_T^{t,\L(\eta)}(\xi)) \bigg]\\
&=\mathbb{E}^0\bigg[ \mathbb{E}^1\bigg[  D_{\mu}G(\L(X_T^{t,\L(\eta)}|\F_T^0), X_T^{t,\L(\eta)}) \bigg(\bigg( D_x X_s^{t,x,\L(\eta)}\bigg|_{x=\eta}\bigg) \xi +D_{\eta}(X_s^{t,x,\L(\eta)})(\xi)\bigg| _{x=\eta}\bigg)        \bigg]   \bigg]\\
&= \mathbb{E}^0\bigg[ \mathbb{E}^1\bigg[  D_{\mu}G(\L(X_T^{t,\L(\eta)}|\F_T^0), X_T^{t,\L(\eta)}) \bigg(\bigg( D_x X_s^{t,x,\L(\eta)}\bigg|_{x=\eta}\bigg) \xi +\tilde{\mathbb{E}}[D_{\mu}X_T^{t,x,\L(\eta)}(\tilde{\eta})\bigg|_{x=\eta}\tilde{\xi}]\bigg) \bigg] \bigg]\\
&=\tilde{\mathbb{E}}^1\bigg[\mathbb{E}^0\bigg[\mathbb{E}^1\bigg[  D_{\mu}G(\L(X_T^{t,\L(\eta)}|\F_T^0), X_T^{t,x,\L(\eta)}) D_x X_s^{t,x,\L(\eta)}\bigg|_{x=\tilde{\eta}} \tilde{\xi}\bigg] \bigg]\bigg]\\
&\quad\quad\quad+ \tilde{\mathbb{E}}^1\bigg[\mathbb{E}^0\bigg[\mathbb{E}^1\bigg[
D_{\mu}G(\L(X_T^{t,\L(\eta)}|\F_T^0), X_T^{t,\L(\eta)})D_{\mu}X_T^{t,x,\L(\eta)}(\tilde{\eta})\bigg|_{x=\eta}
\bigg] \bigg]\tilde{\xi}\bigg],
\end{align*}
where in the last equality we used that $X^{t,x,\L(\eta)}_T$ is independent of $\F^1_t$, $\eta,\; \xi$ are $\F^1_t$-measurable and that $(\tilde{\eta}, \tilde{\xi})$ is independent of $(\eta,\xi)$ and of the same law under $\tilde{\mathbb{P}}$ as $(\eta,,\xi)$ under $\mathbb{P}$. This equality implies
\be \label{first derivative for final}
D_{\mu}v(t,\mu,y)=\mathbb{E}^0\bigg[\mathbb{E}^1[ D_{\mu}G(\L(X_T^{t,\mu}|\F^0_T),X_T^{t,y,\mu})D_xX_T^{t,y,\mu}+D_{\mu}G(\L(X_T^{t,\mu}|\F^0_T),X_T^{t,\mu})U_T^{t,\mu}(y)]\bigg],
\ee
where $U_T^{t,\mu}$ is defined in Proposition \ref{measure derivative}. Since $G\in \mathcal{M}_b^{\bm{\dot{\zeta}}}$ and \eqref{x estimate 1}, \eqref{x estimate 2}, \eqref{measure der estimate 1} and \eqref{measure der estimate 5} hold, we see that $D_{\mu}v(t,\mu,y)$ is bounded by $C||G||_{\mathcal{M}^{\bm{\dot{\zeta}}}_{b,L}}$ (for some constant $C$) and is also Lipschitz with respect to $\mu$ and $y$. \\
To verify the differentiability of higher order we may use Corollary \ref{key diff}. Indeed, to compute the higher order derivatives with respect to the measure variable, we lift $D_{\mu}G(\cdot,y),\;D_{\mu}v(t,\cdot,y)$ to functions $(D_{\mu}G)'(\cdot,y),\;(D_{\mu}v)'(t,\cdot,y)$ defined over $L^2(\F^1_t)$ so that 
\begin{multline*}
(D_{\mu}v)'(t,\eta,y)=\mathbb{E}^0\bigg[\mathbb{E}^1[ (D_{\mu}G)'(X_T^{t,\L(\eta)},X_T^{t,y,\L(\eta)})D_xX_T^{t,y,\L(\eta)}+\\(D_{\mu}G)'(X_T^{t,\L(\eta)},X_T^{t,\L(\eta)})D_{\mu}X_T^{t,x,\L(\eta)}(y)\bigg|_{x=\eta}]\bigg],
\end{multline*}
and, as previously, we compute the Fr\'echet derivative, which will then give us $D^2_{\mu\mu}v$. To compute $D^3_{\mu\mu\mu}v$ and $D^4_{\mu\mu\mu\mu}v$, we follow the same procedure. The computation of the derivatives with respect to the rest of the variables is straightforward by Corollary \ref{key diff}. The fact that the higher order derivatives will be bounded and Lipschitz follows from \eqref{mixed estimate 1} and \eqref{mixed estimate 2}.\\

%we differentiate \eqref{first derivative for final} with respect to $\mu$ \todo{specify what we do with $\eta$ refer to relation giving the frechet derivative $X_s^{t,\mu}$}. The existence of such a derivative for the right hand side is guaranteed by Corollary \ref{key diff}. The fact that the higher order derivatives will be bounded and Lipschitz follows from \eqref{mixed estimate 1} and \eqref{mixed estimate 2}.\\

\noindent
\textit{Step 2:} Our second step is to prove that $v(t,\mu)$ is differentiable with respect to $t$ and that the derivative is bounded. Indeed, by the uniqueness of the conditional distribution we have
\begin{align*}
v(t,\mu)=\mathbb{E}^0\bigg[ G(\L(X_{T-t}^{\mu}|\F^0_{T-t})) \bigg].
\end{align*}
The desired differentiability now follows from It\^o's formula (Theorem 4.14 of \cite{carmdel 2}), which provides us with a formula for $\dot{v}$ and, furthermore,
$$\sup_{t\in [0,T],\;\mu\in \mathcal{P}_2(\R)}|\dot{v}(t,\mu)|\leq C||G||_{\mathcal{M}_b^{\bm{\dot{\zeta}}}}$$
follows from the boundness of $B,\; \Sigma^1,\; \Sigma^0$ and the boundness of $G$ and its derivatives.\\

\textit{Step 3:} We now show that $v$ is the unique classical solution to \eqref{MFC app}. Indeed, let $\mu\in \mathcal{P}_2(\R)$, $t\in [0,T]$, $V$ satisfying \eqref{MFC app} in the classical sense and $\mu_{s}=\L(X_{s}^{t,\mu}|\F^0_s)$. Then, by It\^o's formula we have
\begin{align*}
\mathbb{E}^0\bigg[V(T,\mu_T)&-V(t,\mu_t)\bigg]=\int_t^T\mathbb{E}^0\bigg[ -\dot{V}(s,\mu_s) -\int_{\R}B(x,\mu_s)\cdot D_\mu V(s,\mu_s,x)\mu_s(dx)\bigg]ds\\
&+ \int_t^T\mathbb{E}^0\bigg[\frac{1}{2}(\Sigma^1\Sigma^{1\top}(x,\mu)+\Sigma^0\Sigma^{0\top} (x,\mu_s)):D_x D_\mu V(s,\mu_s,x))\mu_s(dx)\bigg] ds\nonumber\\ 
& -\int_t^T\mathbb{E}^0\bigg[\frac{1}{2}\int_{\R^{2}}\Sigma^0(x,\mu_s)\Sigma^{0\top} (x',\mu_s): D^2_{\mu\mu}v(t,\mu_s,x,x')\mu_s(dx)\mu_s(dx')\bigg] ds.
\end{align*}
Since $V$ satisfies \eqref{MFC app}, the right hand side is $0$, so
$$V(t,\mu)= \mathbb{E}^0[G(\mu_T)]=\mathbb{E}^0[G(\L(X_T^{t,\mu}|\F^0_T))]=v(t,\mu).$$

\textit{Step 4:} We finally show estimate \eqref{time measure der estimate}.
Since $v\in \mathcal{M}_{b,L}^{\bm{\dot{\zeta}}}$ and $v$ satisfies \eqref{MFC app} (step 3) , we have that the derivatives $D_{\mu}\dot{v}(t,\mu,x), D_xD_{\mu}\dot{v}(t,\mu,x)$ and $D^2_{\mu\mu}\dot{v}(t,\mu,x_1,x_2)$ exist and can be computed by differentiating 
\begin{align*}
&-\int_{\R^d}\bigg( B(x,\mu)\cdot D_{\mu}v(t,\mu,x) +\frac{1}{2}(\Sigma^1\Sigma^{1\top}+\Sigma^0\Sigma^{0\top})(s,\mu):D_xD_{\mu}v(t,\mu,x)\bigg) \mu(dx)\\
&\hspace{1.5cm}-\frac{1}{2}\int_{\R^{2d}} \Sigma^0(x,\mu)\Sigma^{0\top}(x',\mu):D^2_{\mu\mu}v(t,\mu,x,x')\mu(dx)\mu(dx').
\end{align*}
Estimate \eqref{time measure der estimate} now follows from the boundness of $B,\; \Sigma^1,\; \Sigma^0$ and their derivatives and the boundness of $||v||_{\mathcal{M}_b^{\bm{\dot{\zeta}}}}$ derived in Step 1.

\section{On the choice of the Assumptions}
In this section we will provide some comments related to Remark \ref{remark for appendix}(vi). \\

We assume that in \eqref{eq:slow-fastMcKeanVlasov} instead of $\frac{\lambda}{\ep}f(X^{\ep}_t,Y^{\ep}_t,\L(X_t^{\ep}|\F_t^0))$ we have $\frac{1}{\ep}b(X^{\ep}_t,Y^{\ep}_t,\L(X_t^{\ep}|\F_t^0))$ for some function $b\in \mathcal{M}_p^{\hat{\bm{\zeta}}}$ such that $\bar{b}(x,\mu)=0$ (as in \cite{Spil}). For simplicity, we also assume that $c=g=0$. We consider the function $\Phi\in \mathcal{M}_p^{\hat{\bm{\zeta}}}(\R^d\times\R^d\times\Prob; \R^d)$ which is such that $L_0^{x,\mu}\Phi(x,y,\mu)=-b(x,y,\mu)$. Note that such $\Phi$ exists by Proposition \ref{poisson for calculations}.\\

Keeping the same notation as in section 1.2, assuming that the convergence holds, we want to find a function $u$ such that $u^{\ep}(t,m)$ converges to $u(t,\pi^1_*m)$ as $\ep\rightarrow 0$. By repeating the construction of the correctors $u_1,\; u_2$, which, for reference, is sketched in section 5.1, we find that $u_1(t,m)=\int_{\R^{2d}}\Phi(x,y,\mu)\cdot D_mu(t,\pi^1_*m,x)m(dx,dy)$ and, to construct $u_2$, that $u$ will satisfy 
\be \label{app1}
\begin{cases}
\dot{u}(t,\mu)+\L u(t,\mu)=0,& (t,\mu)\in [0,T)\times \Prob,\\
u(T,\mu)=G(\mu),& \mu\in \Prob
\end{cases}
\ee
with 
\begin{align*}
&\L u(t,\mu) = \int_{\R^d} \bigg( \overline{(D_x\Phi b)}+ \overline{(\tau_1\sigma^{\top}:D^2_{xy}\Phi)}\bigg) (x,\mu) \cdot D_{\mu}u(t,\mu,x)\mu(dx)\\
&+ \int_{\R^d}\overline{(\sigma_1^0\sigma_2^{0\top}:D^2_{xy}\Phi)}(x,\mu)\cdot D_{\mu}u(t,\mu,x)\mu(dx)\\
&+ \int_{\R^{2d}} \int_{\R^d} \overline{ D_{\mu}\Phi(x,y,\mu,x')b(x',y',\mu)} \mu(dx')\cdot D_{\mu}u(t,\mu,x)\mu(dx)\\
&+ \int_{\R^d}\int_{\R^d}\overline{\sigma_2^0(x,\mu)\sigma_1^{0\top}(x',\mu):D_yD_{\mu}\Phi(x,y,\mu,x')}\mu(dx')\cdot D_{\mu}u(t,\mu,x)\mu(dx)\\
&+\frac{1}{2}\int_{\R^{2d}}\bigg( \overline{\sigma\sigma^{\top}}+\frac{\overline{b\otimes\Phi}+\overline{(b\otimes\Phi)^{\top}}}{2}\bigg) (x,\mu) :D_xD_{\mu}u(t,\mu,x) \mu(dx)\\
&+\frac{1}{2}\int_{\R^d}  \frac{\overline{D_y\Phi\tau_1\sigma^{\top}}+\overline{(D_y\Phi\tau_1\sigma^{\top})^{\top}}}{2}(x,\mu):D_xD_{\mu}u(t,\mu,x) \mu(dx)\\
&+\frac{1}{2}\int_{\R^d}\bigg( \sigma_1^0\sigma_1^{0\top}+2\sigma_1(\overline{D_y\Phi}\sigma_2^0)^{\top}\bigg)(x,\mu):D_xD_{\mu}u(t,\mu,x)\mu(dx)\\
&+\frac{1}{2}\int_{\R^{2d}}( \sigma_1^0(x,\mu)\sigma_1^{0\top}(x',\mu)+\overline{(D_y\Phi\sigma_2^0)(x,y,\mu)(D_y\Phi\sigma_2^0)^{\top}(x',y',\mu)}):D^2_{\mu\mu}u\; \mu(dx)\mu(dx')\\
&+\int_{\R^{2d}}(\sigma_1^0(x,\mu)\overline{(D_y\Phi\sigma_2^0)^{\top}(x',y',\mu)}):D^2_{\mu\mu}u(t,\mu,x,x')\mu(dx)\mu(dx'),
\end{align*}
where, as earlier, $\overline{h}(x,x')=\overline{h(x,x',y,y')}=\int_{\R^{2d}}h(x,x',y,y')\Pi_{x,x',\mu}(dy,dy')$ with $\Pi_{x,x',\mu}$ being the invariant measure defined after \eqref{L02}. We observe that $u$ satisfies an equation of the form
\begin{multline}\label{equation}
\dot{u}(t,\mu)+\int_{\R^d}(B(x,\mu)\cdot D_{\mu}u(t,\mu,x)+\Sigma_1(x,\mu):D_xD_{\mu}u(t,\mu,x))\mu(dx)\\+\frac{1}{2}\int_{\R^{2d}}\Sigma_0(x,x',\mu):D^2_{\mu\mu}u(t,\mu,x,x')\mu(dx)\mu(dx')=0
\end{multline}
with $B,\;\Sigma_1,\;\Sigma_0$ being regular. To our knowledge, it is not know if, in general, there exists solution $u$ to such a problem and if the solution will be regular as well, unless we can identify \eqref{equation} with a McKean-Vlasov control equation associated with a conditional McKean-Vlasov process. To do that, $\Sigma_0$ has to satisfy $\Sigma_0(x,x',\mu)=\Sigma_2(x,\mu)\Sigma_2^{\top}(x',\mu)$ for some other matrix valued function $\Sigma_2$, for every $x,x'\in \R^d$ and $\mu\in \Prob$. In our case
\begin{align*}
\Sigma_0(x,x',\mu)&=\sigma_1^0(x,\mu)\sigma_1^{0\top}(x',\mu)+\overline{(D_y\Phi\sigma_2^0)(x,y,\mu)(D_y\Phi\sigma_2^0)^{\top}(x',y',\mu)}\\
&+\sigma_1^0(x,\mu)\overline{(D_y\Phi\sigma_2^0)^{\top}}(x',\mu)+\overline{(D_y\Phi\sigma_2^0)}(x,\mu)\sigma_1^{0\top}(x',\mu).
\end{align*}
Even though this is close to being equal to 
$$(\sigma^0_1(x,\mu)+ \overline{D_y\Phi}\sigma_2^0(x,\mu))(\sigma^0_1(x',\mu)+ \overline{D_y\Phi}\sigma_2^0(x',\mu))^{\top},$$
this is not true because 
\be \label{non eq}
\overline{(D_y\Phi\sigma_2^0)(x,y,\mu)(D_y\Phi\sigma_2^0)^{\top}(x',y',\mu)}\neq \overline{D_y\Phi}\sigma_2^0(x,\mu)(\overline{D_y\Phi}\sigma_2^0)^{\top}(x',\mu),
\ee
since $\Pi_{x,x',\mu}$ is not the product measure $\pi_{x,\mu}\otimes \pi_{x',\mu}$. However, if $D_y\Phi$ is independent of $y$ (which is the case in our result), \eqref{non eq} holds as an equality, as we are averaging over a function which is independent of $y,y'$. This justifies our choice of coefficients in \eqref{eq:slow-fastMcKeanVlasov}.\\

\textbf{Acknowledgments.}\\
The author was partially supported by Panagiots E. Souganidis'  National Science Foundation DMS-1266383 and DMS-1600129, Office for Naval Research N000141712095 and Air Force Office for Scientific Research FA9550-18-1-0494 grants. The author wishes to thank Panagiotis E. Souganidis, Ben Seeger and Nikiforos Mimikos-Stamatopoulos for valuable discussions and the Institute of Mathematical and Statistical Innovation (IMSI) for its hospitality during the long program in the spring of 2023.

\bibliographystyle{siam}

\begin{thebibliography}{10}
\bibitem{CarmonaDelarue} Carmona, Ren\'e, and Fran\c{c}ois Delarue. Probabilistic theory of mean field games with applications I. Switzerland: Springer Nature, 2018..

\bibitem{carmdel 2} Carmona, Ren\'e, and Fran\c{c}ois Delarue. Probabilistic theory of mean field games with applications II. Switzerland: Springer Nature, 2018.

\bibitem{Spil} Bezemek, Zachary, and Konstantinos Spiliopoulos. "Rate of homogenization for fully-coupled McKean-Vlasov SDEs." arXiv preprint arXiv:2202.07753 (2022).

\bibitem{Pardoux} Pardoux, \'Etienne, and Yu Veretennikov. "On the Poisson equation and diffusion approximation. I." The Annals of Probability 29.3 (2001): 1061-1085.

\bibitem{Pardoux2} Pardoux, E., and A. Yu Veretennikov. "On Poisson equation and diffusion approximation 2." The Annals of Probability 31.3 (2003): 1166-1192.

\bibitem{crisan} Crisan, Dan, and Eamon McMurray. "Smoothing properties of McKean-Vlasov SDEs." Probability Theory and Related Fields 171 (2018): 97-148.

\bibitem{Hong1} Hong, Wei, Shihu Li, and Xiaobin Sun. "Diffusion Approximation for Multi-Scale McKean-Vlasov SDEs Through Different Methods." arXiv preprint arXiv:2206.01928 (2022).

\bibitem{Li} Li, Yun, Fuke Wu, and Longjie Xie. "Poisson equation on Wasserstein space and diffusion approximations for McKean-Vlasov equation." arXiv preprint arXiv:2203.12796 (2022).

\bibitem{Rockner1} R\"ockner, Michael, Xiaobin Sun, and Yingchao Xie. "Strong convergence order for slow-fast McKean-Vlasov stochastic differential equations." (2021): 547-576.

\bibitem{Lions} Lions, Pierre-Louis. "Cours au college de france." Available at www. college-de-france. fr (2007).

\bibitem{Rockner2} R\"ockner, Michael, and Longjie Xie. "Diffusion approximation for fully coupled stochastic differential equations." (2021): 1205-1236.

\bibitem{Qiao} Qiao, Huijie, and Wanlin Wei. "Efficient filtering for multiscale McKean-Vlasov Stochastic differential equations." arXiv preprint arXiv:2206.05037 (2022).

\bibitem{Pham} Pham, Huy\^en, and Xiaoli Wei. "Dynamic programming for optimal control of stochastic McKean--Vlasov dynamics." SIAM Journal on Control and Optimization 55.2 (2017): 1069-1101.

\bibitem{ran1} Wang, Wei, A. J. Roberts, and Jinqiao Duan. "Large deviations and approximations for slow-fast stochastic reaction-diffusion equations." Journal of Differential Equations 253.12 (2012): 3501-3522.

\bibitem{ran2} Mastny, Ethan A., Eric L. Haseltine, and James B. Rawlings. "Two classes of quasi-steady-state model reductions for stochastic kinetics." The Journal of chemical physics 127.9 (2007): 094106.

\bibitem{ran3} Weinan, E., Di Liu, and Eric Vanden-Eijnden. "Analysis of multiscale methods for stochastic differential equations." Communications on Pure and Applied Mathematics 58.11 (2005): 1544-1585.

\bibitem{ran4} Harvey, E., Kirk, V., Wechselberger, M., and Sneyd, J. (2011). Multiple timescales, mixed mode oscillations and canards in models of intracellular calcium dynamics. Journal of nonlinear science, 21, 639-683.

\bibitem{Cardaliaguet master eq} Cardaliaguet, P., Delarue, F., Lasry, J. M., \& Lions, P. L. (2019). The master equation and the convergence problem in mean field games:(ams-201). Princeton University Press.

\bibitem{associated SDEs} Buckdahn, R., Li, J., Peng, S., \& Rainer, C. (2017). Mean-field stochastic differential equations and associated PDEs.

\bibitem{weak quant} Chassagneux, Jean-Fran\c{c}ois, Lukasz Szpruch, and Alvin Tse. "Weak quantitative propagation of chaos via differential calculus on the space of measures." The Annals of Applied Probability 32.3 (2022): 1929-1969.

\bibitem{Chen} Chen, Peng, Jianya Lu, and Lihu Xu. ``Approximation to stochastic variance reduced gradient Langevin dynamics by stochastic delay differential equations.'' Applied Mathematics \& Optimization 85.2 (2022): 15.

\end{thebibliography}

\end{document}